\documentclass{article}

\usepackage{amsmath, amssymb, amsthm, mathtools, bm}
\usepackage[a4paper, margin=25mm]{geometry}
\usepackage[T1]{fontenc}
\usepackage[shortlabels]{enumitem}
\usepackage{xcolor}
\usepackage{xpatch}

\usepackage[
	sorting=nyt,
	maxbibnames=99,
	backref,
	backrefstyle=none, 
	style=alphabetic,
    maxalphanames=4, 
    minalphanames=1   
]{biblatex}

\usepackage[
    colorlinks,                
    linkcolor=blue!70!black,   
    citecolor=green!70!black,  
    urlcolor=cyan!70!black     
]{hyperref}
\usepackage[capitalise]{cleveref}

\DeclareFieldFormat*{title}{\mkbibemph{#1\isdot}}

\DeclareFieldFormat{url}{url: \href{#1}{\nolinkurl{#1}}}
\DeclareFieldFormat{doi}{doi: \href{https://doi.org/#1}{\nolinkurl{#1}}}
\DeclareFieldFormat{eprint:arxiv}{arXiv: \href{https://arxiv.org/abs/#1}{\nolinkurl{#1}}}
\DeclareFieldFormat{eprint}{arXiv: \href{https://arxiv.org/abs/#1}{\nolinkurl{#1}}}
\AtEveryBibitem{\clearfield{issn}\clearfield{isbn}\clearlist{language}}

\renewbibmacro*{doi+eprint+url}{%
    \iftoggle{bbx:eprint}{%
		\usebibmacro{eprint}%
	}{}
    \iffieldundef{eprint}{%
        \printfield{doi}%
    }{%
    }
	\newunit\newblock%
	\iffieldundef{doi}{%
    \iffieldundef{eprint}{%
		\usebibmacro{url+urldate}%
	}{}}{}%
}

\NewBibliographyString{toappear}
\DefineBibliographyStrings{english}{%
	toappear = {to appear},
}

\renewbibmacro*{in:}{%
	\iffieldundef{pubstate}
	{}
	{\printfield{pubstate}%
		\setunit{\addspace}%
		\clearfield{pubstate}}%
}

\DefineBibliographyStrings{english}{
    backrefpage={cited on p\adddot},
    backrefpages={cited on pp\adddot}
}

\DeclareFieldFormat{backrefparens}{\addperiod\addspace{\scriptsize(#1)}}
\xpatchbibmacro{pageref}{parens}{backrefparens}{}{}

\theoremstyle{plain}
\newtheorem{theorem}{Theorem}[section]
\newtheorem{lemma}[theorem]{Lemma}
\newtheorem{proposition}[theorem]{Proposition}
\newtheorem{corollary}[theorem]{Corollary}
\newtheorem{problem}[theorem]{Problem}

\theoremstyle{definition}
\newtheorem{remark}[theorem]{Remark}

\AddToHook{env/lemma/begin}{\crefalias{theorem}{lemma}}
\AddToHook{env/proposition/begin}{\crefalias{theorem}{proposition}}
\AddToHook{env/corollary/begin}{\crefalias{theorem}{corollary}}
\AddToHook{env/conjecture/begin}{\crefalias{theorem}{conjecture}}

\numberwithin{equation}{section}

\crefname{subsection}{subsection}{subsections}

\setlist[enumerate]{label=(\arabic*)}

\makeatletter
\newcommand{\step}[1]{%
  \par
  \addvspace{\medskipamount}
  \noindent%
  \textit{#1\@addpunct{.}}\enspace\ignorespaces
}
\makeatother

\makeatletter
\NewCommandCopy\latexparagraph\paragraph
\RenewDocumentCommand{\paragraph}{sO{#3}m}{%
  \IfBooleanTF{#1}
    {\latexparagraph*{\maybe@addperiod{#3}}}
    {\latexparagraph[#2]{\maybe@addperiod{#3}}}%
}
\newcommand{\maybe@addperiod}[1]{%
  #1\@addpunct{.}%
}
\makeatother

\DeclareMathOperator{\Ex}{\mathbb{E}}
\DeclareMathOperator{\Ent}{H}
\DeclareMathOperator{\mI}{I} 
\DeclareMathOperator{\supp}{supp} 

\newcommand{\defeq}{\coloneqq}

\let\epsilon\varepsilon

\newcommand{\RR}{\mathbb{R}}
\newcommand{\RRnn}{\mathbb{R}^+_0}
\newcommand{\RRp}{\mathbb{R}^+}
\newcommand{\NN}{\mathbb{N}}

\newcommand{\cB}{\mathcal{B}}
\newcommand{\cE}{\mathcal{E}}
\newcommand{\cL}{\mathcal{L}}
\newcommand{\cN}{\mathcal{N}}
\newcommand{\cP}{\mathcal{P}}
\newcommand{\cQ}{\mathcal{Q}}
\newcommand{\cS}{\mathcal{S}}
\newcommand{\cZ}{\mathcal{Z}}

\newcommand{\bone}{\mathbf{1}}

\newcommand{\ba}{\mathbf{a}}
\newcommand{\bfb}{\mathbf{b}}

\newcommand{\be}{\mathbf{e}}
\newcommand{\bfm}{\mathbf{m}}
\newcommand{\bn}{\mathbf{n}}
\newcommand{\br}{\mathbf{r}}
\newcommand{\bs}{\mathbf{s}}
\newcommand{\bu}{\mathbf{u}}
\newcommand{\bx}{\mathbf{x}}
\newcommand{\by}{\mathbf{y}}
\newcommand{\bz}{\mathbf{z}}

\newcommand{\balpha}{{\bm{\alpha}}}
\newcommand{\bbeta}{{\bm{\beta}}}
\newcommand{\blambda}{{\bm{\lambda}}}
\newcommand{\bmu}{{\bm{\mu}}}
\newcommand{\bfeta}{{\bm{\eta}}}
\newcommand{\bomega}{{\bm{\omega}}}
\newcommand{\bsigma}{{\bm{\sigma}}}
\newcommand{\btau}{{\bm{\tau}}}

\newcommand{\Zi}{\widetilde{Z}} 

\newcommand{\sym}{{\mathrm{sym}}}
\newcommand{\blank}{\makebox[1ex]{\(\cdot\)}}

\NewDocumentCommand{\D}{mm}{D(#1\mathbin{\Vert}#2)} 

\DeclarePairedDelimiter{\abs}{\lvert}{\rvert}
\DeclarePairedDelimiter{\norm}{\lVert}{\rVert}

\newcommand{\Mc}{{\upshape M}} 

\newcommand{\cLor}{Lorentzian} 

\newcommand{\footremember}[2]{%
    \footnote{#2}
    \newcounter{#1}
    \setcounter{#1}{\value{footnote}}%
}
\newcommand{\footrecall}[1]{%
    \footnotemark[\value{#1}]%
}

\begin{document}

\title{Antiferromagnetic models are clique-minimizing}
\author{Joonkyung Lee%
\footremember{Yonsei}{
    Department of Mathematics, Yonsei University, Seoul, South Korea. Research supported by Samsung STF Grant SSTF-BA2201-02 and the National Research Foundation of Korea (NRF) grant MSIT NRF-2022R1C1C1010300. Email: \texttt{\{joonkyunglee, jaehyeonseo\}@yonsei.ac.kr}.
}
\and 
Jaehyeon Seo\footrecall{Yonsei}}
\date{}

\maketitle

\begin{abstract}
An edge-weighted graph \(H\), possibly with loops, is \emph{antiferromagnetic} if its adjacency matrix is entrywise nonnegative and has at most one positive eigenvalue, counted with multiplicity. We show that, for any graph \(G\) with \(d_v\defeq \deg_G(v)\),
\[
    \hom(G,H) \ge \prod_{v\in V(G)} \hom(K_{d_v+1},H)^{\frac{1}{d_v+1}},
\]
whenever \(H\) is antiferromagnetic. In fact, we prove a vertex-inhomogeneous strengthening of this inequality, allowing a different fugacity vector at each vertex of \(G\). This gives a common generalization of the lower-bound inequalities of Sah, Sawhney, Stoner, and Zhao for independent sets, of Csikv\'{a}ri for \(q\)-colorings, and of the authors for semiproper colorings with at most two proper colors. Furthermore, it confirms recent conjectures of the authors and of Davies and LeBlanc. A key ingredient, of independent interest, is a strengthening of the delete-one form of Shearer’s inequality for Lorentzian measures, which provides a new approach to graph homomorphism inequalities.
\end{abstract}

\section{Introduction}

One of the fundamental reasons why complete graphs are important in graph theory is their repeated emergence as extremizers in counting problems on graphs. Throughout, let \(G\) be a finite simple graph, and write \(v(G)\defeq \abs{V(G)}\). For example, if \(i(G)\) denotes the number of independent sets of a \(d\)-regular graph \(G\), Cutler and Radcliffe \cite{cutler2014maximum} proved that
\[
    i(G)^{1/v(G)} \ge i(K_{d+1})^{1/(d+1)}.
\]
Thus, after the natural normalization that makes the quantity invariant under vertex-disjoint unions of copies of the same graph, the complete graph \(K_{d+1}\) minimizes the number of independent sets. Csikv\'{a}ri's theorem gives the analogous statement for the number of proper \(q\)-colorings; see \cite{zhao2017extremal}. Further work in this direction includes results for the independence polynomial and arbitrary degree sequences \cite{sah2019number}, partial results for the antiferromagnetic Ising model on cubic graphs \cite{davies2024occupancy}, and results for semiproper colorings with a small number of proper colors \cite{lee2026lower}. These results may suggest a common principle, but their proofs are tailored to the individual models and, until now, no general criterion explained when complete graphs should be minimizers.
This leads to a fundamental question:
\begin{center}
    \emph{What structural condition on a counting model forces complete graphs to be minimizers?}
\end{center}
We identify a broad class of models for which complete graphs are minimizers, specified by a simple spectral condition.
Let \(I\) be a finite nonempty set of spins. By a \emph{model} on \(I\), we mean a symmetric matrix \(H=(H(\sigma,\tau):\sigma,\tau\in I)\) with nonnegative entries.
Equivalently, a model \(H\) is an edge-weighted graph on~\(I\), possibly with loops.
For a graph \(G\), its weighted homomorphism count into \(H\) is
\[
    \hom(G,H)
    \defeq
    \sum_{\phi\colon V(G)\to I} \prod_{uv\in E(G)} H(\phi(u),\phi(v)).
\]
When the edge weights of \(H\) are \(0\)--\(1\)-valued, this is the usual number of graph homomorphisms from \(G\) to~\(H\). A symmetric entrywise-nonnegative matrix is \emph{antiferromagnetic} if it has at most one positive eigenvalue, counted with multiplicity; a model \(H\) is \emph{antiferromagnetic} if its underlying matrix is antiferromagnetic.
Our main result is the following. For a graph \(G\) and \(v\in V(G)\), write \(d_v\defeq \deg_G(v)\).

\begin{theorem}\label{antiferromagnetic-clique-minimizing}
Let \(H\) be an antiferromagnetic model. For every graph \(G\),
\begin{equation}\label{eq:clique-minimizing}
    \hom(G,H) \ge \prod_{v\in V(G)} \hom(K_{d_v+1},H)^{1/(d_v+1)}.
\end{equation}
\end{theorem}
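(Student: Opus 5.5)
The plan is to follow the Sah–Sawhney–Stoner–Zhao strategy and derive the inequality from a delete-one Shearer-type entropy bound. The key new input will be a strengthened Shearer inequality for Lorentzian measures, as the abstract advertises.

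\textbf{Setup.} Let \(\bsigma=(\sigma_v)_{v\in V(G)}\) be a random homomorphism, drawn with probability proportional to \(\prod_{uv\in E(G)} H(\sigma_u,\sigma_v)\). Then
\[
\log\hom(G,H)=\Ent(\bsigma)+\Ex\sum_{uv\in E(G)}\log H(\sigma_u,\sigma_v).
\]
Equivalently, \(\hom(G,H)\) is a sum over \(\bsigma\) of a product of local factors. Our target is a local decomposition:
\[
\log\hom(G,H)\ \ge\ \sum_v \frac{1}{d_v+1}\log\hom(K_{d_v+1},H).
\]

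\textbf{Step 1: a vertex-inhomogeneous reduction.} We first strengthen the statement to a version with an arbitrary fugacity vector \(\lambda_v\in\RRnn^I\) at each vertex. In this version \(\hom(G,H;\blambda)\) weights each \(\phi\) by \(\prod_v\lambda_v(\phi(v))\). Fugacities at a vertex \(v\) are to be split among the cliques \(K_{d_v+1}\) centred at \(v\) and at its neighbours, via suitable geometric means. The inhomogeneous version is closed under a ``pinning''/induction step: remove a vertex \(v\), sum over its spin, and absorb the resulting factor \(\sum_{\sigma}\lambda_v(\sigma)\prod_{u\sim v}H(\sigma,\sigma_u)\) into the neighbours. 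Seen this way, the problem reduces to an inequality for the neighbourhood of a single vertex.

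\textbf{Step 2: Lorentzian structure.} The key structural fact we need is this. If \(H\) has at most one positive eigenvalue, then the generating polynomial
\[
p(\bx)=\sum_{\phi}\prod_{uv}H(\phi(u),\phi(v))\prod_v x_{v,\phi(v)}
\]
(or an appropriate multiaffine/homogenized version on the clique) is Lorentzian, or at least log-concave on the positive orthant. For a single edge this is exactly the statement that the quadratic form \(\bx^{T}H\by\) has Lorentzian signature. For products of such forms, we would use closure of Lorentzian polynomials under products, and under the relevant specializations and diagonalizations where needed. The resulting measure on spin configurations is then Lorentzian, hence has ultra-log-concavity and negative dependence properties.

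\textbf{Step 3: strengthened delete-one Shearer.} The core step is to prove, for a Lorentzian measure \(\mu\) on \(I^{V}\) (with \(V=\{1,\dots,n\}\)), an inequality strengthening
\[
(n-1)\Ent(\bsigma)\le \sum_i \Ent(\bsigma_{V\setminus i}).
\]
The strengthening should quantify the correction in terms of the clique partition function, so that applying it to the closed neighbourhood \(N[v]\) yields precisely the factor \(\hom(K_{d_v+1},H;\cdot)^{1/(d_v+1)}\), with the equality case being the clique. The plan is then:
\begin{itemize}
\item combine the per-vertex inequalities over all \(v\);
\item use that each edge \(uv\) is counted once from \(u\) and once from \(v\), with weights \(1/(d_u+1)\) and \(1/(d_v+1)\), and compensate by the inhomogeneous fugacities from Step 1;
\item finally optimize over the fugacities, using Gibbs' variational principle to turn entropies back into partition functions, which gives \eqref{eq:clique-minimizing}.
\end{itemize}

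\textbf{Main obstacle.} I expect Step 3 to be the hardest part. Plain Shearer goes the wrong direction for lower bounds, so the strengthening must exploit log-concavity of the Lorentzian generating polynomial. What I would try first is to write the entropy deficit as an integral of second derivatives of \(\log p\) along a path in fugacity space. I would then bound these derivatives using the one-positive-eigenvalue Hessian condition, namely the Cauchy–Schwarz-type reverse inequality \((\bx^{T}Q\by)^2\ge(\bx^{T}Q\bx)(\by^{T}Q\by)\). The aim is to compare with the complete-graph case, where symmetry makes the inequality tight.
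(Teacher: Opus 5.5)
Your Step~1 matches how the paper begins. Pinning a maximum-degree vertex $w$ multiplies the neighbours' fugacities by $\bfm_\sigma$, and induction reduces the theorem to the local inequality \eqref{eq:goal-Phi-fraction}. Beyond that, however, the proposal contains no proof, and Step~2 is false as stated.

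\textbf{Step~2 is false.} In vertex-labelled variables the generating polynomial is neither Lorentzian nor log-concave, already for $G=K_2$. Its Hessian is $\begin{psmallmatrix}0&H\\H&0\end{psmallmatrix}$, whose spectrum is $\pm$ that of $H$. So any negative eigenvalue of $H$ produces a second positive eigenvalue. For $H=K_2$ with spins $a,b$, the polynomial $p=x_{1a}x_{2b}+x_{1b}x_{2a}$ does not even have \Mc-convex support.

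The reverse Cauchy--Schwarz inequality $(\bx^\top H\by)^2\ge(\bx^\top H\bx)(\by^\top H\by)$ says that the \emph{spin-variable} quadratic $Z_2(\bx)=\bx^\top H\bx$ is Lorentzian. It does not say this of the bilinear form $\bx^\top H\by$ in doubled variables, so product closure has nothing to act on.

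In spin variables, $Z_G$ also fails to be Lorentzian for general $G$. Take the hard-core model and $G=K_{1,4}$, and let $i_k$ be the number of independent sets of size $k$. The normalized counts $i_k/\binom{5}{k}$ are $1,1,\tfrac35,\tfrac25,\tfrac15,0$, and $(3/5)^2<2/5$ violates log-concavity.

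What is true is only that the clique polynomial $Z_m$ in spin variables is Lorentzian. Hence the clique Gibbs law on $I^m$ is Lorentzian only in a restricted sense: it is an \emph{exchangeable} law whose count vector has a Lorentzian generating polynomial. Exchangeability is used essentially, for example to identify $\D{\mu}{\nu}$ with the relative entropy of the count laws.

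\textbf{Step~3 has nothing valid to act on.} The marginal of the $G$-Gibbs measure on $N[v]$ is in general neither exchangeable nor Lorentzian. You never state the strengthened inequality. The bookkeeping with weights $1/(d_u+1)$ ``compensated by fugacities'' is not an argument.

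\textbf{The missing idea.} You have not located where the difficulty lies. In \eqref{eq:goal-Phi-fraction}, a neighbour of degree $d$ contributes $\Phi_d(\bfm_\sigma\odot\blambda)/\Phi_{d+1}(\blambda)$, while the clique at $w$ has size $\Delta+1$. You first need to decouple the neighbours, which the paper does by log-convexity of the dual set $\cS_\Delta$. After that, the case $d=\Delta$ is just the tangent-plane inequality for the concave function $\Phi_{\Delta+1}$. The entire content is the degree mismatch $d<\Delta$: one must show that the tangent vector of $\Phi_{d+1}$, raised coordinatewise to the power $\Delta$, lies in $\cS_\Delta$.

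The paper handles this as follows.
\begin{enumerate}
\item It writes $\log M_m(\bs)$ as an optimized pressure $F_m=\sup\{\Ex U_m+\Ent(X)-\sum_j\Ent(X_j)\}$, the supremum taken over exchangeable Lorentzian laws on $I^m$.
\item It proves $mF_{m-1}\ge(m-2)F_m$ by feeding the delete-one marginals of a near-optimizer for $F_m$ into $F_{m-1}$. This is how the ``wrong direction'' of Shearer is avoided: the entropy inequality is used on the dual side.
\item That step uses the energy deletion identity together with $\sum_j\Ent(X_{[m]\setminus j})\ge(m-2)\Ent(X)+\sum_j\Ent(X_j)$. The correction term here is the total correlation, not anything involving clique partition functions.
\item This entropy inequality follows from the contraction $\D{\mu_1}{\nu_1}\le\frac1m\D{\mu}{\nu}$, which comes from concavity of $h_\nu^{1/m}$.
\item Monotonicity then carries $F_{d+1}\le0$ up to $F_{\Delta+1}\le0$.
\end{enumerate}
Your suggested path-integral of second derivatives of $\log p$ identifies neither this target inequality nor the quantity that must be shown to be monotone.
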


Following the terminology of Sah, Sawhney, Stoner, and Zhao \cite{sah2020reverse}, we say that a model \(H\) is \emph{clique-minimizing} if \eqref{eq:clique-minimizing} holds for every graph \(G\). Thus, \Cref{antiferromagnetic-clique-minimizing} says that every antiferromagnetic model is clique-minimizing. In particular, if \(G\) is \(d\)-regular, the conclusion becomes
\[
    \hom(G,H)^{1/v(G)} \ge \hom(K_{d+1},H)^{1/(d+1)},
\]
while for an irregular graph the bound is a product of clique homomorphism counts determined by the individual vertex degrees.

The class of antiferromagnetic models contains the principal examples motivating the problem.  The hard-core model
\(
    K_2^\circ=\begin{psmallmatrix}1&1\\1&0\end{psmallmatrix}
\)
encodes independent sets; the choice \(H=K_q\) encodes proper \(q\)-colorings; and, more generally, \(K_q^{\ell\circ}\), the complete graph on \(q\) vertices with loops on exactly \(\ell\) vertices, encodes semiproper colorings. The antiferromagnetic Ising model with edge activity \(0\le B\le1\) has interaction matrix
\(
    H_B=\begin{psmallmatrix}B&1\\1&B\end{psmallmatrix}
\).
Each of these matrices has at most one positive eigenvalue. Consequently, \Cref{antiferromagnetic-clique-minimizing} recovers the known clique-minimization results for independent sets and proper colorings, extends the latter from regular graphs to arbitrary degree sequences, and establishes the corresponding inequality for all antiferromagnetic Ising models and (semi)proper colorings.  More generally, it settles our conjecture that every antiferromagnetic model is clique-minimizing \cite[Conjecture~1.3]{lee2026lower}.

\subsection{A vertex-inhomogeneous strengthening}

We in fact prove a stronger result in which every vertex of the source graph may have its own fugacity vector. Let \(G\) be a graph and \(H\) be a model on a spin set \(I\). Denote by \(\RRnn\) the set of nonnegative reals. For each \(v\in V(G)\), let \(\blambda^{(v)}=(\lambda^{(v)}_\sigma:\sigma\in I) \in (\RRnn)^I\) be a vector of vertex fugacities, and define the \emph{vertex-inhomogeneous partition function of \(H\)} by
\[
    \Zi_G(\blambda^{(v)}:v\in V(G))
    \defeq
    \sum_{\bsigma\in I^{V(G)}}
    \prod_{uv\in E(G)} H(\sigma_u,\sigma_v)
    \prod_{v\in V(G)} \lambda^{(v)}_{\sigma_v}.
\]
For a single fugacity vector \(\blambda\), define the \emph{(vertex-homogeneous) partition function} by
\[
    Z_G(\blambda)
    \defeq
    \Zi_G(\blambda:v\in V(G)),
\]
and for \(m\ge1\), let \(Z_m\defeq Z_{K_m}\) be a \emph{clique partition function}.
In particular, \(Z_G(\bone)=\hom(G,H)\), where \(\bone\) is the all-ones vector.

\begin{theorem}\label{antiferromagnetic-inhomo-clique-minimizing}
Let \(H\) be an antiferromagnetic model. For every graph \(G\) and every collection of fugacity vectors
\(
    \blambda^{(v)}\in(\RRnn)^I
\),
\begin{equation}\label{eq:inhomo-clique-minimizing}
    \Zi_G(\blambda^{(v)}:v\in V(G))
    \ge
    \prod_{v\in V(G)}
    Z_{d_v+1}(\blambda^{(v)})^{1/(d_v+1)}.
\end{equation}
\end{theorem}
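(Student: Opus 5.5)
We prove \Cref{antiferromagnetic-inhomo-clique-minimizing} by reducing it to a delete-one Shearer-type inequality for Lorentzian measures. The abstract indicates this is the intended route, and it matches the Sah--Sawhney--Stoner--Zhao strategy for independent sets.

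\step{Setup} Let \(\mu\) be the Gibbs measure on \(I^{V(G)}\) with weight proportional to \(\prod_{uv}H(\sigma_u,\sigma_v)\prod_v\lambda^{(v)}_{\sigma_v}\). We may assume \(\Zi_G>0\), and by continuity we may also assume all fugacities are positive. For each vertex \(v\), the quantity \(Z_{d_v+1}(\blambda^{(v)})\) is a sum over colourings of the closed neighbourhood clique \(N[v]\) with common fugacity \(\blambda^{(v)}\). The goal is to compare \(\log \Zi_G\) with \(\sum_v \frac{1}{d_v+1}\log Z_{d_v+1}(\blambda^{(v)})\) through an entropy decomposition.

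\step{Entropy decomposition} By Gibbs' variational principle,
\[
\log \Zi_G = \Ent(\mu)+\Ex_\mu\Bigl[\sum_{uv}\log H(\sigma_u,\sigma_v)+\sum_v\log\lambda^{(v)}_{\sigma_v}\Bigr].
\]
We split the energy among vertices. An edge \(uv\) contributes \(\log H\) once, and we distribute it as a weight \(\frac{1}{d_v+1}\) to the clique at \(v\), treating the neighbourhood as if it were a clique. Each \(\log\lambda^{(v)}\) contributes to the clique at \(v\) with total weight \(1\).

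The standard trick in \cite{sah2019number} proceeds in two stages. First, one proves the inequality for \(G\) replaced by \(G\) with each vertex's closed neighbourhood made into a clique. Second, one uses a lower bound of the form
\[
\Ent(\mu)\ge \sum_v \frac{1}{d_v+1}\,\Ent\bigl(\sigma_{N[v]}\,\text{marginal data}\bigr),
\]
which is a delete-one Shearer inequality in the reverse direction. Reverse Shearer inequalities fail in general, so negative dependence is essential here. We want the statement that for a Lorentzian (strongly log-concave) measure, entropy is super-additive in the delete-one form
\[
\Ent(X_V)\ \ge\ \sum_v \frac{1}{d_v+1}\bigl(\ldots\bigr).
\]
This is the key new ingredient.

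\step{Lorentzian structure} The key structural fact is that antiferromagnetism of \(H\) makes \(\mu\) Lorentzian. Encode \(\sigma_v\in I\) by indicator variables \(x_{v,\sigma}\). The generating polynomial is then a product of edge factors \(\sum_{\sigma,\tau}H(\sigma,\tau)x_{u\sigma}x_{v\tau}\). Each factor is a quadratic form whose Hessian has at most one positive eigenvalue, hence is Lorentzian. Products of Lorentzian polynomials in disjoint variable sets are Lorentzian, and so is multiplication by the multiaffine vertex fugacities. Conditioning and projecting to multiaffine parts preserve the property, so the conditional laws of \(\sigma_{N(v)}\) given the rest are Lorentzian as well.

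\step{Main argument} With this in hand, we do a local comparison in the spirit of \cite{sah2020reverse}. Condition on \(\sigma_{V\setminus N[v]}\). Each neighbour \(w\) of \(v\) then has an effective fugacity determined by its outside neighbours. We need to show that, averaged correctly, the local contribution at \(v\) dominates \(\frac{1}{d_v+1}\log Z_{d_v+1}(\blambda^{(v)})\). In particular, the neighbourhood behaving as a clique is the worst case. This follows from the Lorentzian delete-one inequality: for a Lorentzian measure on \(d+1\) coordinates \(Y_0,\dots,Y_d\),
\[
\Ent(Y)\ \ge\ \frac{1}{d}\sum_i \Ent(Y_{-i})\cdot(\text{strengthening}),
\]
combined with the convexity of \(\log Z_m\) in \(\log\blambda\).

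\step{Main obstacle} I expect the hardest step to be formulating and proving the strengthened delete-one Shearer inequality for Lorentzian measures. It must be strong enough to absorb the inhomogeneous fugacities, and it must be sharp exactly on cliques. I would first try induction on the number of coordinates. In the inductive step, I would use the fact that the Hessian of \(\log\) of a Lorentzian polynomial has at most one positive eigenvalue, which yields a quadratic Cauchy--Schwarz-type inequality among marginal generating functions. I would then integrate this via tilting by fugacities.
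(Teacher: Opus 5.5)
There is a genuine gap, and it lies in the structural claim the whole argument rests on.

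\textbf{The Lorentzian claim fails.} Consider the edge factor $\sum_{\sigma,\tau}H(\sigma,\tau)x_{u\sigma}x_{v\tau}$ in the variables $(x_{u\cdot},x_{v\cdot})$. Its Hessian is $\begin{pmatrix}0&H\\H&0\end{pmatrix}$, whose eigenvalues are $\pm\eta$ for the eigenvalues $\eta$ of $H$. So it has as many positive eigenvalues as $H$ has nonzero ones, and it is Lorentzian only when $H$ has rank at most one. For the hard-core model $K_2^\circ$ (eigenvalues $(1\pm\sqrt5)/2$) it has two positive eigenvalues. Up to rescaling the variables by the fugacities, this edge factor is exactly the vertex-labelled generating polynomial of the Gibbs measure for $G=K_2$, so your claim already fails for a single edge.

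For larger $G$, the Gibbs generating polynomial is not a product of edge factors at all. Edges share vertices, and forcing a vertex to carry one spin on all its incident edges is a diagonal extraction, not a multiaffine projection. Indeed, the paper records the conjecture that Lorentzianity of $Z_G$ for all antiferromagnetic $H$ characterizes cliques among connected $G$. The Lorentzian structure that is actually available is only for \emph{clique} partition functions in the symmetrized encoding, with one variable $x_\sigma$ per spin. There $Z_m$ is the count-generating polynomial of an exchangeable law on $I^m$, and its degree-two Hessian is $2H$ itself (\Cref{H-antiferromagnetic-Z-Lorentzian}).

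\textbf{The main argument has no mechanism.} The assertion that a clique neighbourhood is the worst case is the theorem itself. A variational lower bound on $\log\Zi_G$ would need one of two things, and neither is supplied:
\begin{itemize}
\item a global test law glued from local clique laws, which is not available; or
\item an upper bound on each $\log Z_{d_v+1}(\blambda^{(v)})$ in terms of local statistics of $\mu$.
\end{itemize}
Your displayed inequality $\Ent(Y)\ge\frac1d\sum_i\Ent(Y_{-i})$ is the reverse of Han's inequality. Without a specified correction term, it holds only for product laws. (The lower bounds of Sah--Sawhney--Stoner--Zhao are also proved by inductive vertex deletion, not by a global entropy decomposition.)

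\textbf{The missing idea is how to localize to cliques.} The paper does this by induction on $v(G)$, not by entropy. Deleting a maximum-degree vertex $w$ and conditioning on its spin multiplies each neighbour's fugacity by a row $\bfm_\sigma$ of $H$; this is why the inhomogeneous statement is needed. The induction step then reduces to
\[
    \sum_{\sigma\in I}\lambda^{(w)}_\sigma\prod_{v\in N(w)}\frac{\Phi_{d_v}(\bfm_\sigma\odot\blambda^{(v)})}{\Phi_{d_v+1}(\blambda^{(v)})}\ge\Phi_{\Delta+1}(\blambda^{(w)}),
\]
which is membership of a vector in the dual set $\cS_\Delta$ of $\Phi_{\Delta+1}$. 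The rest of the argument runs as follows.
\begin{itemize}
\item Log-convexity of $\cS_\Delta$ splits this into one statement $\mathsf{P}(d,\Delta)$ per neighbour of degree $d\le\Delta$.
\item $\mathsf{P}(d,d)$ is the tangent-plane inequality for the concave function $\Phi_{d+1}$.
\item Entropy enters only to raise the clique size from $d+1$ to $\Delta+1$. One rewrites $\log M_m(\bs)$ as an optimized pressure $F_m$, a supremum over exchangeable Lorentzian laws on $I^m$.
\item The strengthened Han inequality $\sum_j\Ent(X_{[m]\setminus j})\ge(m-2)\Ent(X)+\sum_j\Ent(X_j)$ is proved via the contraction $\D{\mu_1}{\nu_1}\le\frac1m\D{\mu}{\nu}$. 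Together with the energy deletion identity, it gives $mF_{m-1}\ge(m-2)F_m$.
\item Hence $F_{d'+1}\le0$ propagates to $F_{d'+2}\le0$, which yields $\mathsf{P}(d,\Delta)$.
\end{itemize}
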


Taking \(\blambda^{(v)}=\bone\) for every \(v\) gives \Cref{antiferromagnetic-clique-minimizing}. Moreover, equality in \eqref{eq:inhomo-clique-minimizing} holds whenever \(G\) is a disjoint union of cliques and the fugacity vectors are constant on each connected component, so the inequality is sharp.

We remark that \eqref{eq:inhomo-clique-minimizing} characterizes the class of antiferromagnetic models. Suppose that a model \(H\) satisfies \eqref{eq:inhomo-clique-minimizing} for every graph \(G\) and every collection of fugacity vectors. Let \(G=K_2\) and assign arbitrary fugacity vectors \(\bx,\by\in(\RRnn)^I\) to its two vertices. Since \(H\) is entrywise nonnegative, the resulting inequality is equivalent, after squaring, to
\[
    (\bx^\top H \by)^2 \ge (\bx^\top H \bx) (\by^\top H \by).
\]
It follows from \cite[Theorem~5.3]{choe2004homogeneous} that \(H\) is antiferromagnetic.
By contrast, the clique-minimizing inequality in \eqref{eq:clique-minimizing} does not characterize the class of antiferromagnetic models; in \Cref{sec:concluding-remark}, we will see examples of clique-minimizing models which are not antiferromagnetic.

The vertex-inhomogeneous formulation also provides a variety of examples. It captures multivariate independence polynomials, weighted list-homomorphism and list-coloring problems, and spin systems with vertex-dependent external fields.
It is also the natural setting for one-vertex marginals: logarithmic differentiation with respect to the fugacity of a specified spin at a specified vertex recovers the corresponding marginal probability, a viewpoint central to the occupancy method \cite{davies2017independent,davies2025hardcore}.
Related multivariate partition functions arise in Potts models with external fields \cite{ellis2011tutte} and in zero-freeness and cluster-expansion methods \cite{jenssen2024cluster,scott2005repulsive}.

As a consequence, \Cref{antiferromagnetic-inhomo-clique-minimizing} yields results that do not follow from the homogeneous formulation alone.
In particular, it settles a conjecture from our previous work \cite[Conjecture~4.2]{lee2026lower}.
It also extends the vertex-inhomogeneous results for \(K_2^\circ\) and \(K_3^\circ\) from our earlier work \cite{lee2026lower}, where \(K_q^\circ\defeq K_q^{1\circ}\), which strengthen \cite[Theorem~1.7]{sah2019number} by replacing a common fugacity with arbitrary vertex-dependent fugacities. 
Finally, the theorem confirms a conjecture of Davies and LeBlanc \cite[Conjecture~5(ii)]{davies2024occupancy} on antiferromagnetic Ising models and regular source graphs, which they described as ``somewhat bold''.

We finally remark that separate fugacity parameters also help in our inductive argument on $v(G)$ in the proof of~\Cref{antiferromagnetic-inhomo-clique-minimizing}. If a vertex \(w\) is deleted after conditioning on its spin~\(\sigma\), then the fugacity vector at each neighbor \(v\in N(w)\) is multiplied coordinatewise by the \(\sigma\)-row of \(H\). Hence, a vertex-homogeneous problem immediately becomes vertex-inhomogeneous under the basic recurrence for the partition function.

\subsection{Related graph homomorphism inequalities}

The lower bound in \Cref{antiferromagnetic-clique-minimizing} fits into a broader extremal picture in which cliques and complete bipartite graphs provide complementary extremizers.
The classical extremal theory for independent sets begins with work of Alon and Kahn and culminates, in the regular case, in Zhao's theorem that \(K_{d,d}\) maximizes the normalized hard-core partition function \cite{alon1991independent,kahn2001entropy,zhao2010number}. Galvin and Tetali extended Kahn's entropy method to weighted homomorphism counts on bipartite regular graphs \cite{galvin2004weighted}. To extend the comparison to irregular graphs, Sah, Sawhney, Stoner, and Zhao \cite{sah2020reverse} introduced the following notion: a model \(H\) is \emph{biclique-maximizing} if every graph \(G\) without isolated vertices satisfies
\begin{equation}\label{eq:biclique-maximizing}
    \hom(G,H)
    \le
    \prod_{uv\in E(G)}
    \hom(K_{d_u,d_v},H)^{1/(d_ud_v)}.
\end{equation}
They proved this inequality for several broad classes and conjectured that every antiferromagnetic model is biclique-maximizing \cite[Conjecture~1.16]{sah2020reverse}. This conjecture remains open.

On the other hand, in the same paper~\cite[Theorem~1.14]{sah2020reverse}, it is shown that every \emph{ferromagnetic} model, meaning every symmetric positive-semidefinite model, is \emph{clique-maximizing}:
\[
    \hom(G,H)
    \le
    \prod_{v\in V(G)}
    \hom(K_{d_v+1},H)^{1/(d_v+1)}.
\]
Thus, positive semidefiniteness forces the clique expression to be an upper bound, whereas \Cref{antiferromagnetic-clique-minimizing} shows that antiferromagnetism forces the same expression to be a lower bound. For antiferromagnetic models, the expected global picture is therefore that cliques and bicliques provide the sharp lower and upper bounds, respectively. The present paper establishes the lower half of this picture in full generality.

In our related work with Oh \cite{lee2025counting}, we used Lorentzian polynomials to study the biclique-maximizing conjecture from the perspective of the source graph. The present paper uses the same algebraic connection in a different direction: rather than proving an upper bound by comparison with bicliques, we extract concavity and entropy contraction properties of clique partition functions and use them to prove the lower bound.

\subsection{Methodology}\label{sec:methodology}

We develop a connection between Lorentzian polynomials and entropy contraction, yielding a new approach to graph homomorphism inequalities.

The first ingredient is the Lorentzian property of clique partition functions. Introduced by Br\"{a}nd\'{e}n and Huh \cite{branden2020lorentzian}, Lorentzian polynomials provide a robust framework for log-concavity and negative-dependence phenomena. Their relevance here stems from the fact that, in degree two, the Hessian condition in the definition of Lorentzian polynomials coincides with the antiferromagnetic condition. In particular, \Cref{H-antiferromagnetic-Z-Lorentzian} shows that every clique partition function of an entrywise-positive antiferromagnetic model is Lorentzian, a result that already appeared in our work with Oh \cite{lee2025counting}. Here, the resulting concavity provides the tangent-plane inequalities that serve as the starting point for our entropy argument.

The second ingredient is a strengthening of the delete-one case of Shearer's inequality \cite[Equation~(22)]{chung1986intersection} for exchangeable {\cLor} laws. Here, a law on \(A^m\), where \(A\) is a finite set of letters, is exchangeable if it is invariant under coordinate permutations. The count vector of a word records the multiplicity of each letter, and we call a law {\cLor} if the multivariate probability generating function of its count vector is Lorentzian. At the level of generating polynomials, this notion may be viewed as a multiset analogue of the Lorentzian probability measures introduced in \cite[Section~4.5]{branden2020lorentzian}.

For a random word \(X=(X_1,\ldots,X_m)\) with an exchangeable {\cLor} law, we show in \Cref{entropy-deletion} that
\[
    \sum_{j=1}^m \Ent(X_{[m]\setminus j})
    \ge (m-2)\Ent(X)+\sum_{j=1}^m\Ent(X_j).
\]
Indeed, subtracting the nonnegative ``total correlation'' \(\sum_{j=1}^m\Ent(X_j)-\Ent(X)\) from the lower bound gives the delete-one form of Shearer's inequality.

Our main tool in proving this is a one-coordinate relative entropy contraction, which may be viewed as a multiset analogue of entropic independence due to Anari, Jain, Koehler, Pham, and Vuong \cite{anari2022entropic}.
In \Cref{one-coord-relative-entropy-contraction}, we show that if \(\mu\) and \(\nu\) are exchangeable laws on \(A^m\) and \(\nu\) is {\cLor}, then
\[
    \D{\mu_1}{\nu_1} \le \frac{1}{m}\D{\mu}{\nu},
\]
where \(\mu_1\) and \(\nu_1\) are their one-coordinate marginals. Exchangeability identifies \(\D{\mu}{\nu}\) with the relative entropy between the induced count laws, and the contraction follows from concavity of the \(m\)-th root of a Lorentzian generating polynomial and the variational formula for relative entropy. Although entropy inequalities have long been used in extremal graph theory, this use of relative entropy contraction appears to be new in the study of graph homomorphism inequalities.

\subsection{Proof strategy}\label{sec:proof-strategy}

After reducing to entrywise-positive models, we follow the standard inductive localization strategy used in our previous work \cite{lee2026lower} as well as others~\cite{sah2019number,sah2020reverse}. First, we choose a vertex \(w\) of maximum degree $d_w=\Delta$, expand the vertex-inhomogeneous partition function according to the spin assigned to \(w\), and apply the induction hypothesis to \(G\setminus w\).

For instance, let $H=K_2^\circ$ and write $\blambda^{(v)}=(\lambda_v,\mu_v)$, where $\lambda_v$ and \(\mu_v\) are the fugacities of the non-looped and looped vertices of $H$, respectively. Then $Z_{d}(\blambda^{(v)})=\mu_v^{d} + d\lambda_v\mu_v^{d-1}$. Using a recurrence relation between $\Zi_G$ and $\Zi_{G\setminus w}$ that extends the standard recurrence relation for vertex-homogeneous partition functions
\[
    Z_G(\lambda,\mu)
    = \mu Z_{G\setminus w}(\lambda,\mu) + \lambda \mu^{d_w} Z_{G\setminus(\{w\}\cup N(w))}(\lambda,\mu),
\]
we see that the inductive strategy succeeds if we prove the inequality
\begin{equation}\label{eq:indep-multiaff-lower-bd-goal-2}
	\mu_w\prod_{v\in N(w)} Z_{d_v}(\blambda^{(v)})^{\frac{1}{d_v}}
		+ \lambda_w\prod_{v\in N(w)} Z_{d_v}(0,\mu_v)^{\frac{1}{d_v}}
	\ge Z_{\Delta+1}(\blambda^{(w)})^{\frac{1}{\Delta+1}}
		\prod_{v\in N(w)} Z_{d_v+1}(\blambda^{(v)})^{\frac{1}{d_v+1}}.
\end{equation}
This slightly generalizes Equation (2.2) in~\cite{lee2026lower}, which treats the case $\mu_v=1$ for every $v\in V(G)$. Once we focus on variables $\mu_w$ and $\lambda_w$,~\eqref{eq:indep-multiaff-lower-bd-goal-2} is simply of the form
\begin{align}\label{eq:simplified_tangent}
    A\mu_w + B\lambda_w \geq (\mu_w^{\Delta+1} + (\Delta+1) \lambda_w\mu_w^{\Delta})^{\frac{1}{\Delta+1}},
\end{align}
where the left-hand side is linear and the right-hand side is concave.
The first obstacle is that both \(A\) and~\(B\) are complicated products of rational powers of partition functions \(Z_{d_v}\) and \(Z_{d_v+1}\) over \(v\in N(w)\). In \cite{lee2026lower}, we resolved this issue by looking at the ``dual set'' \(\cS_\Delta\), which characterizes tangent (hyper)planes of \((Z_{\Delta+1})^{\frac{1}{\Delta+1}}\). The log-convexity of \(\cS_\Delta\) allowed us to handle the factors on each \(v\in N(w)\) individually, reducing the inequality to the \emph{local membership problem}, which asks whether a particular vector belongs to \(\cS_\Delta\).

This framework successfully carries over to a more general setting using the theory of Lorentzian polynomials. It is not hard to check that the dual set is again log-convex for any model (\Cref{SDelta-log-convex}). Also, since \(Z_{d}\) is Lorentzian, \((Z_{d})^{1/d}\) is concave and therefore its tangent hyperplane lies above its graph. However, the local membership problem remains challenging. Even in our previous work, \cite[Lemma~3.2]{lee2026lower} illustrates that the problems may be complicated even for the specific case \(H=K_3^\circ\). As a result, in~\cite{lee2026lower}, we verified them by model-specific, low-dimensional calculations only for \(K_2^\circ\) and \(K_3^\circ\). Our main technical contribution, culminating in \Cref{SDelta-membership}, solves this problem for general antiferromagnetic models by replacing the previous ad-hoc arguments with an abstract information-theoretic approach.

To illustrate, fix a neighbor \(v\) of \(w\), and let \(d\le\Delta\) be its degree. In~\Cref{sec:membership}, the desired local membership problem is shown to be equivalent to the statement that an auxiliary quantity, which we call the \emph{optimized pressure}, is nonpositive (\Cref{F-eq-log-M}). Entropy inequalities for the associated Lorentzian laws, combined with the Gibbs variational formula, yield the monotonicity relation for the optimized pressure established in \Cref{abstract-pressure-monotonicity}.
Concavity of \((Z_{d+1})^{1/(d+1)}\) establishes nonpositivity at clique size \(d+1\) (\Cref{antiferromagnetic-Mm-bound}), and monotonicity extends this conclusion to every clique size up to \(\Delta+1\).
This concludes the proof of \Cref{SDelta-membership}.

\paragraph{Organization}
The remainder of the paper is organized as follows. \Cref{sec:preliminaries} fixes notation and collects preliminary lemmas. It also reviews Lorentzian polynomials and relates them to antiferromagnetic models via clique partition functions. \Cref{sec:entropy} develops entropy inequalities for exchangeable {\cLor} laws. In \Cref{sec:abstract-optimized-pressure}, we combine these results to derive a monotonicity property of the optimized pressure. We apply this abstract framework to prove \Cref{antiferromagnetic-inhomo-clique-minimizing} in \Cref{sec:antiferromagnetic}. Finally, \Cref{sec:concluding-remark} concludes with further remarks.

\section{Preliminaries}\label{sec:preliminaries}

Throughout this section, \(J\) denotes a finite index set. For a positive integer \(m\), write \([m]\defeq\{1,\ldots,m\}\). Recall that \(\RRnn\) is the set of nonnegative real numbers; we also write \(\RRp\) for the set of positive reals and \(\NN_0\) for the set of nonnegative integers. For a function \(f\) of variables \((x_\sigma)_{\sigma\in J}\), write \(\partial_\sigma f \defeq \partial_{x_\sigma} f\). We denote by \(\bone\) the all-ones vector of the relevant dimension.

We generally use boldface letters for vectors and corresponding ordinary letters for their entries. For example, if \(A\) is a set and \(\bx\in A^J\), we write \(\bx=(x_i : i\in J)\). We identify \(\bx\) with the function \(J\to A\), \(i\mapsto x_i\). For \(\bx,\by\in\RR^J\), write \(\bx\odot\by \defeq (x_i y_i : i\in J)\) for the coordinatewise product. For \(\balpha\in(\NN_0)^J\), let \(\bx^{\balpha} \defeq \prod_{i\in J} x_i^{\alpha_i}\).

The following is an elementary result for homogeneous functions.

\begin{proposition}[Euler's homogeneous function theorem]\label{euler-homogeneous}
Let \(f\) be a real-valued function on an open cone \(C\subseteq\RR^J\). Suppose \(f\) is differentiable and homogeneous of degree \(k\in\RR\), i.e., \(f(t\bx) = t^k f(\bx)\) for all \(t>0\). Then
\[
    \sum_{i\in J} x_i \partial_i f(\bx) = k f(\bx).
\]
\end{proposition}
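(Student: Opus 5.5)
The plan is to differentiate the homogeneity identity along rays through \(\bx\). Fix \(\bx\in C\). Because \(C\) is an open cone, \(t\bx\in C\) for every \(t>0\). Define \(g\colon(0,\infty)\to\RR\) by \(g(t)\defeq f(t\bx)\). I will compute \(g'(1)\) in two different ways and compare.

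\emph{First computation, via homogeneity.} We have \(g(t)=t^k f(\bx)\) for all \(t>0\). This is an explicit differentiable function of \(t\), and its derivative at \(t=1\) is \(k f(\bx)\).

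\emph{Second computation, via the chain rule.} The map \(t\mapsto t\bx\) is smooth from \((0,\infty)\) into \(C\), and \(f\) is differentiable (in the Fréchet sense) on \(C\). Hence \(g\) is differentiable and
\[
    g'(t)=\sum_{i\in J} x_i\,\partial_i f(t\bx).
\]
At \(t=1\) this gives \(\sum_{i\in J}x_i\,\partial_i f(\bx)\).

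Equating the two expressions for \(g'(1)\) gives the identity. The argument has no real obstacle. The only point needing care is that the chain rule requires \(f\) to be differentiable, not merely to have partial derivatives; this is exactly the hypothesis stated. Openness of \(C\) guarantees that \(t\bx\) stays in \(C\) for \(t\) near \(1\), so the derivative at \(t=1\) makes sense.
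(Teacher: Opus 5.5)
Your proof is correct and is the paper's argument: fix \(\bx\in C\) and differentiate \(f(t\bx)=t^k f(\bx)\) in \(t\) at \(t=1\), using the chain rule on one side and the explicit power of \(t\) on the other. Your remark that full (Fr\'echet) differentiability, rather than mere existence of partial derivatives, is what justifies the chain rule is a point the paper leaves implicit.
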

\begin{proof}
Fix \(\bx\in C\) and differentiate the identity \(f(t\bx)=t^k f(\bx)\) with respect to \(t\) at \(t=1\).
\end{proof}

We will use the following form of the Perron--Frobenius theorem; see \cite[Section~7]{meyer2023matrix}.
Let \(B=(B_{ij})_{i,j\in J}\) be a square matrix with nonnegative entries.
We say \(B\) is \emph{reducible} if there exists a permutation matrix \(P\) such that
\[
    PBP^\top = \begin{pmatrix}
        X & Y \\ 0 & Z
    \end{pmatrix},
\]
where \(X\) and \(Z\) are square matrices of order at least \(1\). If no such permutation matrix exists, \(B\) is \emph{irreducible}. If \(B\) is symmetric, then there is a permutation matrix \(P\) such that
\[
    PBP^\top =
    \begin{pmatrix}
        B_1 & 0 & \cdots & 0\\
        0 & B_2 & \cdots & 0\\
        \vdots & \vdots & \ddots & \vdots\\
        0 & 0 & \cdots & B_r
    \end{pmatrix},
\]
where each diagonal block \(B_i\) is irreducible.
We call the matrices \(B_i\) the \emph{irreducible blocks} of \(B\).

\begin{theorem}[Perron--Frobenius]
\label{Perron--Frobenius}
Let \(B\) be a nonzero irreducible square matrix with nonnegative entries. Then its spectral radius \(\rho(B)\) is a positive eigenvalue of \(B\), and \(B\) has a corresponding entrywise-positive eigenvector.
\end{theorem}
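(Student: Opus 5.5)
The plan is the standard proof of the Perron--Frobenius theorem for irreducible nonnegative matrices, built on the maximality (Collatz--Wielandt) characterization of the spectral radius, with irreducibility entering through positivity of \((I+B)^{n-1}\). Write \(n\defeq\abs{J}\).

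The first step is a combinatorial fact: irreducibility implies that \((I+B)^{n-1}\) is entrywise positive. Consider the directed graph on \(J\) with an arc \(i\to j\) whenever \(B_{ij}>0\). If some vertex \(i\) could not reach some \(j\), then after ordering first the vertices \emph{not} reachable from \(i\) and then those reachable from \(i\), the matrix takes the block triangular form \(\begin{psmallmatrix}X&Y\\0&Z\end{psmallmatrix}\). Both blocks are nonempty, since the reachable set contains \(i\) and the unreachable set contains \(j\). This would make \(B\) reducible. So the graph is strongly connected, every pair is joined by a walk of length at most \(n-1\), and expanding \((I+B)^{n-1}\) binomially shows all its entries are positive. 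The same argument shows every row of \(B\) is nonzero when \(n\ge2\). When \(n=1\), \(B=(b)\) with \(b>0\) since \(B\neq0\), and the theorem is trivial.

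Next, define on the simplex \(\Delta=\{\bx\in(\RRnn)^J:\sum_i x_i=1\}\) the function
\[
r(\bx)\defeq\min_{i:\,x_i>0}\frac{(B\bx)_i}{x_i},
\]
so that \(B\bx\ge r(\bx)\bx\) coordinatewise. Let \(P\defeq(I+B)^{n-1}\). Since \(B\) commutes with \(P\), we have \(BP\bx\ge r(\bx)P\bx\), hence \(r(P\bx/\norm{P\bx}_1)\ge r(\bx)\). The set \(P(\Delta)\), after normalization, is a compact subset of the strictly positive vectors, and \(r\) is continuous there. So \(r\) attains its supremum \(r^*\) over \(\Delta\) at some strictly positive \(\bz\). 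The value \(r^*\) is positive because \(r(\bone/n)=\min_i\sum_j B_{ij}>0\).

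The key step is to show \(B\bz=r^*\bz\). Suppose instead that \(\by\defeq B\bz-r^*\bz\) is nonnegative but nonzero. Then \(P\by>0\), so \(B(P\bz)>r^*P\bz\) strictly in every coordinate. Consequently \(r(P\bz/\norm{P\bz}_1)>r^*\), contradicting maximality. Hence \(\bz\) is an entrywise-positive eigenvector with positive eigenvalue \(r^*\).

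Finally, we show \(r^*=\rho(B)\). Let \(B\bu=\lambda\bu\) with \(\bu\) complex. Taking moduli gives \(B\abs{\bu}\ge\abs{\lambda}\abs{\bu}\) coordinatewise, so \(\abs{\lambda}\le r(\abs{\bu}/\norm{\bu}_1)\le r^*\). Thus \(r^*\ge\rho(B)\), and since \(r^*\) is itself an eigenvalue, \(r^*=\rho(B)\). In the paper's application \(B\) is symmetric, so one could alternatively argue via the Rayleigh quotient: the maximizer \(\bu\) of \(\bx^\top B\bx\) on the unit sphere can be replaced by \(\abs{\bu}\), and positivity of \(P\abs{\bu}\) forces strict positivity. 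The step I expect to need the most care is the maximality argument, specifically guaranteeing that the maximizer is strictly positive, which is exactly where the positivity of \((I+B)^{n-1}\) is used. The statement is classical, so citing \cite{meyer2023matrix} would also suffice.
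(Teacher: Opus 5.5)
Your proof is correct and complete. It is Wielandt's argument via the Collatz--Wielandt function \(r\), with irreducibility entering only through positivity of \((I+B)^{n-1}\). Each step checks out:
\begin{itemize}
\item strong connectivity follows from the paper's block-triangular definition of reducibility, via the reachable/unreachable ordering;
\item \(\sup_{\Delta} r\) is attained at a strictly positive vector, by compactness of the normalized image of \(\Delta\) under \(P\);
\item the strict-improvement contradiction forces \(B\bz=r^*\bz\);
\item \(r^*=\rho(B)\) follows from \(B\abs{\bu}\ge\abs{\lambda}\abs{\bu}\).
\end{itemize}

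The paper gives no proof of this statement; it quotes the theorem from \cite{meyer2023matrix}. So there is no competing route to compare, and your argument simply makes that citation self-contained.

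The paper only applies the theorem to symmetric matrices: in \Cref{antiferromagnetic-irreducible}, and to obtain the positive eigenvector \(\br\) used in the perturbation \(H+\epsilon\br\br^\top\) in \Cref{sec:antiferromagnetic}. For that use, the Rayleigh-quotient route you sketch at the end suffices and is shorter. Its ingredients are:
\begin{itemize}
\item \(\abs{\bu}^\top B\abs{\bu}\ge\bu^\top B\bu\), so \(\abs{\bu}\) is again a top eigenvector;
\item the same comparison gives \(\lambda_{\max}\ge\abs{\lambda_{\min}}\), hence \(\lambda_{\max}=\rho(B)\);
\item \(P\abs{\bu}=(1+\lambda_{\max})^{n-1}\abs{\bu}\) is entrywise positive, which forces \(\abs{\bu}>0\).
\end{itemize}
Your general argument buys the full nonsymmetric statement, which the paper does not need.
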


\begin{corollary}\label{antiferromagnetic-irreducible}
Let \(B\) be an antiferromagnetic matrix with no zero rows. Then \(B\) is irreducible.
\end{corollary}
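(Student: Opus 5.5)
We argue by contradiction, using the block decomposition described above together with the Perron--Frobenius theorem. Suppose \(B\) is reducible. Since \(B\) is symmetric, after a simultaneous permutation of rows and columns (which preserves the spectrum) we may write \(B\) as a block-diagonal matrix with irreducible blocks \(B_1,\ldots,B_r\), where \(r\ge 2\). The spectrum of \(B\) is the multiset union of the spectra of the blocks, so it suffices to show that each block contributes at least one positive eigenvalue. That would give \(B\) at least \(r\ge2\) positive eigenvalues counted with multiplicity, contradicting antiferromagnetism.

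Fix a block \(B_i\). It is a square, symmetric, entrywise-nonnegative and irreducible matrix. It is nonzero: every row of \(B_i\) is the restriction of a row of \(B\) to the columns of its block, and the off-diagonal blocks of \(PBP^\top\) vanish, so a zero row of \(B_i\) would be a zero row of \(B\), which is excluded. (A \(1\times1\) zero block is exactly such a zero row.) By \Cref{Perron--Frobenius}, the spectral radius \(\rho(B_i)\) is an eigenvalue of \(B_i\). Since \(B_i\) is symmetric and nonzero, it has a nonzero eigenvalue, so \(\rho(B_i)>0\). Thus each block has a positive eigenvalue, which completes the argument.

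The only point needing care is the claim that no block is the zero matrix. This is where the hypothesis of no zero rows enters, and without it the statement fails, as the matrix \(\operatorname{diag}(1,0)\) shows. Note also that irreducibility of the blocks is never actually used. It is enough that a nonzero symmetric nonnegative matrix has positive spectral radius realized as an eigenvalue, since its trace of \(B_i^2\) is positive and Perron--Frobenius then applies to the nonnegative matrix.
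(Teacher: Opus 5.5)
Your proof is correct and follows the paper's argument. Symmetry makes the irreducible-block decomposition block diagonal, the no-zero-rows hypothesis makes every block (including any $1\times 1$ block) nonzero, and Perron--Frobenius then gives each block a positive eigenvalue, so antiferromagnetism forces a single block; your trace argument for $\rho(B_i)>0$ is a harmless extra justification of the same step.
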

\begin{proof}
Since \(B\) is symmetric, the decomposition above is block diagonal. Because \(B\) has no zero rows, every irreducible block of \(B\) is nonzero. By \Cref{Perron--Frobenius}, each irreducible block of \(B\) has at least one positive eigenvalue, which is also an eigenvalue of \(B\). Since \(B\) is antiferromagnetic, it has only one irreducible block and is therefore irreducible.
\end{proof}

\medskip

We recall the recursive definition of Lorentzian polynomials from \cite{branden2020lorentzian}. Let \(I\) be a finite nonempty index set. For a real polynomial \(f\) in the variables \(x_\sigma\), \(\sigma\in I\), its \emph{Hessian} \(\nabla^2 f\) is the matrix whose rows and columns are indexed by \(I\) and whose \((\sigma,\tau)\)-entry is \(\partial_\sigma\partial_\tau f\). The \emph{support} of \(f\) is the set of \(\balpha\in (\NN_0)^I\) such that \(\bx^{\balpha}\) appears in \(f\) with nonzero coefficient. A set \(S\subseteq (\NN_0)^I\) is \emph{\Mc-convex} if, for every \(\balpha,\bbeta\in S\), whenever \(\alpha_\sigma>\beta_\sigma\) for some \(\sigma\in I\), there is \(\tau\in I\) such that
\[
    \alpha_\tau<\beta_\tau,
    \qquad
    \balpha-\mathbf{e}_\sigma + \mathbf{e}_\tau \in S,
    \qquad
    \bbeta-\mathbf{e}_\tau + \mathbf{e}_\sigma \in S,
\]
where \(\mathbf{e}_\sigma\) is the standard unit vector whose \(\sigma\)-coordinate is \(1\) and whose other coordinates are \(0\).
A polynomial \(f\in \RRnn[x_\sigma : \sigma\in I]\) of degree \(d\) is \emph{Lorentzian} if either \(f\) is a constant or \(d\ge1\) and the following conditions hold:
\begin{enumerate}
\item \(f\) is homogeneous and has \Mc-convex support;
\item if \(d=2\), \(\nabla^2 f\) is antiferromagnetic;
\item if \(d\ge3\), for each \(\sigma\in I\), \(\partial_\sigma f\) is either zero or Lorentzian.
\end{enumerate}

\begin{proposition}\label{Lorentzian-implies-root-concavity}
If \(f\) is a Lorentzian polynomial of degree \(d\ge1\), then \(f^{1/d}\) is concave on the positive orthant.
\end{proposition}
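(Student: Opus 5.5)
The plan is to prove that every Lorentzian polynomial \(f\) of degree \(d\ge1\) is log-concave on the positive orthant. Since \(f\) is homogeneous, log-concavity upgrades to concavity of \(f^{1/d}\).

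\emph{Step 1 (reduction).} For \(d=1\), \(f\) is linear with nonnegative coefficients, so there is nothing to prove; assume \(d\ge2\). Concavity of \(g\defeq f^{1/d}\) on the open orthant is equivalent to \(\nabla^2 g(\bx)\preceq 0\) at every point \(\bx\) with \(f(\bx)>0\). A direct computation gives
\[
    \nabla^2 g = \tfrac1d f^{1/d-2}\Bigl(f\,\nabla^2 f-\tfrac{d-1}{d}\nabla f\,\nabla f^\top\Bigr).
\]
So it suffices to show that \(Q(\bx)\defeq f\nabla^2 f-\frac{d-1}{d}\nabla f\nabla f^\top\) is negative semidefinite. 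If \(f\equiv0\) on the orthant, then \(f=0\) and the claim is trivial; otherwise \(f>0\) on the open orthant because its coefficients are nonnegative.

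\emph{Step 2 (key claim: \(\nabla^2 f(\bx)\) has at most one positive eigenvalue for every \(\bx\in(\RRp)^I\)).} I would prove this by induction on \(d\). For \(d=2\), \(\nabla^2 f\) is constant and antiferromagnetic by definition. For \(d\ge3\), Euler's theorem (\Cref{euler-homogeneous}) applied to each \(\partial_\sigma\partial_\tau f\), which is homogeneous of degree \(d-2\), gives
\[
    \nabla^2 f(\bx)=\frac1{d-2}\sum_{\rho}x_\rho\nabla^2(\partial_\rho f)(\bx).
\]
By induction each \(\nabla^2(\partial_\rho f)(\bx)\) has at most one positive eigenvalue, but a positive combination of such matrices need not. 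This is the main obstacle.

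The standard way around it, following Br\"and\'en--Huh, is to repeat the Euler reduction \(d-2\) times. This expresses \(\nabla^2 f(\bx)\) as a nonnegative combination of the constant matrices \(\nabla^2\partial^{\balpha}f\) with \(\abs{\balpha}=d-2\), each of which is antiferromagnetic by the recursive definition. One then shows that all these quadratic-form Hessians share a common ``positive direction'' compatible with the cone structure. Concretely, I would use the characterization of Br\"and\'en and Huh that an \(\Mc\)-convex nonnegative homogeneous polynomial is Lorentzian iff it is a limit of stable polynomials, or iff \(\partial^{\balpha}f\) is log-concave for all \(\balpha\). I would invoke their theorem that Lorentzian polynomials are strongly log-concave (Br\"and\'en--Huh, Theorem 2.30), which gives exactly that \(\nabla^2 f(\bx)\) has exactly one positive eigenvalue on the orthant. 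If I had to prove it from scratch, I would argue by a connectivity/continuity argument: the eigenvalue signature cannot change along the orthant unless \(\det\nabla^2 f\) vanishes, which is controlled using \(\Mc\)-convexity. This is the hard step.

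\emph{Step 3 (Cauchy interlacing).} Given Step 2, fix \(\bx\) and let \(M=\nabla^2 f(\bx)\). By Euler's theorem, \(M\bx=(d-1)\nabla f\) and \(\bx^\top M\bx=d(d-1)f>0\). Since \(M\) has exactly one positive eigenvalue and \(\bx^\top M\bx>0\), the reverse Cauchy--Schwarz inequality for such forms gives, for every vector \(\by\),
\[
    (\by^\top M\bx)^2\ge(\by^\top M\by)(\bx^\top M\bx).
\]
Substituting, this reads \((d-1)^2(\by^\top\nabla f)^2\ge d(d-1)f\,\by^\top M\by\), i.e. \(\by^\top Q\by\le0\). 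Hence \(Q\preceq0\), so \(f^{1/d}\) is concave on the positive orthant.
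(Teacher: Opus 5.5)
Your argument is correct and follows the paper's route: the paper's proof just cites Theorem~2.30 and Proposition~2.33 of Br\"and\'en--Huh, and your Steps~1 and~3 make explicit the homogeneity upgrade from the one-positive-eigenvalue (log-concavity) property to concavity of \(f^{1/d}\), using Euler's identity \(\nabla^2 f(\bx)\,\bx=(d-1)\nabla f(\bx)\) and the reverse Cauchy--Schwarz inequality. Two small points: drop the aside that Lorentzian polynomials are exactly the limits of stable polynomials, which is false because homogeneous stable polynomials with nonnegative coefficients form a closed, strictly smaller class; and spell out the one-line passage from log-concavity to ``at most one positive eigenvalue,'' namely that \(f\nabla^2 f\preceq\nabla f\,\nabla f^\top\) wherever \(f>0\), together with Weyl's inequality.
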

\begin{proof}
For \(d=1\), this is immediate because \(f\) is linear. For \(d\ge2\), this follows from Theorem~2.30 and Proposition~2.33 in \cite{branden2020lorentzian}.
\end{proof}

\begin{proposition}\label{Lorentzian-closure}
Let $f(\bx)$ be an $n$-variable Lorentzian polynomial of degree $d$.
Then the following polynomials are also Lorentzian:
\begin{enumerate}
    \item $cf(\bx)$ for a positive scalar $c$;
    \item $f(\ba\odot\bx)$ for any $\ba\in(\RRp)^n$;
    \item $D_{\bfb} f(\bx)\defeq \sum_{i=1}^n b_i\partial_i f(\bx)$ for any $\bfb\in(\RRnn)^n$.
\end{enumerate}
\end{proposition}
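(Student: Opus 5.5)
We will prove all three items by induction on the degree \(d\), using the recursive definition directly, and fall back on the structural results of \cite{branden2020lorentzian} for item (3). Constants, and the case \(d=1\) where Lorentzian means a linear form with nonnegative coefficients, are immediate for all three items. In every item the new polynomial has nonnegative coefficients and is homogeneous, of degree \(d\) in (1) and (2) and of degree \(d-1\) in (3).

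\emph{Item (1).} The support is unchanged. For \(d=2\) the Hessian is multiplied by \(c>0\), which preserves nonnegativity and the number of positive eigenvalues. For \(d\ge3\) we have \(\partial_\sigma(cf)=c\,\partial_\sigma f\), so the induction hypothesis applies.

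\emph{Item (2).} Put \(g(\bx)=f(\ba\odot\bx)\). Since every \(a_i>0\), each coefficient is multiplied by a positive number, so the support and hence \Mc-convexity are unchanged. For \(d=2\) we have \(\nabla^2 g = D_{\ba}\,(\nabla^2 f)\,D_{\ba}\) with \(D_{\ba}=\mathrm{diag}(\ba)\). This matrix is entrywise nonnegative and congruent to \(\nabla^2 f\). By Sylvester's law of inertia it has the same number of positive eigenvalues, so it is antiferromagnetic. For \(d\ge3\) we have \(\partial_i g(\bx)=a_i\,(\partial_i f)(\ba\odot\bx)\). This is zero exactly when \(\partial_i f\) is zero; otherwise it is Lorentzian by the induction hypothesis for (2) followed by (1).

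\emph{Item (3).} We interpret the claim as ``\(D_{\bfb}f\) is zero or Lorentzian'', as in the recursive definition. The naive induction fails. We do have \(\partial_i D_{\bfb}f = D_{\bfb}\partial_i f\), so the recursive condition reduces to lower degree. But in the base case (\(f\) cubic) the Hessian is \(\nabla^2 D_{\bfb}f=\sum_k b_k\nabla^2\partial_k f\), a nonnegative combination of antiferromagnetic matrices. Such a sum need not have at most one positive eigenvalue in general, and the support of \(D_{\bfb}f\) also needs an \Mc-convexity argument. This is the main obstacle.

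The plan is to route around it through two facts from \cite{branden2020lorentzian}:
\begin{itemize}
\item \(f(A\by)\) is Lorentzian for any Lorentzian \(f\) and any entrywise-nonnegative matrix \(A\) (their Theorem~2.10);
\item the set of Lorentzian polynomials of fixed degree is closed in coefficient space.
\end{itemize}
Introduce a new variable \(t\) and set \(F(\bx,t)\defeq f(\bx+t\bfb)\). This is a nonnegative linear substitution, so \(F\) is Lorentzian in \(n+1\) variables. Then \(\partial_t F\) is zero or Lorentzian by the definition of Lorentzian polynomials (for \(d\ge3\)), by the degree-one case (for \(d=2\)), or because it is a constant (for \(d=1\)). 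Moreover \(\partial_tF(\bx,0)=D_{\bfb}f(\bx)\).

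It remains to see that setting \(t=0\) preserves the Lorentzian property. We write \(\partial_tF(\bx,0)\) as a limit: it equals \(\lim_{\epsilon\to0^+}\partial_tF(\bx,\epsilon t)\), and each polynomial \(\partial_tF(\bx,\epsilon t)\) is Lorentzian by item (2) applied with scaling \(\epsilon\) on the \(t\)-coordinate. Closedness of the Lorentzian class then gives the result. Alternatively, one can cite directly the corollary in \cite{branden2020lorentzian} that \(D_{\bfb}\) with \(\bfb\ge0\) preserves Lorentzian polynomials.
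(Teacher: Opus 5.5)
Your proof is correct. It differs from the paper mainly in how much is proved by hand. The paper's proof is a pure citation: (1) is immediate from the definition, and (2) and (3) are taken from Theorem~2.10 and Corollary~2.11 of \cite{branden2020lorentzian}. You instead prove (2) directly from the recursive definition by induction on the degree. This works because a positive diagonal scaling multiplies every coefficient by a positive number, so the support is unchanged. It also acts on a quadratic Hessian by the congruence \(\mathrm{diag}(\ba)\,\nabla^2 f\,\mathrm{diag}(\ba)\), so Sylvester's law of inertia preserves the number of positive eigenvalues. That is more elementary than invoking Theorem~2.10, whose general case (arbitrary entrywise-nonnegative substitutions, which can change the support) is substantially harder.

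For (3) you correctly diagnose why the naive induction stalls: for a Lorentzian cubic \(f\) one would need \(\nabla^2 f(\bfb)=\sum_k b_k\nabla^2\partial_k f\) to have at most one positive eigenvalue. You then deduce the claim from Theorem~2.10 via \(F(\bx,t)=f(\bx+t\bfb)\), which is the standard way to get closure under nonnegative directional derivatives from closure under nonnegative substitutions. So for this item you rely on the same source as the paper, and the gain in self-containment is confined to (2).

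The limiting step in (3) can be dropped. Setting \(t=0\) is itself an entrywise-nonnegative linear substitution: \(P(\bx,0)=P(B\bx)\), where \(B\) is the \((n+1)\times n\) matrix obtained by appending a zero row to the identity. Applying Theorem~2.10 to \(P=\partial_t F\) then shows directly that \(D_{\bfb}f\) is Lorentzian in the \(n\) variables \(\bx\). This also handles the passage from \(n+1\) to \(n\) variables, which your limit argument leaves implicit. That passage is harmless, since the support lies in the face \(\{\alpha_t=0\}\) and the Hessian only gains a zero row and column. The substitution route also avoids closedness of the Lorentzian class. Closedness is not evident from the recursive definition used in the paper, since \Mc-convexity of supports is not obviously stable under limits. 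It is available only through the equivalence with Br\"and\'en--Huh's definition via limits of strictly Lorentzian polynomials.
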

\begin{proof}
The first assertion is immediate from the definition, and the remaining assertions follow from Theorem~2.10 and Corollary~2.11 in \cite{branden2020lorentzian}.
\end{proof}

We need an auxiliary lemma on derivatives of clique partition functions, which is useful in connecting the theory of Lorentzian polynomials to our setting. 

\begin{lemma}\label{partition-function-derivative}
Let \(H\) be any model on a spin set \(I\) and \(Z_d\) be its clique partition function.
For \(d\ge1\), \(\bx\in(\RRp)^I\), and \(\tau\in I\),
\begin{equation}\label{eq:partition-function-derivative}
    \partial_\tau Z_{d+1}(\bx) = (d+1) Z_d(\bfm_\tau\odot\bx),
\end{equation}
where \(\bfm_\tau\) is the \(\tau\)-row of \(H\).
\end{lemma}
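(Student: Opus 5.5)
The plan is to expand \(Z_{d+1}\) directly and differentiate term by term. By definition,
\[
    Z_{d+1}(\bx)=\sum_{\bsigma\in I^{[d+1]}} \prod_{1\le i<j\le d+1} H(\sigma_i,\sigma_j)\prod_{i=1}^{d+1} x_{\sigma_i}.
\]
This is a homogeneous polynomial of degree \(d+1\), and the variable \(x_\tau\) enters each monomial through the factors \(x_{\sigma_i}\) with \(\sigma_i=\tau\). The product rule therefore splits \(\partial_\tau Z_{d+1}(\bx)\) into a sum over the positions \(i\in[d+1]\) at which \(\sigma_i=\tau\). For fixed \(i\), the summand is the sum over all \(\bsigma\) with \(\sigma_i=\tau\) of \(\prod_{j<k}H(\sigma_j,\sigma_k)\prod_{j\ne i}x_{\sigma_j}\). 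Note that the product rule counts a monomial with repeated \(\tau\) once for each position carrying \(\tau\), which is exactly how the derivative of \(x_\tau^k\) produces the factor \(k\).

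Next, I will evaluate a single position, say \(i=d+1\), and show it equals \(Z_d(\bfm_\tau\odot\bx)\). Fix \(\sigma_{d+1}=\tau\). The edge weights then split into the edges inside \(K_d\) on \([d]\) and the edges \(\{j,d+1\}\), which contribute \(\prod_{j\le d}H(\sigma_j,\tau)\). Since \(H(\sigma_j,\tau)=(\bfm_\tau)_{\sigma_j}\) by symmetry of \(H\), these factors combine with \(x_{\sigma_j}\) to give \((\bfm_\tau\odot\bx)_{\sigma_j}\). The sum over \((\sigma_1,\dots,\sigma_d)\) is then exactly \(Z_d(\bfm_\tau\odot\bx)\).

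Finally, the clique \(K_{d+1}\) is invariant under permutations of its vertices, so relabelling shows that every position \(i\) contributes the same amount. Summing over the \(d+1\) positions gives the factor \(d+1\), which proves \eqref{eq:partition-function-derivative}.

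No step is a real obstacle. The only point needing care is the bookkeeping of repeated spins, and the position-wise product rule handles it. The hypothesis \(\bx\in(\RRp)^I\) is not needed, because the identity is polynomial.
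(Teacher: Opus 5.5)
Your proof is correct and follows the paper's argument essentially verbatim. The paper also writes \(\partial_\tau Z_{d+1}\) as a sum over the vertex \(w\in[d+1]\) whose spin is fixed to \(\tau\), uses the symmetry of \(K_{d+1}\) to obtain the factor \(d+1\), and absorbs the edges to \(w\) into the fugacities via \((\bfm_\tau\odot\bx)_{\sigma_v}=H(\tau,\sigma_v)\,x_{\sigma_v}\); your remark that positivity of \(\bx\) is unnecessary, since the identity is polynomial, is also right.
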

\begin{proof}
Applying \(\partial_\tau\) to \(Z_{d+1}\) amounts to choosing a vertex in \(K_{d+1}\) and fixing its spin to be \(\tau\). Identifying \(V(K_{d+1})\) with \([d+1]\), we obtain
\begin{align*}
    \partial_\tau Z_{d+1}(\bx)
    &= \sum_{w\in [d+1]}
    \sum_{\substack{\bsigma\in I^{[d+1]}\\ \sigma_w=\tau}}
    \prod_{uv\in \binom{[d+1]}{2}} H(\sigma_u, \sigma_v)
    \prod_{v\in [d+1]\setminus \{w\}} x_{\sigma_v}
    \\&= (d+1) \sum_{\bsigma\in I^{[d]}}
    \prod_{uv\in \binom{[d]}{2}} H(\sigma_u,\sigma_v)
    \prod_{v\in [d]} ( H(\tau,\sigma_v) \, x_{\sigma_v} )
    \\&= (d+1) \, Z_{d}(\bfm_\tau\odot\bx).
    \qedhere
\end{align*}
\end{proof}

The following lemma is a key statement that allows us to use the theory of Lorentzian polynomials.

\begin{lemma}\label{H-antiferromagnetic-Z-Lorentzian}
Let \(H\) be an entrywise-positive antiferromagnetic model. For \(d\ge1\), the clique partition function \(Z_d\) of \(H\) is Lorentzian. Consequently, \(Z_d^{1/d}\) is concave on the positive orthant.
\end{lemma}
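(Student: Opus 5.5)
Induction on \(d\), using the recursive definition of Lorentzian polynomials and \Cref{partition-function-derivative}; the concavity statement then follows from \Cref{Lorentzian-implies-root-concavity}.

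\emph{Base cases.} For \(d=1\), \(Z_1(\bx)=\sum_\sigma x_\sigma\) is linear with nonnegative coefficients. Its support is \(\{\be_\sigma:\sigma\in I\}\), which is trivially \Mc-convex, so \(Z_1\) is Lorentzian. For \(d=2\), \(Z_2(\bx)=\bx^\top H\bx\) is homogeneous of degree \(2\) with Hessian \(\nabla^2 Z_2 = 2H\). This is antiferromagnetic because scaling by \(2\) preserves the signs of eigenvalues and entrywise nonnegativity. Since \(H\) is entrywise positive, the support of \(Z_2\) is the set of all \(\balpha\in(\NN_0)^I\) with \(\abs{\balpha}=2\). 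The set of all lattice points of a fixed degree is clearly \Mc-convex: for \(\balpha\ne\bbeta\) of equal degree with \(\alpha_\sigma>\beta_\sigma\), some \(\tau\) has \(\alpha_\tau<\beta_\tau\), and both exchanged vectors again have the same degree.

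\emph{Inductive step, \(d+1\ge3\).} \(Z_{d+1}\) is homogeneous of degree \(d+1\) with nonnegative coefficients. Entrywise positivity of \(H\) means every monomial of degree \(d+1\) occurs: put spins on the vertices according to \(\balpha\), and the resulting term is positive. So the support is again the full degree-\((d+1)\) slice, which is \Mc-convex. For each \(\tau\in I\), \Cref{partition-function-derivative} gives
\[
    \partial_\tau Z_{d+1}(\bx)=(d+1)Z_d(\bfm_\tau\odot\bx).
\]
This identity is stated for \(\bx\) in the positive orthant, but both sides are polynomials, so it holds identically. The row \(\bfm_\tau\) lies in \((\RRp)^I\) by entrywise positivity. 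By the induction hypothesis \(Z_d\) is Lorentzian, so \Cref{Lorentzian-closure}(2) shows \(Z_d(\bfm_\tau\odot\bx)\) is Lorentzian, and \Cref{Lorentzian-closure}(1) handles the factor \(d+1\). Hence every \(\partial_\tau Z_{d+1}\) is Lorentzian and nonzero, and \(Z_{d+1}\) is Lorentzian by definition.

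The only delicate point is the \Mc-convexity of the support. I expect it to be painless here precisely because entrywise positivity forces full support. This is the reason the lemma is stated for positive \(H\), with general models presumably handled later by approximation. Once Lorentzianity is established, \Cref{Lorentzian-implies-root-concavity} yields concavity of \(Z_d^{1/d}\) on the positive orthant.
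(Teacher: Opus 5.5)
Your proof is correct and follows essentially the same route as the paper's: induction on \(d\), \Mc-convexity of the full degree slice from entrywise positivity, the Hessian \(2H\) for \(d=2\), and \Cref{partition-function-derivative} combined with \Cref{Lorentzian-closure} for the inductive step, with concavity then coming from \Cref{Lorentzian-implies-root-concavity}.
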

\begin{proof}
Let \(I\) be the spin set of \(H\). We prove that \(Z_d\) is Lorentzian by induction on \(d\); the final assertion then follows from \Cref{Lorentzian-implies-root-concavity}.
Clearly, \(Z_d\) is homogeneous of degree \(d\). Since \(H\) is entrywise positive, the support of \(Z_d\) is the full degree-\(d\) simplex \(\{\balpha\in (\NN_0)^I : \sum_{\sigma\in I} \alpha_\sigma = d\}\), which is readily seen to be \Mc-convex.
When \(d=1\), homogeneity and the \Mc-convexity of the support show that \(Z_d\) is Lorentzian; when \(d=2\), \(\nabla^2 Z_d = 2H\) is antiferromagnetic, so \(Z_d\) is Lorentzian.

Now assume \(d\ge3\). Fix \(\tau\in I\).
By \eqref{eq:partition-function-derivative} with \(d\) replaced by \(d-1\), \(\partial_\tau Z_d(\bx) = d Z_{d-1}(\bfm_\tau\odot\bx)\).
Since \(Z_{d-1}\) is Lorentzian by the inductive hypothesis and \(\bfm_\tau\) is entrywise positive, \Cref{Lorentzian-closure} shows that \(\partial_\tau Z_d\) is Lorentzian. Since this holds for every \(\tau\in I\), \(Z_d\) is Lorentzian.
\end{proof}

A stronger version without the entrywise positivity hypothesis was proved in \cite[Theorem~3.1]{lee2025counting} using an extra lemma to show \Mc-convexity in full generality.
For the present paper, \Cref{H-antiferromagnetic-Z-Lorentzian} suffices because, in \Cref{sec:antiferromagnetic}, we will reduce the proof of \Cref{antiferromagnetic-inhomo-clique-minimizing} to the entrywise-positive case.

\section{Entropy inequalities for exchangeable {\cLor} laws}
\label{sec:entropy}

Throughout this and subsequent sections, all random variables take values in finite sets. All logarithms are natural, and we use the convention \(0\log0 = 0\log(0/0) = 0\). By a \emph{law}, we mean a probability measure on a finite set.

Let \(\Omega\) be a finite nonempty set. Let \(X=(X_1,\dots,X_m)\) be an \(\Omega^m\)-valued random vector. For \(J\subseteq [m]\) and \(j\in J\), let \(X_{J}\defeq (X_i : i\in J)\) and \(X_{J\setminus j} \defeq X_{J\setminus\{j\}}\). Given a law \(\mu\) on \(\Omega^m\) and a random vector \((X_1,\dots,X_m)\sim\mu\), write \(\mu_i\defeq\cL(X_i)\) for the \(i\)-th marginal of \(\mu\), \(i\in[m]\).

We first recall standard entropy identities and variational formulae; for more background, see \cite{cover2006elements}. In \Cref{exchangeable-{\cLor}-laws-and-entropy-contraction}, we apply them to exchangeable {\cLor} laws to derive our entropy inequalities.

\subsection{Entropy and relative entropy}
\label{sec:entropy-and-relative-entropy}

Let \(\mu\) be a law on \(\Omega\). The \emph{entropy} of \(\mu\) is
\[
    \Ent(\mu) \defeq - \sum_{\omega\in\Omega} \mu(\omega) \log \mu(\omega).
\]
We use the elementary fact that entropy is concave: if \(\nu_1,\ldots,\nu_n\) are laws on \(\Omega\) and \(p_1,\ldots,p_n\ge0\) with \(\sum_i p_i=1\), then
\[
    \Ent \biggl(\,\sum_{i=1}^n p_i\nu_i \biggr) \ge \sum_{i=1}^n p_i \Ent(\nu_i).
\]
This follows from concavity of the function \(t\mapsto -t\log t\).

Let \(X\) and \(Y\) be random variables. The entropy \(\Ent(X)\) of \(X\) is defined as \(\Ent(\cL(X))\). Let
\[
    \Ent(X\mid Y) \defeq \sum_{y:\Pr(Y=y)>0} \Pr(Y=y) \Ent(X\mid Y=y),
\]
where \(\Ent(X\mid Y=y)\) denotes the entropy of the conditional law \(\cL(X\mid Y=y)\). Equivalently, \(\Ent(X\mid Y)=\Ent(X,Y)-\Ent(Y)\), where \(\Ent(X,Y)\) is the entropy of the joint law of \(X\) and \(Y\).
The \emph{mutual information} between \(X\) and \(Y\) is
\[
    \mI(X;Y) \defeq \Ent(X) + \Ent(Y) - \Ent(X,Y)
    = \Ent(X) - \Ent(X\mid Y).
\]

Let \(\mu\) and \(\nu\) be laws on \(\Omega\). Suppose \(\mu\ll\nu\), i.e., \(\mu(\omega)=0\) for all \(\omega\in\Omega\) with \(\nu(\omega)=0\). The \emph{relative entropy} \(\D{\mu}{\nu}\) is
\begin{align}\label{eq:variational}
     \D{\mu}{\nu}
     \defeq \sum_{\omega\in\Omega} \mu(\omega) \log \frac{\mu(\omega)}{\nu(\omega)}
     = \sup_{f\colon\Omega\to\RR} \bigl\{ \Ex_\mu f - \log \Ex_\nu e^f \bigr\}.
\end{align}
The supremum representation is the \emph{variational formula for relative entropy}, also known as the Donsker--Varadhan variational formula; see \cite[Appendix~C.2]{dupuis1997weak}. Relative entropy is nonnegative, with equality if and only if \(\mu=\nu\). If \(\mu\not\ll\nu\), we use the convention \(\D{\mu}{\nu}=+\infty\).

\begin{lemma}[Chain rule for relative entropy]\label{relative-entropy-chain-rule}
Let \(\mu\) and \(\nu\) be laws on \(\Omega\). Let \(\Xi\) be a finite set and let \(T\colon\Omega\to\Xi\) be a map. Assume \(\mu\ll\nu\). Let \(\mu'\) and \(\nu'\) be the pushforward laws on \(\Xi\), i.e.,
\[
    \mu'(\xi) \defeq \Pr_{X\sim\mu} (T(X)=\xi),
    \qquad
    \nu'(\xi) \defeq \Pr_{X\sim\nu} (T(X)=\xi).
\]
Then
\[
    \D{\mu}{\nu} = \D{\mu'}{\nu'}
    + \Ex_{Y\sim \mu'} \D{\mu(\blank\!\mid T=Y)}{\nu(\blank\!\mid T=Y)}.
\]
\end{lemma}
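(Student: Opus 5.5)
The plan is to prove the chain rule by a direct computation: factor the likelihood ratio through the map \(T\) and then sum over the fibres of \(T\). Let \(\Omega_\xi\defeq T^{-1}(\xi)\) for \(\xi\in\Xi\).

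First we dispose of the absolute-continuity issues. Since \(\mu\ll\nu\), pushing forward gives \(\mu'\ll\nu'\). Fix \(\xi\) with \(\mu'(\xi)>0\); then \(\nu'(\xi)>0\) as well. Both conditional laws
\[
\mu(\omega\mid T=\xi)=\mu(\omega)/\mu'(\xi),\qquad \nu(\omega\mid T=\xi)=\nu(\omega)/\nu'(\xi)
\]
are then well defined on \(\Omega_\xi\). The first is absolutely continuous with respect to the second, again because \(\mu\ll\nu\). Every term in the claimed identity is therefore finite. The expectation over \(Y\sim\mu'\) only involves those \(\xi\) with \(\mu'(\xi)>0\), so the undefined conditionals never occur.

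Next comes the main computation. Write
\[
\D{\mu}{\nu}=\sum_{\xi:\mu'(\xi)>0}\ \sum_{\omega\in\Omega_\xi}\mu(\omega)\log\frac{\mu(\omega)}{\nu(\omega)}.
\]
Terms with \(\mu(\omega)=0\) vanish by the convention \(0\log 0=0\log(0/0)=0\), and fibres with \(\mu'(\xi)=0\) contain only such terms. For \(\omega\in\Omega_\xi\) with \(\mu(\omega)>0\), split the ratio as
\[
\frac{\mu(\omega)}{\nu(\omega)}=\frac{\mu'(\xi)}{\nu'(\xi)}\cdot\frac{\mu(\omega\mid T=\xi)}{\nu(\omega\mid T=\xi)},
\]
and take logarithms. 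The first factor contributes \(\sum_\xi\log\frac{\mu'(\xi)}{\nu'(\xi)}\sum_{\omega\in\Omega_\xi}\mu(\omega)=\sum_\xi\mu'(\xi)\log\frac{\mu'(\xi)}{\nu'(\xi)}\), which equals \(\D{\mu'}{\nu'}\). The second factor contributes
\[
\sum_\xi\mu'(\xi)\sum_{\omega\in\Omega_\xi}\mu(\omega\mid T=\xi)\log\frac{\mu(\omega\mid T=\xi)}{\nu(\omega\mid T=\xi)},
\]
which is exactly \(\Ex_{Y\sim\mu'}\D{\mu(\blank\!\mid T=Y)}{\nu(\blank\!\mid T=Y)}\).

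There is no real obstacle in this argument. The only point that needs care is the bookkeeping of zero-probability terms. These are \(\omega\) with \(\mu(\omega)=0\), possibly also with \(\nu(\omega)=0\), and fibres with \(\mu'(\xi)=0\). I will handle them by restricting every sum to the support of \(\mu\) before splitting the logarithm, so that all logarithms being split are of finite positive ratios.
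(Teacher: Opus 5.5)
Your proposal is correct and follows essentially the same route as the paper: restrict to fibres \(T^{-1}(\xi)\) with \(\mu'(\xi)>0\), where \(\nu'(\xi)>0\) by absolute continuity. Then factor \(\mu(\omega)=\mu'(\xi)\,\mu(\omega\mid T=\xi)\) and \(\nu(\omega)=\nu'(\xi)\,\nu(\omega\mid T=\xi)\), split the logarithm, and sum over each fibre; your additional restriction to \(\omega\) with \(\mu(\omega)>0\) before splitting is a harmless and slightly more careful version of the paper's handling of zero-probability terms.
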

\begin{proof}
Let \(\Xi_+\defeq\{\xi\in\Xi : \mu'(\xi)>0\}\). We then rewrite $\D{\mu}{\nu}$ as
\begin{align}\label{eq:rephrase}
    \D{\mu}{\nu}
    = \sum_{\xi\in\Xi_+} \sum_{\omega\in T^{-1}(\xi)} \mu(\omega) \log\frac{\mu(\omega)}{\nu(\omega)}.
\end{align}
As \(\mu\ll\nu\), \(\nu'(\xi)>0\) holds for every \(\xi\in\Xi_+\). Hence, for \(\xi\in\Xi_+\) and \(\omega\in T^{-1}(\xi)\),
\[
    \mu(\omega)=\mu'(\xi) \mu(\omega\mid T=\xi),
    \qquad
    \nu(\omega)=\nu'(\xi) \nu(\omega\mid T=\xi).
\]
Substituting these into~\eqref{eq:rephrase} gives
\begin{align*}
    \D{\mu}{\nu}
    &= \sum_{\xi\in\Xi_+} \sum_{\omega\in T^{-1}(\xi)} \mu(\omega) \log\frac{\mu'(\xi)}{\nu'(\xi)}
        + \sum_{\xi\in\Xi_+} \sum_{\omega\in T^{-1}(\xi)} \mu(\omega)
    \log \frac{\mu(\omega\mid T=\xi)}{\nu(\omega\mid T=\xi)}
    \\&= \D{\mu'}{\nu'}
        + \sum_{\xi\in\Xi_+} \mu'(\xi) \D{\mu(\blank\!\mid T=\xi)}{\nu(\blank\!\mid T=\xi)}.
    \qedhere
\end{align*}
\end{proof}

\begin{proposition}[Mutual information as relative entropy]
\label{mutual-info-formula}
For random variables \(X\) and \(Y\),
\[
    \mI(X;Y)
    = \Ex_Y [ \D{\cL(X\mid Y)}{\cL(X)} ]
    = \sum_{y:\Pr(Y=y)>0} \Pr(Y=y) \D{\cL(X\mid Y=y)}{\cL(X)}.
\]
\end{proposition}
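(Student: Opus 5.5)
The plan is to expand both sides directly from the definitions and regroup the sum over joint values $(x,y)$ by the value $y$ of $Y$. Only terms with positive probability contribute, by the convention $0\log 0=0$, so all sums below run over pairs with $\Pr(X=x,Y=y)>0$. For such a pair, $\Pr(Y=y)>0$ and $\Pr(X=x)>0$.

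We start from $\mI(X;Y)=\Ent(X)+\Ent(Y)-\Ent(X,Y)$. Writing every entropy as a sum over the joint law, we obtain
\[
    \mI(X;Y)=\sum_{x,y}\Pr(X=x,Y=y)\log\frac{\Pr(X=x,Y=y)}{\Pr(X=x)\Pr(Y=y)}.
\]
Here we have used $\Ent(X)=-\sum_{x,y}\Pr(X=x,Y=y)\log\Pr(X=x)$, obtained by marginalizing over $y$, and similarly for $\Ent(Y)$.

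Next we substitute $\Pr(X=x,Y=y)=\Pr(Y=y)\Pr(X=x\mid Y=y)$. The summand becomes
\[
    \Pr(Y=y)\,\Pr(X=x\mid Y=y)\log\frac{\Pr(X=x\mid Y=y)}{\Pr(X=x)}.
\]
Summing over $x$ for each fixed $y$ with $\Pr(Y=y)>0$ gives exactly $\Pr(Y=y)\,\D{\cL(X\mid Y=y)}{\cL(X)}$, which is the second expression in the statement. This equals $\Ex_Y[\D{\cL(X\mid Y)}{\cL(X)}]$ by definition.

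The only point needing care is absolute continuity, $\cL(X\mid Y=y)\ll\cL(X)$. This is immediate: if $\Pr(X=x)=0$, then $\Pr(X=x\mid Y=y)=0$, so every relative entropy in the sum is finite and the manipulations are legitimate.

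As an alternative, the identity can also be read off from \Cref{relative-entropy-chain-rule}. Apply it with $\mu=\cL(X,Y)$, $\nu=\cL(X)\otimes\cL(Y)$ and $T$ the projection onto the $Y$-coordinate. The pushforwards of $\mu$ and $\nu$ under $T$ coincide, so their relative entropy is zero. The conditional laws are $\cL(X\mid Y=y)$ and $\cL(X)$ respectively. Finally, $\D{\mu}{\nu}=\mI(X;Y)$ by the first display above.

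There is no real obstacle; the proof is routine bookkeeping of the null-set conventions.
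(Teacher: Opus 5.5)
Your proof is correct and is essentially the paper's argument: both expand the entropies from the definitions and regroup the sum by the value of $Y$, and both handle $\cL(X\mid Y=y)\ll\cL(X)$ the same way. The paper starts from $\Ent(X)-\Ent(X\mid Y)$ and writes $\cL(X)$ as the mixture of the conditional laws, whereas you start from $\Ent(X)+\Ent(Y)-\Ent(X,Y)$; this difference is only cosmetic, and your alternative via \Cref{relative-entropy-chain-rule} with $\nu=\cL(X)\otimes\cL(Y)$ is also valid.
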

\begin{proof}
Let \(\mu\defeq \cL(X)\), and for each \(y\) with \(\Pr(Y=y)>0\), let \(\mu_y\defeq \cL(X\mid Y=y)\). Since
\[
    \mu(x) = \sum_{y:\Pr(Y=y)>0} \Pr(Y=y) \mu_y(x),
\]
we have \(\mu_y\ll\mu\) whenever \(\Pr(Y=y)>0\). Therefore,
\begin{align*}
    \mI(X;Y)
    &= \Ent(X) - \Ent(X\mid Y)
    \\&= -\sum_x \mu(x) \log \mu(x) + \sum_{y:\Pr(Y=y)>0} \Pr(Y=y) \sum_x \mu_y(x) \log \mu_y(x)
    \\&= \sum_{y:\Pr(Y=y)>0} \Pr(Y=y) \biggl( \sum_x \mu_y(x) \log \mu_y(x) - \sum_x \mu_y(x) \log \mu(x) \biggr)
    \\&= \sum_{y:\Pr(Y=y)>0} \Pr(Y=y) \D{\mu_y}{\mu}
    \\&= \Ex_Y [ \D{\cL(X\mid Y)}{\cL(X)} ].
    \qedhere
\end{align*}
\end{proof}

The \emph{Gibbs law} associated with \(f\colon\Omega\to\RR\) is the law \(\mu_f(\omega)\defeq e^{f(\omega)}/Z_f\), where \(Z_f \defeq \sum_{\omega\in\Omega} e^{f(\omega)}\). Combining this with the variational formula~\eqref{eq:variational} for relative entropy gives the \emph{Gibbs variational formula}.

\begin{proposition}[Gibbs variational formula]\label{Gibbs-variational-formula}
    Let \(f\colon\Omega\to\RR\). Then
    \[
        \log \sum_{\omega\in\Omega} e^{f(\omega)}
        = \sup_{\mu} \{ \Ex_\mu f + \Ent(\mu)\},
    \]
    where the supremum is over all laws on \(\Omega\). The unique optimizer is the Gibbs law \(\mu_f\).
\end{proposition}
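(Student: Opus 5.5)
The plan is to derive the Gibbs variational formula directly from the variational formula for relative entropy \eqref{eq:variational}, taking the reference law to be the uniform law on \(\Omega\). Let \(\upsilon\) denote the uniform law on \(\Omega\), so \(\upsilon(\omega)=1/\abs{\Omega}\). Every law \(\mu\) on \(\Omega\) satisfies \(\mu\ll\upsilon\), and direct expansion gives
\[
    \D{\mu}{\upsilon} = \log\abs{\Omega} - \Ent(\mu).
\]

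Next we compute the relative entropy against the Gibbs law, which yields both the identity and the uniqueness claim. Since \(\mu_f\) has full support, \(\mu\ll\mu_f\) for every \(\mu\). Using \(\log\mu_f(\omega)=f(\omega)-\log Z_f\), we obtain
\[
    \D{\mu}{\mu_f} = \sum_\omega \mu(\omega)\log\mu(\omega) - \Ex_\mu f + \log Z_f = -\Ent(\mu) - \Ex_\mu f + \log Z_f .
\]
Hence
\[
    \Ex_\mu f + \Ent(\mu) = \log Z_f - \D{\mu}{\mu_f}.
\]
Relative entropy is nonnegative, with equality if and only if the two laws coincide. Therefore \(\Ex_\mu f+\Ent(\mu)\le \log Z_f\) for all \(\mu\), with equality exactly when \(\mu=\mu_f\). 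This gives both the formula (the supremum is attained) and the uniqueness of the optimizer.

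Alternatively, and to match the paper's phrasing, one can invoke \eqref{eq:variational} with \(\nu=\upsilon\). This gives \(\Ex_\mu f-\log\Ex_\upsilon e^f\le \D{\mu}{\upsilon}\), and \(\log\Ex_\upsilon e^f=\log Z_f-\log\abs{\Omega}\), so the same inequality follows. The direct computation above is cleaner, however, because it also delivers the equality case.

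No step poses a real obstacle. The only care needed is with the convention \(0\log 0=0\) when \(\mu\) lacks full support, and with noting that \(\mu_f\) has full support, so that \(\D{\mu}{\mu_f}\) is finite and the identity holds for every law \(\mu\).
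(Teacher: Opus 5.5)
Your main argument, expanding \(\D{\mu}{\mu_f} = -\Ent(\mu) - \Ex_\mu f + \log Z_f\) and using nonnegativity of relative entropy together with its equality case, is exactly the paper's proof and is correct. The preliminary computation against the uniform law and the alternative route via \eqref{eq:variational} are also valid, but the proof does not need them.
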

\begin{proof}
    Let \(\mu\) be a law on \(\Omega\). Since \(\log\mu_f(\omega) = f(\omega) - \log Z_f\), we have
    \[
        \D{\mu}{\mu_f}
        = \sum_{\omega\in\Omega} \mu(\omega) \log \frac{\mu(\omega)}{\mu_f(\omega)}
        = \sum_{\omega\in\Omega} \mu(\omega) \log\mu(\omega) - \sum_{\omega\in\Omega} \mu(\omega) \log\mu_f(\omega)
        = -\Ent(\mu) - \Ex_\mu f + \log Z_f.
    \]
    Since \(\D{\mu}{\mu_f}\ge0\), rearranging gives \(\Ex_\mu f + \Ent(\mu) \le \log Z_f\), with equality if and only if \(\mu=\mu_f\). Taking the supremum proves the formula and the uniqueness of the optimizer.
\end{proof}

\subsection{Exchangeable {\cLor} laws and entropy contraction}
\label{exchangeable-{\cLor}-laws-and-entropy-contraction}

In this subsection, we introduce {\cLor} laws and establish properties that yield a strengthening of a special case of Shearer's inequality \cite[Equation~(22)]{chung1986intersection}; see also \cite[Proposition~15.7.4]{alon2016probabilistic}. As mentioned in \Cref{sec:methodology}, {\cLor} laws provide a multiset analogue of the Lorentzian probability measures introduced in \cite[Section~4.5]{branden2020lorentzian}. This is achieved by allowing letters to occur with multiplicity in each word \(\bomega\) in the sample space. The Lorentzian property of their count-generating polynomials, which will be defined below, allows us to prove a relative entropy contraction (\Cref{one-coord-relative-entropy-contraction}). This yields the desired entropy inequality in \Cref{entropy-deletion}.

Fix a finite set of letters \(A\). For \(m\ge1\), \(\bomega\in A^m\), and \(a\in A\), let $N_a(\bomega)$ denote the frequency of the letter $a$ in $\bomega$, i.e., $N_a(\bomega)\defeq \abs{\{j\in [m] : \omega_j=a\}}$. Let
\[
    T(\bomega) \defeq (N_a(\bomega) : a\in A)
\]
be the \emph{count vector} of \(\bomega\), which counts frequencies of letters in $A$. For $\bomega\in A^m$, all the frequencies sum to $m$. That is, $T(\bomega)\in \cQ_m(A)$, where \(\cQ_m(A) \defeq \{\bn\in (\NN_0)^A : \sum_{a\in A} n_a = m\}\).
For a law \(\mu\) on \(A^m\), define its \emph{count law} on \(\cQ_m(A)\) by
\[
    \pi_\mu(\bn) \defeq \Pr_{X\sim\mu} (T(X) = \bn),
    \qquad \bn\in \cQ_m(A).
\]
For \(\bx\in\RR^A\), we define the multivariate probability generating function of \(\pi_\mu\) by
\[
    h_\mu(\bx)
    \defeq \Ex_{Z\sim\pi_\mu} \bx^Z
    = \Ex_{X\sim\mu} \bx^{T(X)}.
\]
Since \(T(X)\in\cQ_m(A)\), this is a homogeneous polynomial of degree \(m\), which we call the \emph{count-generating polynomial} of \(\mu\).
We say that \(\mu\) is \emph{{\cLor}} if \(h_\mu\) is Lorentzian.

A subset \(\Omega_m\subseteq A^m\) is called \emph{permutation-invariant} if it is invariant under permutations of the coordinate positions, i.e., \(\bomega\in\Omega_m\) implies \((\omega_{\pi(1)},\dots,\omega_{\pi(m)}) \in \Omega_m\) for every permutation \(\pi\) of \([m]\). Let \(\Omega_\bullet=(\Omega_m : m\ge1)\) be a support family. We say that \(\Omega_\bullet\) is permutation-invariant if \(\Omega_m\) is permutation-invariant for all \(m\), and \emph{deletion-compatible} if, for each \(m\ge2\),
\[
    \bomega\in\Omega_m
    \implies
    \bomega_{[m]\setminus j}\in\Omega_{m-1}
    \qquad\forall j\in[m].
\]
A law on \(A^m\) is \emph{exchangeable} if it is invariant under permutations of the coordinate positions.
Let \(\cN_m(\Omega_\bullet)\) be the family of exchangeable {\cLor} laws supported on \(\Omega_m\). For the remainder of this section, we fix a support family \(\Omega_\bullet\) which is permutation-invariant and deletion-compatible, and simply write \(\cN_m\defeq\cN_m(\Omega_\bullet)\) for \(m\ge1\). We identify a law on \(\Omega_m\) with the law on \(A^m\) obtained by assigning mass zero to \(A^m\setminus\Omega_m\).

Using the closure properties of Lorentzian polynomials in \Cref{Lorentzian-closure}, we obtain the following properties of exchangeable {\cLor} laws.

\begin{lemma}\label{Nm-closure}
Let \(m\ge2\), \(\mu\in\cN_m\), and \(X\sim\mu\). Then for \(j\in[m]\),
\begin{enumerate}[(1)]
    \item \label{Nm-closure-deletion} the marginal law of \(X_{[m]\setminus j}\) belongs to \(\cN_{m-1}\);
    \item \label{Nm-closure-conditioning} for each \(a\in A\), if \(\Pr(X_j=a)>0\), the conditional law \(\cL(X_{[m]\setminus j} \mid X_j=a)\) belongs to \(\cN_{m-1}\).
\end{enumerate}
\end{lemma}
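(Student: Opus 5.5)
For each part we verify three things: support, exchangeability, and the Lorentzian property of the count-generating polynomial. The last is the substantive one, and the plan is to express the new count-generating polynomial through a derivative of \(h_\mu\), so that \Cref{Lorentzian-closure} applies.

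\emph{Support and exchangeability.} Since \(\mu\) is supported on \(\Omega_m\) and \(\Omega_\bullet\) is deletion-compatible, both \(X_{[m]\setminus j}\) and its conditional laws are supported on \(\Omega_{m-1}\). Exchangeability of \(\mu\) makes the law of \((X_1,\dots,X_m)\) invariant under permutations fixing \(j\). Hence the marginal on \([m]\setminus j\) is exchangeable, and so is the conditional law given \(X_j=a\), because the conditioning event is invariant under those permutations.

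\emph{Generating polynomial for part \ref{Nm-closure-deletion}.} Let \(\mu'\) be the law of \(X_{[m]\setminus j}\). By exchangeability, \(\Pr(X_j=a\mid T(X)=\bn)=n_a/m\), and on that event \(T(X_{[m]\setminus j})=\bn-\be_a\). Therefore
\[
    h_{\mu'}(\bx)=\sum_{\bn}\pi_\mu(\bn)\sum_{a\in A}\frac{n_a}{m}\bx^{\bn-\be_a}
    =\frac1m\sum_{a\in A}\partial_a h_\mu(\bx)=\frac1m D_{\bone}h_\mu(\bx).
\]
By \Cref{Lorentzian-closure}(1) and (3), \(D_{\bone}h_\mu\) is Lorentzian. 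It is nonzero because \(h_{\mu'}(\bone)=1\). Hence \(h_{\mu'}\) is Lorentzian.

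\emph{Generating polynomial for part \ref{Nm-closure-conditioning}.} Let \(\mu^a\) be the conditional law. By the same computation, restricted to the event \(X_j=a\),
\[
    h_{\mu^a}(\bx)=\frac{1}{\Pr(X_j=a)}\sum_{\bn}\pi_\mu(\bn)\frac{n_a}{m}\bx^{\bn-\be_a}
    =\frac{\partial_a h_\mu(\bx)}{m\Pr(X_j=a)}.
\]
Here \(\Pr(X_j=a)>0\), so \(\partial_a h_\mu\) is nonzero. It is Lorentzian, either directly from the recursive definition when \(\deg h_\mu\ge3\), or from \Cref{Lorentzian-closure}(3) with \(\bfb=\be_a\). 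Scaling by a positive constant preserves the Lorentzian property by \Cref{Lorentzian-closure}(1).

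\emph{Main obstacle.} The only delicate point is justifying \(\Pr(X_j=a\mid T(X)=\bn)=n_a/m\). This follows from exchangeability. Conditioned on \(T(X)=\bn\), the law is invariant under coordinate permutations, so each position carries letter \(a\) with the same probability. Summing over positions, the expected number of positions equal to \(a\) is \(n_a\), so each position has probability \(n_a/m\).

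A small edge case is when \(m-1=1\), so that the result has degree \(1\). A nonzero linear polynomial with nonnegative coefficients is Lorentzian, so this case is covered as well.
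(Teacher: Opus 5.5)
Your proposal is correct and follows essentially the same route as the paper. Exchangeability gives $\Pr(X_j=a\mid T(X)=\bn)=n_a/m$, so the deleted marginal has count-generating polynomial $\frac1m\sum_{a}\partial_a h_\mu$ and the conditional law has $\partial_a h_\mu/(m\Pr(X_j=a))$, both Lorentzian by \Cref{Lorentzian-closure}, and supports are handled by deletion-compatibility; the only differences are that the paper first reduces to $j=m$, and your nonzero and degree-one checks are harmless but not needed, since \Cref{Lorentzian-closure} already applies in every degree.
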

\begin{proof}
By exchangeability of \(\mu\), it suffices to prove the statement for \(j=m\).

\step{Proof of \ref{Nm-closure-deletion}}
Let \(\nu \defeq \cL(X_{[m-1]})\), which is exchangeable on \(A^{m-1}\) since \(\mu\) is exchangeable. For \(a\in A\), let \(\be_a\in(\NN_0)^A\) be the standard basis vector whose \(a\)-coordinate is \(1\). If the event \(\{X_m=a\}\) occurs, the count vector of \(X_{[m-1]}\) is \(T(X)-\be_a\). Moreover, exchangeability of \(\mu\) implies that, for \(\bn\in\cQ_m(A)\) with \(\pi_\mu(\bn)>0\), the probability that $X_m=a$ conditioned on $T(X)=\bn$ is proportional to the number of occurrences of $a$ in $\bn$, i.e.,
\begin{equation}\label{eq:conditional-prob-Xm=a}
    \Pr_{X\sim\mu} (X_m=a \mid T(X)=\bn) = \frac{n_a}{m}.
\end{equation}
Thus, for \(\bx=(x_a:a\in A)\), the count-generating polynomial of \(\nu\) is
\begin{align*}
    h_{\nu}(\bx)
    &= \Ex_{Z\sim\pi_\nu} \bx^{Z}
    \\&= \sum_{\bn\in\cQ_m(A) : \pi_\mu(\bn)>0} \biggl( \pi_\mu(\bn) \sum_{a\in A : n_a>0} \Pr_{X\sim\mu} (X_m=a \mid T(X)=\bn) \, \bx^{\bn-\be_a} \biggr)
    \\&= \sum_{\bn\in\cQ_m(A) : \pi_\mu(\bn)>0} \biggl( \pi_\mu(\bn) \sum_{a\in A: n_a>0} \frac{n_a}{m} \bx^{\bn-\be_a} \biggr).
\end{align*}
If $n_a>0$, then $n_a \bx^{\bn-\be_a}= \partial_a \bx^{\bn}$; otherwise, $\partial_a \bx^{\bn}=0$, so $\sum_{a\in A: n_a>0} n_a \bx^{\bn-\be_a} = \sum_{a\in A}\partial_a \bx^{\bn}$. Thus, by exchanging the order of the sum above, we have $h_{\nu}(\bx)=\frac{1}{m} \sum_{a\in A} \partial_{a} h_\mu(\bx)$.
Since \(h_\mu\) is Lorentzian and \(h_\nu\) is its directional derivative in the direction \(\frac{1}{m} \bone\), \Cref{Lorentzian-closure} shows that \(h_\nu\) is Lorentzian. Hence, \(\nu\) is {\cLor}.
As \(\Omega_\bullet\) is deletion-compatible, \(\supp\nu \subseteq \Omega_{m-1}\) follows immediately. Therefore, \(\nu\in\cN_{m-1}\).

\step{Proof of \ref{Nm-closure-conditioning}}
Fix \(a\in A\) with \(\Pr(X_m=a)>0\). Let \(\rho\defeq\cL(X_{[m-1]} \mid X_m=a)\), which is indeed exchangeable on \(A^{m-1}\). 
Rewrite the count-generating polynomial of \(\rho\) as
\begin{align*}
    h_\rho(\bx)
    = \Ex_{X\sim\mu} \bigl[ \bx^{T(X)-\be_{a}}\mid X_m=a \bigr]
    = \frac{1}{\Pr(X_m=a)} \sum_{\bomega\in A^m : \omega_m=a} \Pr_{X\sim\mu} (X=\bomega) \, \bx^{T(\bomega)-\be_{a}}.
\end{align*}
\Cref{eq:conditional-prob-Xm=a} again gives
\begin{align*}
    \sum_{\bomega\in A^m : \omega_m=a} \Pr_{X\sim\mu} (X=\bomega) \, \bx^{T(\bomega)-\be_{a}}
    &= \sum_{\substack{\bn\in\cQ_m(A) :\\ n_a>0,\, \pi_\mu(\bn)>0}} \pi_\mu(\bn) \Pr_{X\sim\mu} (X_m=a\mid T(X)=\bn) \, \bx^{\bn-\be_{a}}
    \\&= \sum_{\substack{\bn\in\cQ_m(A) :\\ n_a>0,\, \pi_\mu(\bn)>0}} \pi_\mu(\bn) \frac{n_a}m \bx^{\bn-\be_{a}}
    \\&= \frac{1}{m} \partial_{a} h_\mu(\bx).
\end{align*}
Since \(h_\mu\) is Lorentzian, \Cref{Lorentzian-closure} again shows that \(h_\rho\) is Lorentzian. Thus, \(\rho\) is {\cLor}.
As \(\supp\rho \subseteq \supp\nu \subseteq \Omega_{m-1}\), we conclude that \(\rho\in\cN_{m-1}\).
\end{proof}

Next, we prove a one-coordinate relative entropy contraction for exchangeable laws \(\mu\) and \(\nu\), with \(\nu\) {\cLor}.

\begin{lemma}\label{one-coord-relative-entropy-contraction}
Let \(m\ge1\) and let \(\mu\) and \(\nu\) be exchangeable laws on \(A^m\). If \(\nu\) is {\cLor}, then
\[
    \D{\mu_1}{\nu_1} \le \frac{1}{m} \D{\mu}{\nu}.
\]
\end{lemma}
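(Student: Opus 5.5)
The plan is to reduce both sides to count laws, use the variational formula \eqref{eq:variational} on the marginals, and then use concavity of \(h_\nu^{1/m}\) (\Cref{Lorentzian-implies-root-concavity}) to control the resulting moment generating function.

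\emph{Reduction to count laws.} We may assume \(\mu\ll\nu\), since otherwise the right-hand side is \(+\infty\). Apply \Cref{relative-entropy-chain-rule} with \(T\) the count-vector map. Both \(\mu\) and \(\nu\) are exchangeable, so conditioned on \(T=\bn\) each is uniform on the words with count vector \(\bn\). Hence the conditional relative entropies vanish and
\[
    \D{\mu}{\nu}=\D{\pi_\mu}{\pi_\nu}.
\]
Also, by \eqref{eq:conditional-prob-Xm=a} (with index \(1\) in place of \(m\)), \(\mu_1(a)=\Ex_{Z\sim\pi_\mu} Z_a/m\), and similarly for \(\nu_1\).

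\emph{Variational step.} Using \eqref{eq:variational} for the marginals, it suffices to show that for every \(f\colon A\to\RR\),
\[
    m\Ex_{\mu_1} f - m\log\Ex_{\nu_1} e^f \le \D{\pi_\mu}{\pi_\nu}.
\]
Put \(x_a\defeq e^{f(a)}\) and \(g(\bn)\defeq\sum_a n_a f(a)=\log \bx^{\bn}\). Then \(m\Ex_{\mu_1}f=\Ex_{\pi_\mu} g\). Applying \eqref{eq:variational} to the count laws with test function \(g\) gives
\[
    \D{\pi_\mu}{\pi_\nu}\ge \Ex_{\pi_\mu} g-\log\Ex_{\pi_\nu} e^{g}=m\Ex_{\mu_1}f-\log h_\nu(\bx).
\]
So the claim reduces to the inequality
\[
    h_\nu(\bx)^{1/m}\le \Ex_{\nu_1} e^f=\sum_a \nu_1(a)x_a .
\]

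\emph{Key step: concavity.} The expected obstacle is this last inequality, and I would handle it via \Cref{Lorentzian-implies-root-concavity}. The function \(h_\nu^{1/m}\) is concave and homogeneous of degree \(1\) on the positive orthant. Since \(h_\nu(\bone)=1\), the tangent plane at \(\bone\) gives
\[
    h_\nu^{1/m}(\bx)\le \sum_a x_a\,\partial_a(h_\nu^{1/m})(\bone),
\]
using \Cref{euler-homogeneous} to remove the constant term. Next,
\[
    \partial_a h_\nu^{1/m}(\bone)=\tfrac1m\partial_a h_\nu(\bone)=\tfrac1m\Ex_{\pi_\nu}Z_a=\nu_1(a),
\]
which is exactly the required bound.

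\emph{Boundary issues.} A minor issue is that concavity is only stated on the open orthant and requires \(\deg\ge1\). Since \(x_a=e^{f(a)}>0\), the point \(\bx\) lies in the open orthant. The tangent inequality at \(\bone\) is legitimate because \(h_\nu\) is differentiable there and concave on the open convex set.

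Combining the three steps and taking the supremum over \(f\) yields \(m\D{\mu_1}{\nu_1}\le \D{\mu}{\nu}\).
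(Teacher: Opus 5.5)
Your proposal is correct and follows the paper's proof essentially step for step: the chain rule reduces $\D{\mu}{\nu}$ to $\D{\pi_\mu}{\pi_\nu}$, the variational formula is applied to the count laws with the linear test function $\bn\mapsto\sum_a n_a f(a)$, and the tangent plane of the concave function $h_\nu^{1/m}$ at $\bone$, together with Euler's theorem, bounds $h_\nu(\bx)^{1/m}$ by $\sum_a \nu_1(a)x_a$. The one detail the paper spells out that you leave implicit is that $\mu\ll\nu$ forces $\mu_1\ll\nu_1$ (immediate from exchangeability), which you need in order to invoke \eqref{eq:variational} for the marginals as stated.
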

\begin{proof}
If \(\mu\not\ll\nu\), then \(\D{\mu}{\nu}=+\infty\), which trivializes the desired inequality. We thus assume \(\mu\ll\nu\).

We start by checking \(\mu_1\ll\nu_1\) and \(\pi_\mu\ll\pi_\nu\).
If \(\nu_1(a)=0\) for some \(a\in A\), then since \(\nu\) is exchangeable, \(\nu(\bomega)=0\) for every \(\bomega\in A^m\) that contains \(a\). From \(\mu\ll\nu\), it follows that \(\mu(\bomega)=0\) for all such \(\bomega\), whence \(\mu_1(a)=0\).
Similarly, if \(\pi_\nu(\bn)=0\) for some \(\bn\in\cQ_m(A)\), then \(\nu(\bomega)=0\) for all \(\bomega\in A^m\) whose count vector is \(\bn\), so again \(\mu(\bomega)=0\) for all such \(\bomega\in A^m\). Thus, \(\pi_\mu(\bn)=0\).

For \(a\in A\), let
\[
    u_a\defeq \Ex_{X\sim\mu} N_a(X) = \Ex_{Z\sim\pi_\mu} Z(a),
    \qquad v_a\defeq \Ex_{Y\sim\nu} N_a(Y) = \Ex_{W\sim\pi_\nu} W(a).
\]
In other words, $u_a$ and $v_a$ denote the expected numbers of occurrences of the letter $a$ in $X\sim\mu$ and $Y\sim \nu$, respectively.
Since \(\mu\) is exchangeable,
\[
    \mu_1(a) = \frac{1}{m} \sum_{j=1}^m \Pr_{X\sim\mu} (X_j=a)
    = \frac{1}{m} \Ex_{X\sim\mu} N_a(X)
    = \frac{u_a}{m},
\]
and similarly, \(\nu_1(a) = v_a/m\).

We claim that \(\D{\mu}{\nu} = \D{\pi_\mu}{\pi_\nu}\). Applying the chain rule for relative entropy,~\Cref{relative-entropy-chain-rule}, gives
\begin{equation}\label{eq:chain-rule-for-relative-entropy-applied}
    \D{\mu}{\nu} = \D{\pi_\mu}{\pi_\nu} + \Ex_{Z\sim\pi_\mu} \D{\mu(\blank\!\mid T=Z)}{\nu(\blank\!\mid T=Z)}.
\end{equation}
Since \(\pi_\mu\ll\pi_\nu\), the conditional law \(\nu(\blank\!\mid T=\bn)\) is well-defined whenever \(\pi_\mu(\bn)>0\). For every such \(\bn\), exchangeability implies that both \(\mu(\blank\!\mid T=\bn)\) and \(\nu(\blank\!\mid T=\bn)\) are uniform on \(\{\bomega\in A^m : T(\bomega)=\bn\}\). Hence, the second term on the right-hand side of \eqref{eq:chain-rule-for-relative-entropy-applied} vanishes, so \(\D{\mu}{\nu}=\D{\pi_\mu}{\pi_\nu}\). It remains to show
\begin{equation}\label{eq:one-coord-relative-entropy-contraction-reduced}
    \D{\mu_1}{\nu_1} \le \frac{1}{m} \D{\pi_\mu}{\pi_\nu}.
\end{equation}

Let \(h_\nu\) be the count-generating polynomial of \(\nu\). By definition, \(h_\nu(\bone)=1\), and for \(a\in A\), \(\partial_{a} h_\nu(\bone) = \Ex_{X\sim\nu} N_a(X) = v_a\). Thus,
\[
    \partial_{a} (h_\nu^{1/m}) (\bone) = \frac{1}{m} h_\nu(\bone)^{1/m-1} \partial_{a} h_\nu(\bone) = \frac{1}{m} v_a.
\]
Since \(\nu\) is {\cLor}, \(h_\nu\) is Lorentzian, so by \Cref{Lorentzian-implies-root-concavity}, \(h_\nu^{1/m}\) is concave on the positive orthant. Thus, for \(\bx\in(\RRp)^A\),
\[
    h_\nu(\bx)^{1/m} \le h_\nu(\bone)^{1/m} + \nabla (h_\nu^{1/m})(\bone) \cdot (\bx-\bone)
    = 1 + \frac{1}{m} \sum_{a\in A} v_a (x_a-1)
    = \frac{1}{m} \sum_{a\in A} v_a x_a,
\]
where the last equality uses \(\sum_{a\in A} v_a/m = \sum_{a\in A} \nu_1(a) = 1\).
From the definition of \(h_\nu\), it follows that
\begin{equation}\label{eq:h-nu-1/m-concavity-result}
    \Ex_{Z\sim\pi_\nu} \prod_{a\in A} x_a^{Z(a)}
    \le \biggl( \frac{1}{m} \sum_{a\in A} v_a x_a \biggr)^{\!m}.
\end{equation}

Now, for any \(\bfeta\in\RR^A\), the variational formula for relative entropy with the function \(f\colon \cQ_m(A)\to\RR\) defined by \(f(\bn)\defeq \sum_{a\in A} n_a \eta_a\) gives
\begin{align*}
    \D{\pi_\mu}{\pi_\nu} &\ge \Ex_{\pi_\mu} f - \log \Ex_{\pi_\nu} e^f
    \\&= \sum_{a\in A} \Ex_{Z\sim\pi_\mu} Z(a) \eta_a - \log \Ex_{Z\sim\pi_\nu} \exp\biggl(\, \sum_{a\in A} Z(a) \eta_a \biggr)
    \\&= \sum_{a\in A} u_a \eta_a - \log \Ex_{Z\sim\pi_\nu} \exp\biggl(\, \sum_{a\in A} Z(a) \eta_a \biggr).
\end{align*}
The expectation inside the logarithm is upper-bounded using \eqref{eq:h-nu-1/m-concavity-result} applied with \(x_a=e^{\eta_a}\) for \(a\in A\), so
\begin{equation}\label{eq:D-pi-mu-pi-nu-lower-bd}
    \D{\pi_\mu}{\pi_\nu}
    \ge \sum_{a\in A} u_a \eta_a - m \log \biggl( \frac{1}{m} \sum_{a\in A} v_a e^{\eta_a} \biggr).
\end{equation}

Finally, we compute \(\D{\mu_1}{\nu_1}\). Again, by the variational formula for relative entropy~\eqref{eq:variational},
\[
    \D{\mu_1}{\nu_1}
    = \sup_{f\colon A\to \RR} \{\Ex_{\mu_1} f - \log \Ex_{\nu_1} e^f\}.
\]
After reparametrizing the supremum by \((f(a):a\in A)\in\RR^A\), the right-hand side becomes
\[
    \sup_{\bfeta\in\RR^A} \biggl\{ \sum_{a\in A} \mu_1(a) \eta_a - \log \biggl(\, \sum_{a\in A} \nu_1(a) e^{\eta_a} \biggr) \biggr\}
    = \sup_{\bfeta\in\RR^A} \biggl\{ \frac{1}{m} \sum_{a\in A} u_a \eta_a - \log \biggl( \frac{1}{m} \sum_{a\in A} v_a e^{\eta_a} \biggr) \biggr\}
    \le \frac{1}{m} \D{\pi_\mu}{\pi_\nu},
\]
where the inequality follows from \eqref{eq:D-pi-mu-pi-nu-lower-bd}. This proves \eqref{eq:one-coord-relative-entropy-contraction-reduced} and completes the proof.
\end{proof}

\begin{remark}
\Cref{one-coord-relative-entropy-contraction} can also be obtained by using entropic independence~\cite[Theorem~4]{anari2022entropic} through polarization.
To sketch briefly, our count-generating functions \(h_\mu\) and \(h_\nu\) can be multilinearized by replacing each \(a\in A\) with \(m\) labeled copies $a_1,a_2,\dots,a_m$ with a suitable normalization to give laws \(\widetilde{\mu}\) and \(\widetilde{\nu}\) on \(\binom{A\times[m]}{m}\). For instance, $x_a^k$ is replaced by $\binom{m}{k}^{-1}\sum_{K\in\binom{[m]}{k}}\prod_{i\in K}x_{a_i}$.
Since this polarization preserves the Lorentzian property \cite[Proposition~3.1]{branden2020lorentzian}, the generating polynomial of \(\widetilde{\nu}\) is Lorentzian and hence log-concave, so \(\widetilde{\nu}\) is entropically independent. Because \(\mu\) and \(\nu\) are exchangeable, these lifts satisfy \(\D{\widetilde{\mu}}{\widetilde{\nu}} = \D{\mu}{\nu}\).
The one-element down marginals, obtained by choosing a uniformly random element of the lifted \(m\)-set, are \(\mu_1\otimes u_m\) and \(\nu_1\otimes u_m\), respectively, where \(u_m\) is the uniform law on \([m]\). The \(\frac{1}{m}\)-contraction for this operation therefore gives the lemma.
\end{remark}

Finally, we prove a strengthening of a special case of Shearer's inequality for exchangeable {\cLor} laws.
For a random vector \(X=(X_1,\dots,X_m)\), Shearer's inequality states that if \(\mathcal{G}\) is a family of subsets of \([m]\) such that each \(i\in[m]\) belongs to at least \(k\) members of \(\mathcal{G}\), then
\[
    \sum_{G\in\mathcal{G}} \Ent(X_i : i\in G) - k \Ent(X) \ge 0.
\]
When \(\mathcal{G}\) consists of the \((m-1)\)-subsets of \([m]\), the inequality becomes
\begin{equation}\label{eq:delete-one-Han}
    \sum_{j=1}^m \Ent(X_{[m]\setminus j}) - (m-1)\Ent(X) \ge 0.
\end{equation}
This is also called the delete-one case of Han's inequality \cite{han1978nonnegative}, which predates Shearer's inequality; see also \cite[Chapter~17]{cover2006elements}.
We strengthen this for \(X\sim\mu\in\cN_m\) by showing that
\[
    \sum_{j=1}^m \Ent(X_{[m]\setminus j}) - (m-1)\Ent(X)
    \ge \sum_{j=1}^m \Ent(X_j) - \Ent(X);
\]
the right-hand side is nonnegative by subadditivity of entropy.

\begin{lemma}\label{entropy-deletion}
    Let \(m\ge2\). For any \(X\sim \mu\in \cN_m\),
    \[
        \sum_{j=1}^m \Ent(X_{[m]\setminus j}) \ge (m-2) \Ent(X) + \sum_{j=1}^m \Ent(X_j).
    \]
\end{lemma}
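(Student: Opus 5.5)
By exchangeability of \(\mu\), all the terms \(\Ent(X_{[m]\setminus j})\) are equal to one another, and so are all the terms \(\Ent(X_j)\). So I would work with the prefix total correlations
\[
    T_k \defeq \sum_{i=1}^k \Ent(X_i) - \Ent(X_{[k]}), \qquad I_k \defeq \mI(X_k;X_{[k-1]}) = T_k - T_{k-1}\quad(k\ge2),
\]
with \(T_1=0\). Writing \(\Ent(X)=\Ent(X_{[m-1]})+\Ent(X_m\mid X_{[m-1]})\) and \(\Ent(X_m\mid X_{[m-1]})=\Ent(X_1)-I_m\), the claimed inequality becomes
\[
    (m-2)\, I_m \ge 2\,T_{m-1} = 2\sum_{k=2}^{m-1} I_k .
\]
This is a routine rearrangement that I would verify carefully. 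Since \(\sum_{k=2}^{m-1}(k-1)=(m-1)(m-2)/2\), it suffices to prove the monotonicity
\[
    \frac{I_k}{k-1} \le \frac{I_{k+1}}{k}\qquad (2\le k\le m-1),
\]
which gives \(I_k\le \frac{k-1}{m-1}I_m\) for every \(k\).

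To get this monotonicity, I would first upgrade \Cref{one-coord-relative-entropy-contraction} to a \(j\)-coordinate contraction. The claim is: if \(\rho,\nu\) are exchangeable laws on \(A^k\) with \(\nu\in\cN_k\), then \(\D{\rho_{[j]}}{\nu_{[j]}}\le \frac{j}{k}\D{\rho}{\nu}\) for \(1\le j\le k\). I would prove this by induction on \(k\), assuming \(\rho\ll\nu\). Apply \Cref{relative-entropy-chain-rule} with \(T\) the first coordinate, both to \(\D{\rho}{\nu}\) and to \(\D{\rho_{[j]}}{\nu_{[j]}}\). The conditional laws given \(X_1=b\) are again exchangeable, and by \Cref{Nm-closure}\ref{Nm-closure-conditioning} the conditional of \(\nu\) lies in \(\cN_{k-1}\). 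The induction hypothesis then yields
\[
    \D{\rho_{[j]}}{\nu_{[j]}} \le \D{\rho_1}{\nu_1} + \tfrac{j-1}{k-1}\bigl(\D{\rho}{\nu}-\D{\rho_1}{\nu_1}\bigr).
\]
Bounding \(\D{\rho_1}{\nu_1}\le\frac1k\D{\rho}{\nu}\) by \Cref{one-coord-relative-entropy-contraction}, the coefficient becomes \(\frac{j-1}{k-1}+\frac{k-j}{k(k-1)}=\frac jk\), as required.

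Now fix \(k\le m-1\). By \Cref{Nm-closure}\ref{Nm-closure-deletion} applied repeatedly, \(\nu\defeq\cL(X_{[k]})\) lies in \(\cN_k\). For each \(a\) with \(\Pr(X_{k+1}=a)>0\), let \(\rho_a\defeq\cL(X_{[k]}\mid X_{k+1}=a)\); this is exchangeable because \(\mu\) is. Applying the \(j=k-1\) contraction to \(\rho_a,\nu\) and averaging over \(a\) using \Cref{mutual-info-formula} gives
\[
    \mI(X_{k+1};X_{[k-1]})\le \tfrac{k-1}{k}\,\mI(X_{k+1};X_{[k]}) = \tfrac{k-1}{k}\,I_{k+1}.
\]
By exchangeability, \(\mI(X_{k+1};X_{[k-1]})=\mI(X_k;X_{[k-1]})=I_k\), which is exactly the monotonicity.

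I expect the main obstacle to be the \(j\)-coordinate contraction. One has to check that conditioning on a coordinate preserves both exchangeability and the \(\cN\)-membership of the reference law, which is handled by \Cref{Nm-closure}. One also has to handle absolute continuity in the chain rule: the case \(\rho\not\ll\nu\) is trivial, and otherwise the conditionals of \(\nu\) are defined wherever \(\rho_1\) has mass. The final algebraic reduction I would double-check by expanding both sides in terms of \(\Ent(X_{[m-1]})\), \(\Ent(X_1)\) and \(I_m\).
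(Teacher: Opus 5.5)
Your proof is correct, but it takes a genuinely different route from the paper's.

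I checked the rearrangement: for exchangeable laws the defect equals $(m-2)I_m-2\sum_{k=2}^{m-1}I_k$. The $j$-coordinate contraction also goes through by induction, using the chain rule, \Cref{Nm-closure}\ref{Nm-closure-conditioning} for the reference law's conditionals, and the one-coordinate bound. The coefficient identity $\frac{j-1}{k-1}+\frac{k-j}{k(k-1)}=\frac jk$ is right, and its use is legitimate since $k-j\ge 0$.

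\emph{The paper's route.} It inducts on $m$ directly on the entropy inequality. With $Y=X_{[m-1]}$ and $Z=X_m$, the defect splits as $\Ex_Z[\Delta_{m-1}(Y\mid Z)]+\mI(Y;Z)-\sum_{i=1}^{m-1}\mI(Y_i;Z)$.
\begin{enumerate}
\item The first term is nonnegative by the induction hypothesis applied to the conditional laws $\cL(Y\mid Z=z)\in\cN_{m-1}$.
\item The rest needs only the single comparison $(m-1)\,\mI(X_1;X_m)\le \mI(X_{[m-1]};X_m)$. This is \Cref{one-coord-relative-entropy-contraction} averaged over $X_m$.
\end{enumerate}

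\emph{Your route.} You use exchangeability at the outset to reduce everything to the increments $I_k$. You then place the induction inside a stronger multi-coordinate contraction $\D{\rho_{[j]}}{\nu_{[j]}}\le\frac jk\D{\rho}{\nu}$. This is the multiset analogue of the $k\to j$ down-operator contraction from entropic independence.

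\emph{Comparison.} The paper's argument is shorter and needs only the $j=1$ contraction. Yours concentrates all of the Lorentzian input in one contraction statement, after which the entropy inequality is pure bookkeeping. It also proves more: the full chain $I_2\le I_3/2\le\cdots\le I_m/(m-1)$. Its extreme comparison, $(m-1)I_2\le I_m$, is exactly the paper's key step.

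\emph{A small simplification.} You do not need the stepwise monotonicity. Apply the $(k-1)$-coordinate contraction directly to $\cL(X_{[m-1]}\mid X_m=a)$ against the reference law $\cL(X_{[m-1]})\in\cN_{m-1}$, and average over $a$. This gives $I_k=\mI(X_{[k-1]};X_m)\le\frac{k-1}{m-1}I_m$ in one step.
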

\begin{proof}
We proceed by induction on \(m\).
The base case \(m=2\) holds with equality.
Assume \(m\ge3\) and let
\begin{align}\label{eq:delta_m}
    \Delta_m(X)
    \defeq \sum_{j=1}^m \Ent(X_{[m]\setminus j}) - (m-2) \Ent(X) - \sum_{j=1}^m \Ent(X_j),
\end{align}
so that our goal is to show $\Delta_m(X)\geq 0$.
Write \(Y\defeq X_{[m-1]}\) and \(Z\defeq X_m\).
Then $\sum_{j=1}^m \Ent(X_{[m]\setminus j}) = \Ent(Y) + \sum_{i=1}^{m-1} \Ent(Y_{[m-1]\setminus i},Z)$ and $\sum_{j=1}^m \Ent(X_j) = \Ent(Z) + \sum_{i=1}^{m-1} \Ent(Y_i)$.
Substituting these into~\eqref{eq:delta_m} gives
\begin{align*}
    \Delta_m(X)
   &= \Ent(Y) + \sum_{i=1}^{m-1} \Ent(Y_{[m-1]\setminus i},Z) - (m-2) \Ent(Y,Z) - \Ent(Z) - \sum_{i=1}^{m-1} \Ent(Y_i)
    \\&= \Ent(Y) + \sum_{i=1}^{m-1} \Ent(Y_{[m-1]\setminus i} \mid Z) - (m-2) \Ent(Y \mid Z) - \sum_{i=1}^{m-1} \Ent(Y_i).
\end{align*}
Using \(\Ent(Y) = \Ent(Y\mid Z) + \mI(Y;Z)\) and \(\Ent(Y_i) = \Ent(Y_i\mid Z) + \mI(Y_i;Z)\), we get
\begin{align*}
    \Delta_m(X) &= \biggl(\, \sum_{i=1}^{m-1} \Ent(Y_{[m-1]\setminus i} \mid Z) - (m-3) \Ent(Y\mid Z) - \sum_{i=1}^{m-1} \Ent(Y_i\mid Z) \biggr) + \mI(Y;Z) - \sum_{i=1}^{m-1} \mI(Y_i;Z)
    \\&= \Ex_Z \left[ \Delta_{m-1}(Y\mid Z)\right] + \mI(Y;Z) - \sum_{i=1}^{m-1} \mI(Y_i;Z).
\end{align*}
By \Cref{Nm-closure}, for every \(z\) with \(\Pr(Z=z)>0\), the conditional law \(\cL(Y\mid Z=z)\) belongs to \(\cN_{m-1}\). Hence, the induction hypothesis gives
\[
    \Ex_Z \Delta_{m-1}(Y\mid Z)\ge0,
\]
so it remains to show
\begin{equation}\label{eq:mutual-info-sum-ineq}
    \sum_{i=1}^{m-1} \mI(Y_i;Z) \le \mI(Y;Z).
\end{equation}
Let \(\nu\defeq \cL(Y)\), and for every \(z\) with \(\Pr(Z=z)>0\), let \(\rho_z\defeq \cL(Y\mid Z=z)\). \Cref{Nm-closure} implies that both \(\nu\) and \(\rho_z\) belong to \(\cN_{m-1}\).
\Cref{mutual-info-formula} and exchangeability of \(\nu\) and \(\rho_z\) give
\[
    \sum_{i=1}^{m-1} \mI(Y_i;Z)
    = \sum_{i=1}^{m-1} \Ex_Z [ \D{(\rho_Z)_i}{\nu_i} ]
    = (m-1) \Ex_Z [ \D{(\rho_Z)_1}{\nu_1} ].
\]
For every such \(z\), \Cref{one-coord-relative-entropy-contraction} implies \(\D{(\rho_z)_1}{\nu_1} \le \frac{1}{m-1} \D{\rho_z}{\nu}\). Applying \Cref{mutual-info-formula} again gives \(\mI(Y;Z) = \Ex_Z [\D{\rho_Z}{\nu}]\).
Combining these yields \eqref{eq:mutual-info-sum-ineq}, completing the proof.
\end{proof}

\begin{remark}
Since each \(X_{[m]\setminus j}\) is a function of \(X\), we have \(\Ent(X_{[m]\setminus j}) \le \Ent(X)\).
Using this with \Cref{entropy-deletion} implies that for every finite set of letters \(A\), if \(m\ge2\) and \(X\sim\mu\in\cN_m\),
\begin{equation}\label{eq:factor-two-entropy-inequality}
    \Ent(X)\geq \frac{1}{2}\sum_{j=1}^m \Ent(X_j).
\end{equation}
When \(A=\{0,1\}\), identify \(X\) with the random subset \(S_X\defeq\{j\in[m]:X_j=1\}\), and let \(\rho\) be the law of \(S_X\).
Following Anari--Oveis Gharan--Vinzant~\cite{anari2021logconcave}, a law \(\theta\) on \(2^{[m]}\) is called \emph{log-concave} if its multiaffine generating polynomial \(g_\theta(\bz)\defeq\sum_{S\subseteq[m]}\theta(S)\prod_{i\in S}z_i\) is log-concave on the positive orthant.
In the present exchangeable Lorentzian setting, polarization shows that both \(\rho\) and its complementary law \(\rho^\ast(S)\defeq\rho([m]\setminus S)\) are log-concave.
Hence, \eqref{eq:factor-two-entropy-inequality} also follows from \cite[Corollary~5.6]{anari2021logconcave}.
Thus, under our stronger exchangeable Lorentzian hypothesis, \Cref{entropy-deletion} refines \eqref{eq:factor-two-entropy-inequality} to a delete-one entropy inequality applicable to an arbitrary finite set of letters.
\end{remark}

\section{Optimized pressure and its monotonicity}
\label{sec:abstract-optimized-pressure}

We now convert the entropy estimates from \Cref{sec:entropy} into an abstract monotonicity principle for extremal logarithmic values of partition functions, which we call the optimized pressure\footnote{This terminology is motivated by the statistical-mechanical interpretation of the logarithm of a partition function, up to normalization, as a pressure; see, e.g., \cite[Chapter~3]{friedli2018statistical}.}.

Let \(A\) be a finite set of letters. Let \(\Omega_\bullet=(\Omega_m : m\ge1)\) be a support family which is permutation-invariant and deletion-compatible. Assume each \(\Omega_m\) is nonempty.
For each \(m\ge1\), let \(\cL_m\) be the family of laws on \(\Omega_m\). 
Denote by \(\cE_m\) and \(\cN_m\defeq\cN_m(\Omega_\bullet)\) the families of exchangeable laws and exchangeable {\cLor} laws on \(\Omega_m\), respectively. In particular, \(\cN_m\subseteq \cE_m\subseteq \cL_m\). Whenever \(\mu\in\cL_m\), write \(X=(X_1,\dots,X_m)\sim\mu\).

For each \(m\ge 1\), let \(U_m\colon \Omega_m\to\RR\) be a permutation-invariant energy function, meaning that \(U_m(\omega_{\pi(1)},\dots,\omega_{\pi(m)}) = U_m(\bomega)\) for every \(\bomega\in\Omega_m\) and every permutation \(\pi\) of \([m]\). Let \(\cP^+(A)\) be the family of positive laws on \(A\), i.e.,
\[
    \cP^+(A) \defeq \biggl\{\alpha\colon A\to\RRp : \sum_{a\in A} \alpha(a) = 1\biggr\}.
\]
For \(\alpha\in\cP^+(A)\), define
\[
    \cZ_{m}(\alpha) \defeq
    \sum_{\bomega\in\Omega_m}
    \exp(U_{m}(\bomega))
    \prod_{j=1}^m\alpha(\omega_j),
\]
and the \emph{optimized pressure}
\[
    F_m \defeq \sup_{\alpha\in\cP^+(A)} \log \cZ_{m}(\alpha).
\]
For \(\alpha\in\cP^+(A)\), define a law \(\mu_{m}^\alpha\) on \(\Omega_m\) by
\[
    \mu_{m}^\alpha(\bomega)
    \defeq
    \frac{1}{\cZ_{m}(\alpha)}
    \exp(U_{m}(\bomega))
    \prod_{j=1}^m\alpha(\omega_j),
\]
which is exchangeable since \(\Omega_m\) and \(U_m\) are permutation-invariant.

The following properties are possessed by some particular choices of $U_m$. We first derive consequences of these properties and later verify them for our specific choice of $U_m$. More precisely, in \Cref{sec:optimized-pressure}, we verify \ref{cond:mu-alpha-in-cN} for antiferromagnetic models using the Lorentzian property of the partition function and verify \ref{cond:energy-deletion} by a simple counting argument.
\begin{enumerate}[(P1)]
\item For \(m\ge1\) and \(\alpha\in\cP^+(A)\), \(\mu_{m}^\alpha\in \cN_m\).
\label{cond:mu-alpha-in-cN}

\item The energy function satisfies the deletion identity: for \(m\ge2\) and \(\bomega\in\Omega_m\),
\[
    \sum_{j=1}^m U_{m-1}(\bomega_{[m]\setminus j}) = (m-2)U_{m}(\bomega).
\]
\label{cond:energy-deletion}
\end{enumerate}

\begin{lemma}\label{abstract-Gibbs-variational-formula}
Assume \ref{cond:mu-alpha-in-cN}. Then for \(m\ge1\),
\begin{equation}\label{eq:abstract-Gibbs-variational-formula}
    F_m = \sup_{\mu\in\cN_m}
    \biggl\{ \Ex_\mu U_{m}(X) + \Ent(X) - \sum_{j=1}^m \Ent(X_j) \biggr\}.
\end{equation}
\end{lemma}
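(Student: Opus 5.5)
We prove the two inequalities separately. Throughout, write \(\Phi(\mu)\defeq \Ex_\mu U_m(X)+\Ent(X)-\sum_{j}\Ent(X_j)\) for \(\mu\in\cN_m\).

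\emph{Upper bound \(\sup_{\mu\in\cN_m}\Phi(\mu)\le F_m\).} Fix \(\mu\in\cN_m\) and let \(\alpha\defeq\mu_1\) be its one-coordinate marginal. By exchangeability, \(\mu_j=\alpha\) for every \(j\), so \(\sum_j\Ent(X_j)=-\Ex_\mu\sum_j\log\alpha(X_j)\). Hence
\[
    \Phi(\mu)=\Ex_\mu\Bigl[U_m(X)+\sum_{j=1}^m\log\alpha(X_j)\Bigr]+\Ent(X).
\]
Apply the Gibbs variational formula (\Cref{Gibbs-variational-formula}) on \(\Omega_m\) with \(f(\bomega)=U_m(\bomega)+\sum_j\log\alpha(\omega_j)\). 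This gives \(\Phi(\mu)\le\log\cZ_m(\alpha)\le F_m\).

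The one subtlety is that \(\alpha\) may vanish on some letters, so it need not lie in \(\cP^+(A)\). Two fixes are available:
\begin{itemize}
    \item Restrict \(f\) to the subset of \(\Omega_m\) avoiding those letters, where it is finite. The Gibbs formula applies there, and the resulting partial sum is at most \(\lim_{\epsilon\to0}\cZ_m(\alpha_\epsilon)\le e^{F_m}\), where \(\alpha_\epsilon\defeq(1-\epsilon)\alpha+\epsilon\cdot\mathrm{unif}\in\cP^+(A)\).
    \item Equivalently, replace \(\log\alpha\) by \(\log\alpha_\epsilon\). Since \(-\Ex\log\alpha_\epsilon(X_j)\to\Ent(\alpha)\), the same bound follows in the limit.
\end{itemize}
Either way, we get \(\Phi(\mu)\le F_m\).

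\emph{Lower bound \(F_m\le\sup_{\mu\in\cN_m}\Phi(\mu)\).} Fix \(\alpha\in\cP^+(A)\) and take \(\mu=\mu_m^\alpha\), which lies in \(\cN_m\) by \ref{cond:mu-alpha-in-cN}. Since \(\mu_m^\alpha\) is the Gibbs law for \(f_\alpha(\bomega)=U_m(\bomega)+\sum_j\log\alpha(\omega_j)\), \Cref{Gibbs-variational-formula} gives
\[
    \log\cZ_m(\alpha)=\Ex_\mu U_m+\sum_{j}\Ex_\mu\log\alpha(X_j)+\Ent(X).
\]
Now \(-\Ex_\mu\log\alpha(X_j)=\Ent(\mu_j)+\D{\mu_j}{\alpha}\ge\Ent(X_j)\), so \(\sum_j\Ex_\mu\log\alpha(X_j)\le-\sum_j\Ent(X_j)\). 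Therefore \(\log\cZ_m(\alpha)\le\Phi(\mu)\). Taking the supremum over \(\alpha\in\cP^+(A)\) gives \(F_m\le\sup_{\mu\in\cN_m}\Phi(\mu)\).

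\emph{Main obstacle.} Neither direction is deep, since both are Gibbs's inequality combined with the cross-entropy bound. Note that \(-\sum_j\Ent(X_j)\) is the maximum of \(\sum_j\Ex\log\beta(X_j)\) over laws \(\beta\) on \(A\), attained at \(\beta=\mu_1\). The only delicate point is the boundary issue in the upper bound, when \(\mu_1\) is not strictly positive. The approximation argument above handles it, using continuity of \(\cZ_m\) in \(\alpha\) on the closed simplex.
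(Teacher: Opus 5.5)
Your proof is correct. The lower bound \(F_m\le\sup_{\mu\in\cN_m}\{\cdots\}\) is the same as the paper's final step: the Gibbs identity for \(\mu_m^\alpha\), then \(-\Ex_\mu\log\alpha(X_j)=\Ent(X_j)+\D{\mu_j}{\alpha}\), then (P1).

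The upper bound is where you take a different route. The paper first rewrites \(F_m\) as a supremum over all laws \(\mu\in\cL_m\) by exchanging the suprema over \(\alpha\) and \(\mu\). It then evaluates the inner supremum over \(\alpha\) as \(-m\Ent(\bar{\mu}_1)\), where \(\bar{\mu}_1\) is the averaged one-site marginal, using the same \(\epsilon\)-perturbation you use at the boundary. Next it symmetrizes \(\mu\), using concavity of entropy, to obtain the intermediate identity \(F_m=\sup_{\mu\in\cE_m}\{\cdots\}\). The bound over \(\cN_m\) then follows from \(\cN_m\subseteq\cE_m\).

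You instead use the exchangeability that every \(\mu\in\cN_m\) already has. You plug \(\alpha=\mu_1\), or its positive perturbation \(\alpha_\epsilon\), straight into the Gibbs inequality. This avoids both the exchange of suprema and the symmetrization. Your argument is shorter and suffices for the lemma and for its later use in the monotonicity proposition.

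The paper's detour gives a bit more. It shows that the averaged-marginal functional has the same supremum \(F_m\) over all of \(\cL_m\) and over \(\cE_m\). This makes explicit that restricting first to exchangeable laws, and then to Lorentzian ones via (P1), loses nothing.

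Both of your boundary fixes are valid. In the second, \(\D{\mu_1}{\alpha_\epsilon}\le-\log(1-\epsilon)\to0\), so the Gibbs bound at \(\alpha_\epsilon\) passes to the limit. In the first, words containing a letter outside the support of \(\mu_1\) contribute zero, so your restricted sum is exactly the continuous extension of \(\cZ_m\) to the closed simplex at \(\alpha=\mu_1\).
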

\begin{proof}
First, we represent \(F_m\) as a supremum taken over \(\cL_m\). Let \(\alpha\in\cP^+(A)\). Then applying the Gibbs variational formula,~\Cref{Gibbs-variational-formula}, to
\[
    f^\alpha(\bomega) \defeq U_m(\bomega) + \sum_{j=1}^m \log \alpha(\omega_j)
\]
gives
\[
    \log \sum_{\bomega\in\Omega_m} \exp(U_m(\bomega)) \prod_{j=1}^m \alpha(\omega_j)
    = \sup_{\mu\in\cL_m} \biggl\{ \Ex_\mu U_m(X) + \sum_{j=1}^m \Ex_\mu \log \alpha(X_j) + \Ent(X) \biggr\}.
\]
After taking the supremum over \(\alpha\in\cP^+(A)\), the left-hand side equals \(F_m\) by definition, so
\begin{equation}\label{eq:abstract-Gibbs-variational-formula-Lm}
    F_m = \sup_{\mu\in\cL_m}
    \biggl\{ \Ex_\mu U_{m}(X) + \sup_{\alpha\in\cP^+(A)} \sum_{j=1}^m \Ex_\mu \log \alpha(X_j) + \Ent(X) \biggr\}.
\end{equation}

Second, we prove the analogue of \eqref{eq:abstract-Gibbs-variational-formula} with \(\cN_m\) replaced by \(\cE_m\). For \(\mu\in\cL_m\), let \(\bar{\mu}_1\) be its average one-site marginal:
\[
    \bar{\mu}_1(a) \defeq \frac{1}{m} \sum_{j=1}^m \Pr_{X\sim\mu} (X_j=a),
    \qquad a\in A.
\]
Then, for \(\alpha\in\cP^+(A)\),
\[
    \sum_{j=1}^m \Ex_\mu \log \alpha(X_j) = m \sum_{a\in A} \bar{\mu}_1(a) \log \alpha(a).
\]
Taking the supremum over \(\alpha\in\cP^+(A)\) gives
\begin{equation}\label{eq:sup-Pplus-sum-expectation-log}
    \sup_{\alpha\in\cP^+(A)} \sum_{j=1}^m \Ex_\mu \log \alpha(X_j)
    = m \sup_{\alpha\in\cP^+(A)} \sum_{a\in A} \bar{\mu}_1(a) \log \alpha(a)
    = m \sum_{a\in A} \bar{\mu}_1(a) \log \bar{\mu}_1(a)
    = -m \Ent(\bar{\mu}_1).
\end{equation}
To justify the second equality, note that \(\bar{\mu}_1 \ll \alpha\) for every \(\alpha\in\cP^+(A)\). Then, nonnegativity of relative entropy yields
\[
    \sum_{a\in A} \bar{\mu}_1(a) \log \bar{\mu}_1(a) - \sum_{a\in A} \bar{\mu}_1(a) \log \alpha(a)
    = \D{\bar{\mu}_1}{\alpha} \ge 0,
\]
where equality occurs if \(\alpha=\bar{\mu}_1\in \cP^+(A)\). Thus, if \(\bar{\mu}_1\in\cP^+(A)\),
\begin{equation}\label{eq:sup-Pplus-bar-mu}
    \sup_{\alpha\in\cP^+(A)} \sum_{a\in A} \bar{\mu}_1(a) \log \alpha(a)
    = \sum_{a\in A} \bar{\mu}_1(a) \log \bar{\mu}_1(a).
\end{equation}
Even if \(\bar{\mu}_1\notin\cP^+(A)\) in general, let \(u\) be the uniform law on \(A\) and consider \(\alpha_\epsilon \defeq (1-\epsilon) \bar{\mu}_1 + \epsilon u \in \cP^+(A)\) for \(0<\epsilon<1\). Then
\[
    \sum_{a\in A} \bar{\mu}_1(a)\log\alpha_\epsilon(a)
    \to
    \sum_{a\in A} \bar{\mu}_1(a)\log \bar{\mu}_1(a)
    \quad\text{as}\quad
    \epsilon\to0,
\]
with the convention \(0\log0=0\). This implies \eqref{eq:sup-Pplus-bar-mu}, whence \eqref{eq:sup-Pplus-sum-expectation-log} follows.
Substituting \eqref{eq:sup-Pplus-sum-expectation-log} into \eqref{eq:abstract-Gibbs-variational-formula-Lm} gives
\begin{equation}\label{eq:abstract-Gibbs-variational-formula-Lm-2}
    F_m = \sup_{\mu\in\cL_m} \{ \Ex_\mu U_{m}(X) + \Ent(X) - m \Ent(\bar{\mu}_1) \}.
\end{equation}

For \(\mu\in\cL_m\), let \(\mu^\sym\) be the symmetrization of \(\mu\) over all permutations of the \(m\) sites, i.e.,
\[
    \mu^\sym \defeq \frac{1}{m!} \sum_{\pi\in\mathfrak{S}_m} \mu^{(\pi)},
    \qquad
    \mu^{(\pi)}(\omega_1,\dots,\omega_m) \defeq \mu(\omega_{\pi(1)},\dots,\omega_{\pi(m)}),
\]
where \(\mathfrak{S}_m\) is the symmetric group on \([m]\).
Then \(\mu^\sym\in\cE_m\), and permutation invariance of  \(U_{m}\) gives \(\Ex_{\mu^\sym} U_{m} = \Ex_\mu U_{m}\). By concavity of entropy,
\[
    \Ent(\mu^{\sym})
    \ge \frac{1}{m!} \sum_{\pi\in\mathfrak{S}_m}\Ent(\mu^{(\pi)})
    = \Ent(\mu).
\]
Moreover, \((\mu^\sym)_1 = \bar{\mu}_1\), so \(\Ent((\mu^\sym)_1) = \Ent(\bar{\mu}_1)\). Thus, symmetrization does not decrease the objective in \eqref{eq:abstract-Gibbs-variational-formula-Lm-2}. If \(\mu\) is exchangeable and \(X\sim\mu\), then \(\Ent(\bar{\mu}_1) = \Ent(X_j)\) for all \(j\in[m]\). Hence,
\begin{equation}\label{eq:abstract-Gibbs-variational-formula-Em}
    F_m = \sup_{\mu\in\cE_m} \biggl\{ \Ex_\mu U_{m}(X) + \Ent(X) - \sum_{j=1}^m \Ent(X_j) \biggr\},
\end{equation}
which is the desired analogue of \eqref{eq:abstract-Gibbs-variational-formula}.

Finally, we show \eqref{eq:abstract-Gibbs-variational-formula}.
Fix \(\alpha\in\cP^+(A)\) and let $X\sim\mu_m^\alpha$. Since \(\mu_m^\alpha\) is the Gibbs law associated with~\(f^\alpha\), the Gibbs variational formula for \(f^\alpha\), i.e.,~\Cref{Gibbs-variational-formula} with $f=f^\alpha$, gives
\begin{align*}
    &\Ex_{\mu_m^\alpha} \biggl( U_{m}(X) + \sum_{j=1}^m \log \alpha(X_j) \biggr) + \Ent(\mu_m^\alpha)
    \\&\qquad = \log\sum_{\bomega\in\Omega_m}e^{f^{\alpha}(\bomega)} 
    = \log \sum_{\bomega\in\Omega_m} \exp(U_{m}(\bomega)) \prod_{j=1}^m \alpha(\omega_j)
    = \log \cZ_{m}(\alpha).
\end{align*}
Adding \(\sum_{j=1}^m \D{(\mu_m^\alpha)_j}{\alpha}\) to both sides and using \(\D{(\mu_m^\alpha)_j}{\alpha} = \Ex_{\mu_m^\alpha} \log\frac{(\mu_m^\alpha)_j(X_j)}{\alpha(X_j)}\) gives
\[
    \Ex_{\mu_m^\alpha} U_{m}(X) + \Ent(\mu_m^\alpha) - \sum_{j=1}^m \Ent((\mu_m^\alpha)_j)
    = \log \cZ_{m}(\alpha) + \sum_{j=1}^m \D{(\mu_m^\alpha)_j}{\alpha}
    \ge \log \cZ_{m}(\alpha).
\]
Taking the supremum over \(\alpha\in\cP^+(A)\) and using \(\mu_m^\alpha\in\cN_m\) from \ref{cond:mu-alpha-in-cN} gives
\[
    \sup_{\mu\in\cN_m} \biggl\{ \Ex_\mu U_{m}(X) + \Ent(X) - \sum_{j=1}^m \Ent(X_j) \biggr\}
    \ge \sup_{\alpha\in\cP^+(A)} \log \cZ_{m}(\alpha)
    = F_m.
\]
The reverse inequality follows from \(\cN_m\subseteq\cE_m\) and \eqref{eq:abstract-Gibbs-variational-formula-Em}, which completes the proof.
\end{proof}

Using \Cref{abstract-Gibbs-variational-formula,entropy-deletion}, together with closure properties of {\cLor} laws in \Cref{Nm-closure}, we derive a monotonicity principle for \(F_m\) given the aforementioned properties.

\begin{proposition}\label{abstract-pressure-monotonicity}
Assume \ref{cond:mu-alpha-in-cN} and \ref{cond:energy-deletion}. Then for \(m\ge2\), \(mF_{m-1}\ge (m-2)F_m\).
\end{proposition}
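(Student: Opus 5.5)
The plan is to compare the variational formulas for \(F_m\) and \(F_{m-1}\) from \Cref{abstract-Gibbs-variational-formula}, evaluating the one for \(F_{m-1}\) at the deleted marginals of a near-optimal law for \(F_m\).

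Fix \(\mu\in\cN_m\) and let \(X\sim\mu\). For each \(j\in[m]\), the marginal law of \(X_{[m]\setminus j}\) belongs to \(\cN_{m-1}\) by \Cref{Nm-closure}\ref{Nm-closure-deletion}. Since this law is admissible in \eqref{eq:abstract-Gibbs-variational-formula} at level \(m-1\), we get
\[
    F_{m-1} \ge \Ex U_{m-1}(X_{[m]\setminus j}) + \Ent(X_{[m]\setminus j}) - \sum_{i\ne j}\Ent(X_i).
\]

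Summing over \(j\in[m]\) gives
\[
    mF_{m-1} \ge \Ex\sum_{j=1}^m U_{m-1}(X_{[m]\setminus j}) + \sum_{j=1}^m\Ent(X_{[m]\setminus j}) - (m-1)\sum_{i=1}^m \Ent(X_i).
\]
The two sums on the right are handled separately.

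For the energy term, \ref{cond:energy-deletion} applies pointwise on \(\Omega_m\), which contains the support of \(\mu\). It turns the energy sum into exactly \((m-2)\Ex_\mu U_m(X)\).

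For the entropy term, \Cref{entropy-deletion} gives
\[
    \sum_j \Ent(X_{[m]\setminus j}) \ge (m-2)\Ent(X) + \sum_j\Ent(X_j).
\]
Subtracting \((m-1)\sum_i\Ent(X_i)\) therefore leaves at least
\[
    (m-2)\Bigl(\Ent(X)-\sum_j\Ent(X_j)\Bigr).
\]

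Combining the two bounds yields
\[
    mF_{m-1}\ge (m-2)\Bigl(\Ex_\mu U_m(X)+\Ent(X)-\sum_j\Ent(X_j)\Bigr).
\]
Since \(m-2\ge0\), we may take the supremum over \(\mu\in\cN_m\) on the right. By \Cref{abstract-Gibbs-variational-formula} at level \(m\), this supremum equals \(F_m\), which proves \(mF_{m-1}\ge(m-2)F_m\).

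A few edge cases need checking:
\begin{itemize}
\item When \(m=2\), the right-hand side is \(0\), and the claim \(F_1\ge0\) should also fall out of the computation. In this case \Cref{entropy-deletion} holds with equality, and the argument goes through unchanged.
\item For \(m-1=1\), \Cref{abstract-Gibbs-variational-formula} is still valid, since it is stated for \(m\ge1\).
\item The values \(F_m\) are finite because the \(\Omega_m\) are nonempty.
\end{itemize}

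The only genuinely delicate ingredient is the entropy inequality, and it is supplied by \Cref{entropy-deletion}. The remaining work is bookkeeping: making sure the coefficient \(m-1\) in front of \(\sum_i\Ent(X_i)\) is correct. Each \(i\) is omitted by exactly one deletion, so it appears in \(m-1\) of the \(m\) terms.
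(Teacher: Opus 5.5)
Your proposal is correct and follows the paper's proof essentially line by line: you apply \Cref{abstract-Gibbs-variational-formula} at level \(m-1\) to each deleted marginal (admissible by \Cref{Nm-closure}), sum over \(j\), and then invoke \ref{cond:energy-deletion} and \Cref{entropy-deletion}. The only cosmetic difference is that you take the supremum over \(\mu\in\cN_m\) directly, which is valid since \(m-2\ge0\) and \(F_m\) is finite, whereas the paper fixes an \(\epsilon\)-near-optimal \(\mu\) and lets \(\epsilon\to0\).
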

\begin{proof}
Let \(\epsilon>0\) be arbitrary. Using \Cref{abstract-Gibbs-variational-formula}, choose \(\mu\in\cN_m\) such that
\begin{equation}\label{eq:F-epsilon-optimizer}
    F_m - \epsilon \le \Ex_{\mu} U_{m}(X) + \Ent(X) - \sum_{j=1}^m \Ent(X_j).
\end{equation}
For each \(j\in[m]\), \Cref{Nm-closure} implies that the law of \(X_{[m]\setminus j}\) belongs to \(\cN_{m-1}\). Thus, applying \Cref{abstract-Gibbs-variational-formula} with \(m\) replaced by \(m-1\) gives
\[
    F_{m-1} \ge \Ex_{\mu} U_{m-1}(X_{[m]\setminus j}) + \Ent(X_{[m]\setminus j}) - \sum_{k\neq j} \Ent(X_k).
\]
Summing this inequality over \(j\in[m]\) yields
\begin{align*}
    m F_{m-1}
    &\ge \Ex_{\mu} \sum_{j=1}^m U_{m-1}(X_{[m]\setminus j}) + \sum_{j=1}^m \Ent(X_{[m]\setminus j}) - (m-1) \sum_{j=1}^m \Ent(X_j).
\end{align*}
Using \ref{cond:energy-deletion} in this and applying the entropy inequality in \Cref{entropy-deletion} gives
\begin{align*}
    m F_{m-1}&\geq (m-2) \Ex_{\mu} U_{m}(X) + \sum_{j=1}^m \Ent(X_{[m]\setminus j}) - (m-1) \sum_{j=1}^m \Ent(X_j)
    \tag{by \ref{cond:energy-deletion}}
    \\&\ge (m-2) \Ex_{\mu} U_{m}(X) + \biggl( (m-2)\Ent(X) + \sum_{j=1}^m \Ent(X_j) \biggr) - (m-1) \sum_{j=1}^m \Ent(X_j)
    \tag{by \Cref{entropy-deletion}}
    \\&= (m-2) \biggl( \Ex_{\mu} U_{m}(X) + \Ent(X) - \sum_{j=1}^m \Ent(X_j) \biggr)
    \\&\ge (m-2) (F_m - \epsilon).
    \tag{by \eqref{eq:F-epsilon-optimizer}}
\end{align*}
Letting \(\epsilon\to0\) proves the claim.
\end{proof}

\section{Clique minimization for antiferromagnetic models}
\label{sec:antiferromagnetic}

We are ready to prove \Cref{antiferromagnetic-inhomo-clique-minimizing}.
Throughout this section, we fix an antiferromagnetic model $H$ on a finite nonempty spin set \(I\). Recall that \(Z_G\) and \(\Zi_G\) denote the vertex-homogeneous and vertex-inhomogeneous partition functions, respectively, i.e.,
\[
    \Zi_G(\blambda^{(v)}:v\in V(G))
    =
    \sum_{\bsigma\in I^{V(G)}}
    \prod_{uv\in E(G)} H(\sigma_u,\sigma_v)
    \prod_{v\in V(G)} \lambda^{(v)}_{\sigma_v},
\]
and $Z_G(\blambda)$ is the function $\Zi_G$ evaluated with $\blambda^{(v)}=\blambda$ for all $v\in V(G)$.
For \(\sigma\in I\), let \(\bfm_\sigma\) denote the row of \(H\) indexed by \(\sigma\). By continuity with respect to the fugacity vectors, it suffices to consider the case when every fugacity vector is entrywise positive.

We prove by induction on \(n=v(G)\) that \eqref{eq:inhomo-clique-minimizing} holds for every antiferromagnetic model \(H\) and every collection of positive fugacity vectors. The cases \(n=0\) and \(n=1\) are immediate. For the induction step, we may assume that \(n\ge2\) and \(G\) is connected, since both sides of \eqref{eq:inhomo-clique-minimizing} factor over connected components of \(G\). In particular, we may assume that \(G\) has no isolated vertices. If \(H\) is the zero matrix, both sides vanish. Otherwise, delete the zero rows and corresponding columns of \(H\) and restrict the fugacity vectors to the remaining spins. This leaves both sides unchanged and produces a nonempty antiferromagnetic model with no zero rows. Therefore, we may assume that \(H\) has no zero rows.



By \Cref{antiferromagnetic-irreducible}, \(H\) is irreducible. Hence, by the Perron--Frobenius theorem (\Cref{Perron--Frobenius}), \(H\) has an eigenvalue \(\rho>0\) with a corresponding entrywise-positive eigenvector \(\br\). Normalize \(\br\) so that \(\norm{\br}_2=1\). Since \(H\) is antiferromagnetic, \(\rho\) is its unique positive eigenvalue. Then for \(\epsilon>0\), \(H^{(\epsilon)} \defeq H+\epsilon \br\br^\top\) is entrywise positive. We claim that \(H^{(\epsilon)}\) is antiferromagnetic. Since \(H\) is symmetric, choose an orthonormal eigenbasis \(\cB\) for \(H\) containing \(\br\). Then for \(\bu\in\cB\setminus\{\br\}\), \(H^{(\epsilon)} \bu = H \bu\), while \(H^{(\epsilon)} \br = H \br + \epsilon \br = (\rho + \epsilon) \br\). Thus, the spectrum of \(H^{(\epsilon)}\) is the same as that of \(H\) except that the eigenvalue \(\rho\) is replaced by \(\rho + \epsilon\). Since \(\rho\) is the unique positive eigenvalue of \(H\), it follows that \(H^{(\epsilon)}\) is antiferromagnetic.
Both sides of \eqref{eq:inhomo-clique-minimizing} are continuous with respect to the entries of \(H\), so proving the inequality for \(H^{(\epsilon)}\) and then letting \(\epsilon\to0\) concludes its proof. Hence, in what follows in this section, we assume that \(H\) is entrywise positive.

\subsection{Inductive localization and the dual set}\label{sec:membership}

Next, we reduce the problem to a membership problem which depends only on a maximum degree vertex and its neighbors.
As explained in \Cref{sec:proof-strategy}, this part follows the argument in our previous work \cite{lee2026lower}.
We also note that the assumption that \(H\) is antiferromagnetic is not used in this subsection.

Fixing a vertex \(w\) of maximum degree \(\Delta\) and conditioning on its spin, we obtain a recurrence relation for the vertex-inhomogeneous partition functions (\Cref{inhomo-partition-function-recurrence}).
Applying the induction hypothesis and canceling from the desired inequality the common terms corresponding to vertices in \(V(G)\setminus(\{w\}\cup N(w))\), we reduce the problem to a local inequality on \(w\) and its neighbors, as given in \eqref{eq:goal-Phi-fraction}.
To eliminate the dependence on \(\blambda^{(w)}\) in this local inequality, we consider its equivalent form in \eqref{eq:goal-Phi-fraction-in-SDelta}, which is a membership problem for the set \(\cS_\Delta\), the dual set associated with the normalized clique partition function \((Z_{\Delta+1})^{1/(\Delta+1)}\).
Utilizing log-convexity of \(\cS_\Delta\) (\Cref{SDelta-log-convex}), we reduce this problem to a local membership problem (\Cref{SDelta-membership}), which we prove in the subsequent subsection.

We start by establishing a recurrence relation for the vertex-inhomogeneous partition functions, which generalizes the well-known identity $i(G) = i(G\setminus w) + i(G\setminus(N(w)\cup\{w\}))$ for independent set counts.

\begin{lemma}\label{inhomo-partition-function-recurrence}
Let \(\blambda^{(v)} \in (\RRp)^I\) for each \(v \in V(G)\). For \(w\in V(G)\),
\begin{equation}\label{eq:inhomo-partition-function-recurrence}
    \Zi_G(\blambda^{(v)} : v\in V(G)) = \sum_{\sigma\in I} \lambda^{(w)}_\sigma \Zi_{G\setminus w}(\bmu^{(v,\sigma)} : v\in V(G\setminus w)),
\end{equation}
where for \(v\in V(G\setminus w)\), \(\bmu^{(v,\sigma)}\in (\RRp)^I\) is defined as \(\blambda^{(v)}\) if \(v\notin N(w)\) and \(\bfm_\sigma\odot\blambda^{(v)}\) if \(v\in N(w)\).
\end{lemma}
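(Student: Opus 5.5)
The plan is to expand the definition of \(\Zi_G\) by splitting the sum over \(\bsigma\in I^{V(G)}\) according to the spin \(\sigma\defeq\sigma_w\) assigned to \(w\). This is the same mechanism as in the proof of \Cref{partition-function-derivative}, where fixing the spin of one clique vertex multiplied the fugacities of the remaining vertices by a row of \(H\). Concretely, every \(\bsigma\in I^{V(G)}\) corresponds bijectively to a pair \((\sigma,\bsigma')\) with \(\sigma\in I\) and \(\bsigma'\in I^{V(G\setminus w)}\), where \(\bsigma'\) is the restriction of \(\bsigma\).

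Next, I will factor the weight of each configuration. The edge set \(E(G)\) is the disjoint union of \(E(G\setminus w)\) and the edges \(wv\) with \(v\in N(w)\). The vertex product splits into \(\lambda^{(w)}_\sigma\) and the product over \(v\in V(G\setminus w)\). Hence the summand equals
\[
\lambda^{(w)}_\sigma\prod_{uv\in E(G\setminus w)}H(\sigma'_u,\sigma'_v)\prod_{v\notin N(w)}\lambda^{(v)}_{\sigma'_v}\prod_{v\in N(w)}H(\sigma,\sigma'_v)\lambda^{(v)}_{\sigma'_v}.
\]

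Then I will absorb the edge factor \(H(\sigma,\sigma'_v)\) into the fugacity of each neighbour \(v\). Since \(H(\sigma,\sigma'_v)\lambda^{(v)}_{\sigma'_v}=(\bfm_\sigma\odot\blambda^{(v)})_{\sigma'_v}\), the displayed product is exactly the summand of \(\Zi_{G\setminus w}(\bmu^{(v,\sigma)}:v\in V(G\setminus w))\) at \(\bsigma'\). Summing over \(\bsigma'\) for each fixed \(\sigma\), and then over \(\sigma\in I\), gives \eqref{eq:inhomo-partition-function-recurrence}.

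The only point needing care is that \(\bmu^{(v,\sigma)}\) lies in \((\RRp)^I\). This holds because \(H\) has been reduced to the entrywise-positive case, so each \(\bfm_\sigma\) is positive. In any case the identity itself is purely algebraic, and the obstacle is essentially just bookkeeping.
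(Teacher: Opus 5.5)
Your proposal is correct and follows the paper's proof essentially verbatim: you partition the configurations by the spin \(\sigma\) at \(w\), split off \(\lambda^{(w)}_\sigma\) and the edges \(wv\) for \(v\in N(w)\), and absorb each \(H(\sigma,\cdot)\) into the neighbour's fugacity as \((\bfm_\sigma\odot\blambda^{(v)})\). Your remark that \(\bmu^{(v,\sigma)}\in(\RRp)^I\) follows from the standing entrywise-positivity reduction of \(H\) also matches the paper's setup at that point.
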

\begin{proof}
By definition,
\[
    \Zi_G(\blambda^{(v)}:v\in V(G))
    = \sum_{\bfeta\colon V(G)\to I}
    \prod_{uv\in E(G)} H(\eta_u,\eta_v)
    \prod_{v\in V(G)} \lambda^{(v)}_{\eta_v}.
\]
We partition the sum according to the spin assigned to \(w\). Fix \(\sigma\in I\), and consider an assignment \(\bfeta\colon V(G)\to I\) satisfying \(\eta_w=\sigma\). Let \(\btau\) denote its restriction to \(V(G\setminus w)\). Then
\begin{align*}
    \prod_{uv\in E(G)}H(\eta_u,\eta_v)
    \prod_{v\in V(G)} \lambda^{(v)}_{\eta_v}
    &= \lambda^{(w)}_\sigma
    \Biggl(\, \prod_{uv\in E(G\setminus w)}H(\tau_u,\tau_v) \Biggr)
    \Biggl(\, \prod_{v\in N(w)}H(\sigma,\tau_v) \Biggr)
    \Biggl(\, \prod_{v\in V(G\setminus w)} \lambda^{(v)}_{\tau_v} \Biggr)
    \\&= \lambda^{(w)}_\sigma
    \Biggl(\, \prod_{uv\in E(G\setminus w)}H(\tau_u,\tau_v) \Biggr)
    \Biggl(\, \prod_{v\in V(G\setminus w)} \mu^{(v,\sigma)}_{\tau_v} \Biggr),
\end{align*}
because
\[
    \mu^{(v,\sigma)}_{\tau_v}
    = \begin{cases}
        \lambda^{(v)}_{\tau_v}, & v\notin N(w),\\
        H(\sigma,\tau_v)\lambda^{(v)}_{\tau_v}, & v\in N(w),
    \end{cases}
\]
and the second case is exactly
\[
    (\bfm_\sigma\odot\blambda^{(v)})_{\tau_v} = H(\sigma,\tau_v) \lambda^{(v)}_{\tau_v}.
\]
Therefore, summing first over \(\btau\colon V(G\setminus w)\to I\) and then over \(\sigma\in I\) gives the desired recurrence.
\end{proof}

Let \(w\in V(G)\) be a vertex of maximum degree \(\Delta\ge1\). Applying the induction hypothesis to each summand in \eqref{eq:inhomo-partition-function-recurrence} reduces the induction step to proving
\begin{align}\label{eq:induction}
     \sum_{\sigma\in I} \lambda^{(w)}_\sigma
    \prod_{v\in V(G\setminus w)} Z_{f_v+1}(\bmu^{(v,\sigma)})^{1/(f_v+1)}
    \ge \prod_{v\in V(G)} Z_{d_v+1}(\blambda^{(v)})^{1/(d_v+1)},
\end{align}
where for \(v\in V(G\setminus w)\), \(f_v\) is the degree of vertex \(v\) in \(G\setminus w\). If \(v\in V(G\setminus w)\setminus N(w)\), then \(f_v=d_v\) and \(\bmu^{(v,\sigma)} = \blambda^{(v)}\), so the corresponding factors on both sides cancel. If \(v\in N(w)\), then \(f_v=d_v-1\). Hence, \eqref{eq:induction} reduces to
\[
    \sum_{\sigma\in I} \lambda^{(w)}_\sigma
    \prod_{v\in N(w)} Z_{d_v}(\bmu^{(v,\sigma)})^{1/d_v}
    \ge \prod_{v\in \{w\}\cup N(w)} Z_{d_v+1}(\blambda^{(v)})^{1/(d_v+1)}.
\]
For \(d\ge1\), let \(\Phi_d(\blambda)\defeq Z_d(\blambda)^{1/d}\). By definition, \(\bmu^{(v,\sigma)} = \bfm_\sigma\odot\blambda^{(v)}\) for \(v\in N(w)\) and \(d_w=\Delta\), so the preceding inequality is equivalent to
\begin{equation}\label{eq:goal-Phi-fraction}
    \sum_{\sigma\in I} \lambda^{(w)}_\sigma
    \prod_{v\in N(w)} \frac{\Phi_{d_v}(\bfm_\sigma\odot\blambda^{(v)})}{\Phi_{d_v+1}(\blambda^{(v)})}
    \ge \Phi_{\Delta+1}(\blambda^{(w)}).
\end{equation}
Indeed, this generalizes~\eqref{eq:simplified_tangent} mentioned in the introduction. Define the \emph{dual set}
\[
    \cS_\Delta \defeq
    \{\bx\in(\RRp)^I :
    \forall \bz\in(\RRp)^I,\,
    \bz \cdot \bx \ge \Phi_{\Delta+1}(\bz)\}.
\]
Then \eqref{eq:goal-Phi-fraction} holds for every \(\blambda^{(w)}\in(\RRp)^I\) if and only if
\begin{equation}\label{eq:goal-Phi-fraction-in-SDelta}
    \Biggl(\,\prod_{v\in N(w)} \frac{\Phi_{d_v}(\bfm_\sigma \odot \blambda^{(v)})}{\Phi_{d_v+1}(\blambda^{(v)})} : \sigma\in I \Biggr)
    \in \cS_\Delta.
\end{equation}
Observe that the vector in \eqref{eq:goal-Phi-fraction-in-SDelta} can be interpreted as a coordinatewise product of \(\abs{N(w)}\) vectors, one for each neighbor of \(w\).
Thus, if \(\cS_\Delta\) is closed under coordinatewise geometric mean, then we can reduce \eqref{eq:goal-Phi-fraction-in-SDelta} to a separate membership statement for each neighbor of \(w\).
The next lemma establishes the required log-convexity.
The proof is essentially identical to that of Lemma~3.4 in \cite{lee2026lower} and uses no structural information about \(H\), but only the fact that \(Z_{\Delta+1}\) is a polynomial with nonnegative coefficients.
For \(\bx\in(\RRp)^I\), let \(\log\bx\defeq (\log x_\sigma : \sigma\in I)\).

\begin{lemma}\label{SDelta-log-convex}
\(\log\cS_\Delta \defeq \{\log \bx : \bx \in \cS_\Delta\}\) is convex.
\end{lemma}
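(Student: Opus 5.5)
We must show: if \(\bx,\by\in\cS_\Delta\) and \(t\in[0,1]\), then the coordinatewise weighted geometric mean \(\bx^{t}\odot\by^{1-t}\defeq(x_\sigma^t y_\sigma^{1-t}:\sigma\in I)\) lies in \(\cS_\Delta\). The plan is to exploit only that \(Z_{\Delta+1}\) is a homogeneous polynomial with nonnegative coefficients, so that \(\Phi_{\Delta+1}\) composed with the exponential map is log-convex. Concretely, write \(Z_{\Delta+1}(\bz)=\sum_{\balpha} c_\balpha \bz^\balpha\) with \(c_\balpha\ge0\). Then \(\bs\mapsto \log Z_{\Delta+1}(e^{\bs})\) is a log-sum-exp of affine functions of \(\bs\), hence convex. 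Equivalently, for \(\bz=\ba^{t}\odot\bfb^{1-t}\), Hölder's inequality gives \(Z_{\Delta+1}(\bz)\le Z_{\Delta+1}(\ba)^{t}Z_{\Delta+1}(\bfb)^{1-t}\), and the same holds for \(\Phi_{\Delta+1}\) after taking \((\Delta+1)\)-th roots.

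The key step is to reformulate membership in \(\cS_\Delta\) in a form compatible with geometric means. By homogeneity of degree one of \(\Phi_{\Delta+1}\), we have \(\bx\in\cS_\Delta\) if and only if \(\sup_{\bz>0}\Phi_{\Delta+1}(\bz)/(\bz\cdot\bx)\le1\). I would rather use a multiplicative dual description. Fix \(\bz\in(\RRp)^I\), and set \(\bu\defeq\bx\), \(\bv\defeq\by\), \(\bw\defeq\bu^t\odot\bv^{1-t}\). We need \(\bz\cdot\bw\ge\Phi_{\Delta+1}(\bz)\). To do this, define \(\bz'\) and \(\bz''\) by choosing them so that \(\bz'\odot\bu\) and \(\bz''\odot\bv\) are both proportional to the vector \(\bz\odot\bw\), and so that \(\bz=\bz'^{\,t}\odot\bz''^{\,1-t}\). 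Explicitly, take \(\bz'\defeq\bz\odot(\bv/\bu)^{1-t}\) and \(\bz''\defeq\bz\odot(\bu/\bv)^{t}\). Then \(\bz'\odot\bu=\bz''\odot\bv=\bz\odot\bw\), so \(\bz'\cdot\bu=\bz''\cdot\bv=\bz\cdot\bw\), and indeed \(\bz'^{\,t}\odot\bz''^{\,1-t}=\bz\).

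With this choice, the membership assumptions give \(\bz\cdot\bw=\bz'\cdot\bu\ge\Phi_{\Delta+1}(\bz')\) and \(\bz\cdot\bw=\bz''\cdot\bv\ge\Phi_{\Delta+1}(\bz'')\). Hence
\[
\bz\cdot\bw\ge\Phi_{\Delta+1}(\bz')^{t}\,\Phi_{\Delta+1}(\bz'')^{1-t}\ge\Phi_{\Delta+1}(\bz'^{\,t}\odot\bz''^{\,1-t})=\Phi_{\Delta+1}(\bz),
\]
where the last inequality is the Hölder (log-convexity) step above. Since \(\bz\) was arbitrary, \(\bw\in\cS_\Delta\). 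This shows that \(\log\cS_\Delta\) is closed under convex combinations.

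The only real content is choosing the auxiliary points \(\bz'\) and \(\bz''\) so that the linear pairings coincide. Once that is done, everything reduces to Hölder's inequality for a polynomial with nonnegative coefficients. I expect no obstacle beyond checking positivity, which holds because all vectors lie in the open positive orthant.
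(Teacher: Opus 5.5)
Your proposal is correct and follows essentially the paper's proof, with the same choice of rescaled auxiliary points, so that the pairings \(\bz'\cdot\bx\) and \(\bz''\cdot\by\) both equal \(\bz\cdot\bw\), and the same appeal to nonnegativity of the coefficients of \(Z_{\Delta+1}\). The only difference is that the paper proves midpoint convexity via Cauchy--Schwarz and then uses closedness of \(\log\cS_\Delta\), whereas you handle arbitrary weights \(t\in[0,1]\) directly via Hölder, which makes the closedness step unnecessary.
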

\begin{proof}
The set \(\cS_\Delta\) is relatively closed in \((\RRp)^I\), and the coordinatewise logarithm is a homeomorphism from \((\RRp)^I\) to \(\RR^I\). Hence, \(\log\cS_\Delta\) is closed, so it suffices to prove midpoint convexity. Fix \(\bx,\by \in \cS_\Delta\), and set \(\bu\defeq ((x_\sigma y_\sigma)^{1/2} : \sigma\in I)\). We show \(\bu\in\cS_\Delta\), which amounts to showing that \(\bz\cdot\bu\ge\Phi_{\Delta+1}(\bz)\) for each \(\bz\in(\RRp)^I\). For \(\bz_1\defeq ((y_\sigma/x_\sigma)^{1/2} z_\sigma : \sigma\in I)\), the assumption \(\bx\in\cS_\Delta\) gives
\[
    \bz\cdot \bu = \bz_1\cdot \bx \ge \Phi_{\Delta+1}(\bz_1).
\]
Similarly, for \(\bz_2\defeq ((x_\sigma/y_\sigma)^{1/2} z_\sigma : \sigma\in I)\), the assumption \(\by\in\cS_\Delta\) gives
\[
    \bz\cdot \bu = \bz_2\cdot \by \ge \Phi_{\Delta+1}(\bz_2).
\]
Taking the geometric mean of these inequalities shows that it suffices to prove \(\Phi_{\Delta+1}(\bz_1) \Phi_{\Delta+1}(\bz_2) \ge \Phi_{\Delta+1}(\bz)^2\), which is equivalent to
\[
    Z_{\Delta+1}(\bz_1) Z_{\Delta+1}(\bz_2) \ge Z_{\Delta+1}(\bz)^2.
\]
Write \(Z_{\Delta+1}(\bz) = \sum_{\balpha} c_{\balpha} \bz^{\balpha}\) in its monomial expansion. Since \(c_{\balpha} \ge 0\) for every \(\balpha\), the Cauchy--Schwarz inequality gives
\[
    Z_{\Delta+1}(\bz_1) Z_{\Delta+1}(\bz_2)
    = \biggl( \sum_{\balpha} c_{\balpha} (\bz_1)^{\balpha} \biggr)
        \biggl( \sum_{\balpha} c_{\balpha} (\bz_2)^{\balpha} \biggr)
    \ge \biggl( \sum_{\balpha} c_{\balpha} ((\bz_1)^{\balpha} (\bz_2)^{\balpha})^{1/2} \biggr)^{\!2}
    = Z_{\Delta+1}(\bz)^2,
\]
where the last equality uses \((\bz_1)_\sigma (\bz_2)_\sigma = z_\sigma^2\) for all \(\sigma\in I\) so that \((\bz_1)^{\balpha} (\bz_2)^{\balpha} = (\bz^{\balpha})^2\) for all \(\balpha\in(\NN_0)^I\).
\end{proof}

By \Cref{SDelta-log-convex}, \Cref{eq:goal-Phi-fraction-in-SDelta} follows from the following.

\begin{lemma}\label{SDelta-membership}
For \(1\le d\le \Delta\) and \(\blambda\in(\RRp)^I\),
\[
    \biggl( \biggl(\frac{\Phi_d(\bfm_\sigma \odot \blambda)}{\Phi_{d+1}(\blambda)}\biggr)^{\!\Delta} : \sigma\in I \biggr)
    \in \cS_\Delta.
\]
\end{lemma}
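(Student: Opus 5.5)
The plan is to recast the membership as the statement that an optimized pressure, in the sense of \Cref{sec:abstract-optimized-pressure}, is nonpositive. I would verify this at clique size \(d+1\) by concavity and then push it up to \(\Delta+1\) with \Cref{abstract-pressure-monotonicity}.

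\textbf{Reformulation.} Fix \(d\le\Delta\) and \(\blambda\in(\RRp)^I\). Write \(g_\sigma\defeq\Phi_d(\bfm_\sigma\odot\blambda)/\Phi_{d+1}(\blambda)>0\), which is positive because \(H\) is entrywise positive. The claim is \(\sum_\sigma z_\sigma g_\sigma^\Delta\ge\Phi_{\Delta+1}(\bz)\) for all \(\bz\in(\RRp)^I\). Both sides are homogeneous of degree one, so I substitute \(z_\sigma=\alpha(\sigma)g_\sigma^{-\Delta}\) and normalize \(\alpha\in\cP^+(A)\) with \(A\defeq I\). The claim then becomes
\[
    Z_{\Delta+1}\bigl(\alpha\odot \bm{g}^{-\Delta}\bigr)\le 1
    \qquad\text{for all } \alpha\in\cP^+(I),
\]
where \(\bm{g}\defeq(g_\sigma:\sigma\in I)\) and powers are taken coordinatewise.

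\textbf{Setting up the abstract framework.} I take \(\Omega_m\defeq I^m\), which is trivially permutation-invariant and deletion-compatible, and the energy
\[
    U_m(\bomega)\defeq\sum_{1\le i<j\le m}\log H(\omega_i,\omega_j)-(m-1)\sum_{j=1}^m\log g_{\omega_j}.
\]
This energy is permutation-invariant, and \(\cZ_m(\alpha)=Z_m(\alpha\odot \bm{g}^{-(m-1)})\). The goal is therefore exactly \(F_{\Delta+1}\le0\).

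Condition (P2) is a counting check. Each pair \(\{i,j\}\) survives in \(m-2\) of the \(m\) deletions. The site term contributes \(-(m-2)(m-1)\sum_j\log g_{\omega_j}\) after summing over deletions, which equals \((m-2)\) times the site term of \(U_m\).

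Condition (P1) holds because the count-generating polynomial of \(\mu^\alpha_m\) is \(\bx\mapsto Z_m(\alpha\odot \bm{g}^{-(m-1)}\odot\bx)/\cZ_m(\alpha)\). By \Cref{H-antiferromagnetic-Z-Lorentzian} and \Cref{Lorentzian-closure}, parts (1) and (2), this polynomial is Lorentzian; the case \(m=1\) is linear. Hence \Cref{abstract-pressure-monotonicity} applies and gives \(mF_{m-1}\ge(m-2)F_m\). For \(m\ge3\) this shows that \(F_{m-1}\le0\) implies \(F_m\le0\).

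\textbf{Base case \(m=d+1\).} By \Cref{partition-function-derivative},
\[
    g_\sigma^{d}=\frac{Z_d(\bfm_\sigma\odot\blambda)}{Z_{d+1}(\blambda)^{d/(d+1)}}=\frac{\partial_\sigma Z_{d+1}(\blambda)}{(d+1)Z_{d+1}(\blambda)^{d/(d+1)}}=\partial_\sigma\Phi_{d+1}(\blambda).
\]
\Cref{H-antiferromagnetic-Z-Lorentzian} says \(\Phi_{d+1}\) is concave, so it lies below its tangent plane at \(\blambda\). Euler's theorem (\Cref{euler-homogeneous}) with degree \(1\) removes the constant term, giving \(\Phi_{d+1}(\bz)\le\sum_\sigma z_\sigma\,\partial_\sigma\Phi_{d+1}(\blambda)=\sum_\sigma z_\sigma g_\sigma^d\). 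Putting \(z_\sigma=\alpha(\sigma)g_\sigma^{-d}\) gives \(\cZ_{d+1}(\alpha)\le1\), that is, \(F_{d+1}\le0\).

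Since \(d+1\ge2\), every step from \(m-1\) to \(m\) with \(d+2\le m\le\Delta+1\) has \(m\ge3\), so induction yields \(F_{\Delta+1}\le0\), which is the lemma.

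The main obstacle is the choice of the single-site normalization \(-(m-1)\log g\) in \(U_m\). It must simultaneously satisfy the deletion identity (P2), reproduce the exponent \(\Delta\) at size \(\Delta+1\), and reproduce the tangent-plane exponent \(d\) at size \(d+1\). Once this choice is fixed, the remaining checks are routine.
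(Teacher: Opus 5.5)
Your proposal is correct and follows the paper's proof step for step. You use the same energy with single-site normalization \(-(m-1)\log s_{\omega_j}\) (your \(\bm g\) is the paper's \(\bs\)), the same verification of (P1) and (P2), the same tangent-plane base case at clique size \(d+1\), and the same induction via \(mF_{m-1}\ge(m-2)F_m\). The only difference is cosmetic: you substitute \(z_\sigma=\alpha(\sigma)g_\sigma^{-\Delta}\) directly, where the paper passes through the auxiliary quantity \(M_m(\bs)\) and the identity \(F_m(\bs)=\log M_m(\bs)\).
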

Indeed, assume that \Cref{SDelta-membership} holds. Then, for each \(v\in N(w)\),
\[
    \bx_v \defeq \biggl( \biggl(\frac{\Phi_{d_v}(\bfm_\sigma \odot \blambda^{(v)})}{\Phi_{d_v+1}(\blambda^{(v)})}\biggr)^{\!\Delta} : \sigma\in I \biggr)
    \in \cS_\Delta.
\]
This is the subproblem associated with \(v\) described immediately before \Cref{SDelta-log-convex}.
Since \(\abs{N(w)}=\Delta\), \Cref{SDelta-log-convex} implies
\[
    \log \Biggl(\,\prod_{v\in N(w)} \frac{\Phi_{d_v}(\bfm_\sigma \odot \blambda^{(v)})}{\Phi_{d_v+1}(\blambda^{(v)})} : \sigma\in I \Biggr)
    = \sum_{v\in N(w)} \frac{1}{\Delta} \log \bx_v
    \in \log \cS_\Delta,
\]
which is equivalent to \eqref{eq:goal-Phi-fraction-in-SDelta}. Therefore, it remains to prove \Cref{SDelta-membership}.

\subsection{Resolution of the local membership problem}\label{sec:optimized-pressure}
For \(H=K_3^\circ\), the statement which corresponds to \Cref{SDelta-membership} appears as \cite[Lemma~3.3]{lee2026lower}.
The case \(\Delta=d\) of that statement is the tangent-plane inequality for the normalized clique partition function of clique size \(d+1\), while the case \(\Delta>d\) was proved by model-specific calculations and an iteration through consecutive degrees.
The same tangent-plane argument gives the starting point here.

For \(m\ge1\) and \(\bs,\bx\in(\RRp)^I\), define
\[
    D_{m,\bs}(\bx) \defeq \sum_{\sigma\in I} s_\sigma^{m-1} x_\sigma,
    \quad\text{and}\quad
    M_m(\bs) \defeq \sup_{\bx\in(\RRp)^I} \frac{Z_m(\bx)}{D_{m,\bs}(\bx)^m}.
\]
By definition, \(M_{\Delta+1}(\bs) \le1\) if and only if for all \(\bz\in(\RRp)^I\),
\[
    \Phi_{\Delta+1}(\bz)
    \le D_{\Delta+1,\bs}(\bz)
    = \sum_{\sigma\in I} s_\sigma^{\Delta} z_\sigma.
\]
Equivalently, \((s_\sigma^\Delta:\sigma\in I) \in \cS_\Delta\). 
Let 
\[
    A_{d,\sigma}(\bx) \defeq \frac{\Phi_d(\bfm_\sigma\odot\bx)}{\Phi_{d+1}(\bx)}.
\]
Thus, to prove \Cref{SDelta-membership}, it suffices to show the following:
\[
    \text{for all } \bx\in(\RRp)^I,
    \text{ if } \bs \defeq (A_{d,\sigma}(\bx) : \sigma\in I),
    \text{ then } M_{\Delta+1}(\bs)\le 1.
\]
Let us denote this property by $\mathsf{P}(d,\Delta)$, corresponding to neighbor degree \(d\) and clique size \(\Delta+1\).
The next lemma verifies $\mathsf{P}(m,m)$, which is the promised starting point derived from the tangent-plane argument.

\begin{lemma}\label{antiferromagnetic-Mm-bound}
For \(m\ge1\) and \(\bx\in(\RRp)^I\), let \(\bs\defeq (A_{m,\sigma}(\bx) : \sigma\in I)\). Then \(M_{m+1}(\bs) = 1\).
\end{lemma}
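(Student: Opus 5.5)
We prove the two inequalities \(M_{m+1}(\bs)\le1\) and \(M_{m+1}(\bs)\ge1\) separately. The upper bound comes from the tangent-plane inequality for the concave function \(\Phi_{m+1}\) at the point \(\bx\). The lower bound comes from evaluating the supremum at \(\bz=\bx\).

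\textbf{Computing the tangent plane.} By \Cref{partition-function-derivative},
\[
    \partial_\sigma Z_{m+1}(\bx) = (m+1) Z_m(\bfm_\sigma\odot\bx).
\]
Differentiating \(\Phi_{m+1}=Z_{m+1}^{1/(m+1)}\) therefore gives
\[
    \partial_\sigma\Phi_{m+1}(\bx)
    = \frac{Z_m(\bfm_\sigma\odot\bx)}{Z_{m+1}(\bx)^{m/(m+1)}}
    = \biggl(\frac{\Phi_m(\bfm_\sigma\odot\bx)}{\Phi_{m+1}(\bx)}\biggr)^{\!m}
    = s_\sigma^m,
\]
since \(Z_m(\bfm_\sigma\odot\bx)=\Phi_m(\bfm_\sigma\odot\bx)^m\) and \(Z_{m+1}(\bx)^{m/(m+1)}=\Phi_{m+1}(\bx)^m\). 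Hence \(\nabla\Phi_{m+1}(\bx)=(s_\sigma^m:\sigma\in I)\), and
\[
    D_{m+1,\bs}(\bz)=\sum_{\sigma\in I} s_\sigma^m z_\sigma=\nabla\Phi_{m+1}(\bx)\cdot\bz.
\]

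\textbf{Upper bound.} Since \(H\) is entrywise positive and antiferromagnetic at this point of the argument, \Cref{H-antiferromagnetic-Z-Lorentzian} shows that \(\Phi_{m+1}\) is concave on the positive orthant. It is also positively homogeneous of degree \(1\). By \Cref{euler-homogeneous}, \(\nabla\Phi_{m+1}(\bx)\cdot\bx=\Phi_{m+1}(\bx)\), so the tangent plane at \(\bx\) passes through the origin. Concavity then gives, for every \(\bz\in(\RRp)^I\),
\[
    \Phi_{m+1}(\bz)
    \le \Phi_{m+1}(\bx)+\nabla\Phi_{m+1}(\bx)\cdot(\bz-\bx)
    = \nabla\Phi_{m+1}(\bx)\cdot\bz
    = D_{m+1,\bs}(\bz).
\]
Raising both sides to the power \(m+1\) yields \(Z_{m+1}(\bz)\le D_{m+1,\bs}(\bz)^{m+1}\) for all \(\bz\), that is, \(M_{m+1}(\bs)\le1\).

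\textbf{Lower bound.} Taking \(\bz=\bx\) in the supremum defining \(M_{m+1}(\bs)\), Euler's identity gives \(D_{m+1,\bs}(\bx)=\Phi_{m+1}(\bx)\). So the ratio equals \(Z_{m+1}(\bx)/\Phi_{m+1}(\bx)^{m+1}=1\), and \(M_{m+1}(\bs)\ge1\).

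There is no real obstacle here. The only care needed is to confirm that the hypotheses of \Cref{H-antiferromagnetic-Z-Lorentzian} hold (entrywise positivity of \(H\) has already been arranged in this section) and to track the exponents when computing \(\nabla\Phi_{m+1}\).
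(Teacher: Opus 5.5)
Your proposal is correct and follows the paper's proof essentially verbatim. You compute \(\nabla\Phi_{m+1}(\bx)=(s_\sigma^m:\sigma\in I)\) via \Cref{partition-function-derivative}, combine concavity from \Cref{H-antiferromagnetic-Z-Lorentzian} with Euler's identity to get the tangent-plane bound \(\Phi_{m+1}\le D_{m+1,\bs}\), and take \(\bz=\bx\) for equality.
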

\begin{proof}
For \(\sigma\in I\), \eqref{eq:partition-function-derivative} gives
\begin{align*}
    \partial_\sigma \Phi_{m+1}(\bx)
    &= \frac{1}{m+1} Z_{m+1}(\bx)^{-m/(m+1)} \cdot \partial_\sigma Z_{m+1}(\bx)
    \\&= \frac{1}{m+1} Z_{m+1}(\bx)^{-m/(m+1)} \cdot (m+1) Z_m(\bfm_\sigma\odot\bx)
    \\&= \biggl( \frac{\Phi_m(\bfm_\sigma\odot\bx)}{\Phi_{m+1}(\bx)} \biggr)^{\!m}
    = s_\sigma^m.
\end{align*}
By \Cref{H-antiferromagnetic-Z-Lorentzian}, \(\Phi_{m+1}\) is concave on the positive orthant, so for all \(\bz\in(\RRp)^I\),
\begin{equation}\label{eq:Phi-concave-compare-D}
    \Phi_{m+1}(\bz)
    \le \Phi_{m+1}(\bx) + \sum_{\sigma\in I} \partial_\sigma \Phi_{m+1}(\bx) (z_\sigma-x_\sigma)
    = \sum_{\sigma\in I} \partial_\sigma \Phi_{m+1}(\bx) z_\sigma
    = \sum_{\sigma\in I} s_\sigma^m z_\sigma
    = D_{m+1,\bs}(\bz).
\end{equation}
Here, the first equality uses \(\sum_{\sigma\in I} \partial_\sigma \Phi_{m+1}(\bx) x_\sigma = \Phi_{m+1}(\bx)\), which follows from \Cref{euler-homogeneous}, since \(\Phi_{m+1}\) is homogeneous of degree 1. Therefore, \(M_{m+1}(\bs) \le 1\). Taking \(\bz=\bx\) gives equality in \eqref{eq:Phi-concave-compare-D}, so \(M_{m+1}(\bs)=1\).
\end{proof}

It remains to prove that $\mathsf{P}(d,d')$ implies $\mathsf{P}(d,d'+1)$ for $1\le d\le d' < \Delta$.
The key idea is to realize \(\log M_m(\bs)\) as the optimized pressure $F_m(\bs)$ of the abstract system introduced in \Cref{sec:abstract-optimized-pressure}, thereby allowing us to apply \Cref{abstract-pressure-monotonicity}.
The resulting monotonicity property of \(M_m(\bs)\) proves the desired implication from \(\mathsf{P}(d,d')\) to \(\mathsf{P}(d,d'+1)\).

Fix \(\bs\in(\RRp)^I\). In the abstract setting of \Cref{sec:abstract-optimized-pressure}, let the set of letters be \(A\defeq I\), let \(\Omega_m\defeq I^m\), and define the energy function by
\begin{align}\label{eq:energy}
    U_{m,\bs}(\bomega) \defeq \sum_{1\le i<j\le m} \log H(\omega_i,\omega_j) - (m-1) \sum_{j=1}^m \log s_{\omega_j},
\end{align}
which is well-defined because \(H\) and \(\bs\) are entrywise positive. Clearly, each \(\Omega_m\) is nonempty, \((\Omega_m : m\ge1)\) is permutation-invariant and deletion-compatible, and \(U_{m,\bs}\) is permutation-invariant. The corresponding partition function and the optimized pressure are denoted by
\begin{align}\label{eq:partition_and_pressure}
    \cZ_{m,\bs}(\alpha) \defeq \sum_{\bomega\in\Omega_m} \exp(U_{m,\bs}(\bomega)) \prod_{j=1}^m \alpha(\omega_j)
    \quad\text{and}\quad
    F_m(\bs) \defeq \sup_{\alpha\in\cP^+(A)} \log \cZ_{m,\bs}(\alpha),
\end{align}
respectively.
The pair-interaction terms in \(U_{m,\bs}(\bomega)\), which form the first sum in the definition, reproduce the weight of a spin assignment on \(K_m\).
The \(\bs\)-dependent terms introduce the normalization factors \(s_\sigma^{-(m-1)}\), chosen so that
\[
    \cZ_{m,\bs}(\alpha) = Z_m(\alpha(\sigma) s_\sigma^{-(m-1)} : \sigma\in I),
\]
thereby allowing us to compare \(F_m(\bs)\) with \(M_m(\bs)\).
This comparison is given in \Cref{F-eq-log-M}.
The coefficient \(m-1\) will also ensure that the family of energies satisfies the deletion identity \ref{cond:energy-deletion}.

\begin{lemma}\label{F-eq-log-M}
For \(m\ge1\) and \(\bs\in(\RRp)^I\), \(F_m(\bs) = \log M_m(\bs)\).
\end{lemma}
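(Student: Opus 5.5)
The plan is to show that both quantities are suprema of the same function, reparametrized by a change of variables. First compute \(\cZ_{m,\bs}(\alpha)\) explicitly. Exponentiating \eqref{eq:energy} gives
\[
    \exp(U_{m,\bs}(\bomega))\prod_{j=1}^m\alpha(\omega_j)
    = \prod_{1\le i<j\le m} H(\omega_i,\omega_j)\prod_{j=1}^m \alpha(\omega_j)\,s_{\omega_j}^{-(m-1)}.
\]
Summing over \(\bomega\in I^m\) then yields \(\cZ_{m,\bs}(\alpha)=Z_m(\by)\) with \(y_\sigma\defeq\alpha(\sigma)s_\sigma^{-(m-1)}\), as already noted before the statement.

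Next, I would relate the constraint \(\alpha\in\cP^+(A)\) to the denominator in \(M_m(\bs)\). For \(\by\) defined as above,
\[
    D_{m,\bs}(\by)=\sum_{\sigma}s_\sigma^{m-1}y_\sigma=\sum_\sigma\alpha(\sigma)=1.
\]
Hence \(\cZ_{m,\bs}(\alpha)=Z_m(\by)/D_{m,\bs}(\by)^m\), and so \(F_m(\bs)\le\log M_m(\bs)\). For the converse, take any \(\bx\in(\RRp)^I\). Both \(Z_m\) and \(D_{m,\bs}^m\) are homogeneous of degree \(m\), so the ratio is invariant under scaling \(\bx\mapsto t\bx\). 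Normalize so that \(D_{m,\bs}(\bx)=1\) and set \(\alpha(\sigma)\defeq s_\sigma^{m-1}x_\sigma\). This \(\alpha\) lies in \(\cP^+(A)\) because \(\bs\) and \(\bx\) are positive and the entries sum to \(D_{m,\bs}(\bx)=1\). Moreover \(\cZ_{m,\bs}(\alpha)=Z_m(\bx)=Z_m(\bx)/D_{m,\bs}(\bx)^m\). The map \(\alpha\mapsto\by\) is therefore a bijection from \(\cP^+(A)\) onto the slice \(\{D_{m,\bs}=1\}\) of the positive orthant, and the two suprema coincide. Taking logarithms, which is monotone and valid since \(Z_m>0\) for an entrywise-positive \(H\), gives \(F_m(\bs)=\log M_m(\bs)\). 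The supremum also commutes with \(\log\), including the case where the value is \(+\infty\), although it is finite here.

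There is no real obstacle. The only points needing care are that the homogeneity degrees match, so the ratio is scale-invariant, and that the positivity of \(\bs\) makes the correspondence bijective onto positive laws. The case \(m=1\) is included, since then \(s_\sigma^{0}=1\) and both sides equal \(\log\sup_\alpha\sum_\sigma\alpha(\sigma)=0\).
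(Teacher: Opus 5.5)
Your proposal is correct and follows the paper's proof essentially step for step. You identify \(\cZ_{m,\bs}(\alpha)\) with \(Z_m(\alpha(\sigma)s_\sigma^{-(m-1)}:\sigma\in I)\), use the bijection between \(\cP^+(A)\) and the slice \(\{D_{m,\bs}=1\}\) of the positive orthant, and then use degree-\(m\) homogeneity of \(Z_m\) and \(D_{m,\bs}^m\) to drop the normalization.
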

\begin{proof}
Fix \(m\ge1\) and \(\bs\in(\RRp)^I\). For \(\bx\in(\RRp)^I\), define \(\alpha_{\bx}\in\cP^+(A)\) by
\[
    \alpha_{\bx}(\sigma) \defeq \frac{s_\sigma^{m-1} x_\sigma}{D_{m,\bs}(\bx)},
    \qquad \sigma\in I.
\]
The map \(\bx\mapsto\alpha_{\bx}\) is a bijection from \(\{\bx\in(\RRp)^I : D_{m,\bs}(\bx)=1\}\) to \(\cP^+(A)\) with the inverse \(\alpha\mapsto(\alpha(\sigma) s_\sigma^{-(m-1)} : \sigma\in I)\).
Suppose \(D_{m,\bs}(\bx)=1\) and set \(\alpha=\alpha_{\bx}\). Then by definition,
\begin{equation}\label{eq:cZ-and-Z}
    \cZ_{m,\bs}(\alpha)
    = Z_m(\alpha(\sigma) s_\sigma^{-(m-1)} : \sigma\in I)
    = Z_m(\bx).
\end{equation}
To elaborate, summing
\[
    \exp(U_{m,\bs}(\bomega)) \prod_{j=1}^m \alpha(\omega_j) 
    = \prod_{1\le i<j\le m} H(\omega_i,\omega_j) \prod_{j=1}^m (\alpha(\omega_j)  s_{\omega_j}^{-(m-1)})
\]
over $\bomega=(\omega_j:j\in [m])\in I^m$ proves~\eqref{eq:cZ-and-Z}.
Using \eqref{eq:cZ-and-Z} together with the bijection \(\bx\mapsto\alpha_{\bx}\) gives
\[
    F_m(\bs)
    = \sup_{\alpha\in\cP^+(A)} \log \cZ_{m,\bs}(\alpha)
    = \sup_{\bx\in(\RRp)^I : D_{m,\bs}(\bx)=1} \log Z_m(\bx).
\]
Since both \(Z_m(\bx)\) and \(D_{m,\bs}(\bx)^m\) are homogeneous of degree \(m\),
\[
    \sup_{\bx\in(\RRp)^I : D_{m,\bs}(\bx)=1} \log Z_m(\bx)
    = \sup_{\bx\in(\RRp)^I} \log \biggl( \frac{Z_m(\bx)}{D_{m,\bs}(\bx)^m} \biggr)
    = \log M_m(\bs).
\]
Therefore, \(F_m(\bs) = \log M_m(\bs)\).
\end{proof}

Next, we show that the current setting satisfies the required properties, thereby allowing us to apply \Cref{abstract-pressure-monotonicity}.

\begin{lemma}\label{antiferromagnetic-abstract-conditions}
For every fixed \(\bs\in(\RRp)^I\), the partition function $\cZ_{m,\bs}(\alpha)$ in~\eqref{eq:partition_and_pressure} and the energy function in~\eqref{eq:energy} satisfy \ref{cond:mu-alpha-in-cN} and \ref{cond:energy-deletion}.
\end{lemma}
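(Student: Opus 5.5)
We verify the two properties separately. Throughout, \(\bs\in(\RRp)^I\) is fixed, \(H\) is entrywise positive antiferromagnetic, and we work with \(\Omega_m=I^m\).

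For \ref{cond:energy-deletion}, the plan is a direct counting argument on the two sums in \eqref{eq:energy}. Fix \(m\ge2\) and \(\bomega\in I^m\), and sum \(U_{m-1,\bs}(\bomega_{[m]\setminus j})\) over \(j\in[m]\).

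First, each pair \(\{i,k\}\) with \(i<k\) survives the deletion of \(j\) exactly when \(j\notin\{i,k\}\). This happens for \(m-2\) values of \(j\), so the pair-interaction terms contribute
\[
    (m-2)\sum_{i<k}\log H(\omega_i,\omega_k).
\]

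Second, each site \(i\) survives for \(m-1\) values of \(j\). In \(U_{m-1,\bs}\) it carries the coefficient \(-(m-2)\), so the field terms contribute
\[
    -(m-1)(m-2)\sum_i\log s_{\omega_i}.
\]
The two contributions together equal \((m-2)U_{m,\bs}(\bomega)\), as required. This explains the choice of coefficient \(m-1\) in the energy.

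For \ref{cond:mu-alpha-in-cN}, exchangeability of \(\mu^\alpha_m\) and the support condition are automatic, so the only task is to show that the count-generating polynomial \(h_{\mu^\alpha_m}\) is Lorentzian. The key step is to compute it in terms of \(Z_m\). Set \(\by\defeq(\alpha(\sigma)s_\sigma^{-(m-1)}:\sigma\in I)\in(\RRp)^I\). Then
\[
    h_{\mu^\alpha_m}(\bx)
    =\frac{1}{\cZ_{m,\bs}(\alpha)}\sum_{\bomega\in I^m}\prod_{i<k}H(\omega_i,\omega_k)\prod_{j}y_{\omega_j}x_{\omega_j}
    =\frac{Z_m(\by\odot\bx)}{Z_m(\by)}.
\]
By \Cref{H-antiferromagnetic-Z-Lorentzian}, \(Z_m\) is Lorentzian. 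Since \(\by\) is entrywise positive, \Cref{Lorentzian-closure}(2) shows that \(Z_m(\by\odot\bx)\) is Lorentzian. Scaling by the positive constant \(1/Z_m(\by)\), using \Cref{Lorentzian-closure}(1), keeps it Lorentzian.

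Two edge cases remain. When \(m=1\), \(h\) is linear with positive coefficients and hence Lorentzian directly. When \(m\ge2\), the lemma as stated already covers the case. Therefore \(\mu^\alpha_m\in\cN_m\).

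The main obstacle is only bookkeeping. One must check that the identity \(\exp(U_{m,\bs}(\bomega))\prod_j\alpha(\omega_j)=\prod_{i<k}H(\omega_i,\omega_k)\prod_j y_{\omega_j}\) holds (already noted in the proof of \Cref{F-eq-log-M}), and that \(T(\bomega)\) monomials match \(\prod_j x_{\omega_j}\). Both are immediate from the definitions, so no genuine difficulty is expected.
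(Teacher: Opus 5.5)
Your proposal is correct and follows the paper's proof essentially verbatim. For \ref{cond:energy-deletion}, you count that each pair survives \(m-2\) deletions and each site survives \(m-1\) deletions with coefficient \(-(m-2)\); for \ref{cond:mu-alpha-in-cN}, you identify the count-generating polynomial as \(Z_m(\by\odot\bx)/Z_m(\by)\) with \(\by=(\alpha(\sigma)s_\sigma^{-(m-1)}:\sigma\in I)\) and then apply \Cref{H-antiferromagnetic-Z-Lorentzian} and \Cref{Lorentzian-closure}. Your separate treatment of \(m=1\) is harmless but unnecessary, since \Cref{H-antiferromagnetic-Z-Lorentzian} already covers every \(d\ge1\).
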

\begin{proof}
We first verify \ref{cond:mu-alpha-in-cN}. Fix \(m\ge1\) and \(\alpha\in\cP^+(A)\). Recall that \(\mu_m^\alpha\), defined by
\[
    \mu_m^\alpha(\bomega) \defeq \frac{1}{\cZ_{m,\bs}(\alpha)} \exp(U_{m,\bs}(\bomega)) \prod_{j=1}^m \alpha(\omega_j),
\]
is an exchangeable law on \(\Omega_m\). For simplicity, write \(\mu\defeq\mu_m^\alpha\). We claim that the count-generating polynomial of \(\mu\) is
\begin{equation}\label{eq:antiferromagnetic-P1-count-generating-poly}
    h_\mu(\bz) = \frac{Z_m(\alpha(\sigma) s_\sigma^{-(m-1)} z_\sigma : \sigma\in I)}{\cZ_{m,\bs}(\alpha)}.
\end{equation}
Indeed, by definition,
\[
    h_\mu(\bz)
    = \sum_{\bomega\in\Omega_m} \mu(\bomega) \bz^{T(\bomega)},
    \qquad
    \bz^{T(\bomega)} = \prod_{j=1}^m z_{\omega_j}.
\]
For each \(\bomega\in\Omega_m\), \(\cZ_{m,\bs}(\alpha)\mu(\bomega)\bz^{T(\bomega)}\) is equal to
\[
    \exp(U_{m,\bs}(\bomega)) \prod_{j=1}^m (\alpha(\omega_j) z_{\omega_j})
    = \prod_{1\le i<j\le m} H(\omega_i,\omega_j) \prod_{j=1}^m (\alpha(\omega_j) z_{\omega_j} s_{\omega_j}^{-(m-1)}).
\]
Summing over \(\bomega\in\Omega_m\) gives exactly
\[
    Z_m(\alpha(\sigma) s_\sigma^{-(m-1)} z_\sigma : \sigma\in I),
\]
which proves \eqref{eq:antiferromagnetic-P1-count-generating-poly}. By \Cref{H-antiferromagnetic-Z-Lorentzian}, \(Z_m\) is Lorentzian, and \Cref{Lorentzian-closure} shows that \(h_\mu\) is also Lorentzian. Therefore, \(\mu\in\cN_m\).

Next, we verify \ref{cond:energy-deletion}. Fix \(m\ge2\) and \(\bomega\in\Omega_m\). The energy \(U_{m,\bs}\) is a sum of pair-interaction terms and linear terms. Pair-interaction terms satisfy
\[
    \sum_{k=1}^m \sum_{\substack{1\le i<j\le m \\ i,j\neq k}} \log H(\omega_i,\omega_j)
    = (m-2) \sum_{1\le i<j\le m} \log H(\omega_i,\omega_j),
\]
because each pair \((i,j)\) survives exactly \(m-2\) of the \(m\) single-site deletions. Linear terms satisfy
\[
    \sum_{k=1}^m \biggl( -(m-2) \sum_{i\in [m]\setminus \{k\}} \log s_{\omega_i} \biggr)
    = -(m-2)(m-1) \sum_{i=1}^m \log s_{\omega_i},
\]
because each site \(i\) survives exactly \(m-1\) deletions. Combining these gives
\[
    \sum_{k=1}^m U_{m-1,\bs}(\bomega_{[m]\setminus k}) = (m-2) U_{m,\bs}(\bomega),
\]
which is exactly \ref{cond:energy-deletion} for the energy function $U_{m,\bs}$.
\end{proof}

\Cref{antiferromagnetic-abstract-conditions,abstract-pressure-monotonicity} imply that, for \(m\ge3\) and \(\bs\in(\RRp)^I\),
\begin{equation}\label{eq:antiferromagnetic-pressure-monotonicity}
    m F_{m-1}(\bs) \ge (m-2) F_m(\bs).
\end{equation}
Finally, we prove \Cref{SDelta-membership}.

\begin{proof}[Proof of \Cref{SDelta-membership}]
Let \(\blambda\in(\RRp)^I\) and \(\bs\defeq (A_{d,\sigma}(\blambda) : \sigma\in I)\).
\Cref{antiferromagnetic-Mm-bound} implies \(M_{d+1}(\bs) = 1\), which gives \(\mathsf{P}(d,d)\).
This immediately gives \(\mathsf{P}(d,\Delta)\) if \(\Delta=d\), so assume \(1\le d<\Delta\).
Suppose \(1\le d\le d'<\Delta\) and \(\mathsf{P}(d,d')\) holds.
By \Cref{F-eq-log-M}, \(F_{d'+1}(\bs) = \log M_{d'+1}(\bs) \le 0\), so \eqref{eq:antiferromagnetic-pressure-monotonicity} with \(m=d'+2\) implies \(F_{d'+2}(\bs)\le 0\), where we use the fact that the coefficient \(m-2=d'\) is nonzero.
Applying \Cref{F-eq-log-M} again gives \(M_{d'+2}(\bs) \le 1\), hence \(\mathsf{P}(d,d'+1)\) holds.
Therefore, \(\mathsf{P}(d,\Delta)\) holds, which completes the proof.
\end{proof}

By the reduction in \Cref{sec:membership}, \Cref{SDelta-membership} implies \eqref{eq:goal-Phi-fraction} and therefore completes the induction step. This proves \Cref{antiferromagnetic-inhomo-clique-minimizing}.

\section{Concluding remarks}\label{sec:concluding-remark}

A natural next problem is to determine the precise scope of the clique-minimizing phenomenon.

\begin{problem}
Characterize all clique-minimizing models.
\end{problem}

As mentioned in the introduction, the class of clique-minimizing models strictly contains the class of antiferromagnetic models. Let \(H_1\) and \(H_2\) be antiferromagnetic models on spin sets \(I_1\) and \(I_2\), respectively, and let \(M\defeq H_1\otimes H_2\), where \(\otimes\) denotes the tensor product. That is, \(M\) has the spin set \(I_1\times I_2\) and
\[
    M((\sigma_1,\sigma_2), (\tau_1,\tau_2)) \defeq H_1(\sigma_1,\tau_1) H_2(\sigma_2,\tau_2),
    \qquad \sigma_1,\tau_1\in I_1,\ \sigma_2,\tau_2\in I_2.
\]
The eigenvalues of \(M\) are \(\{\eta_i\rho_j : i\in I_1,\, j\in I_2\}\), where \(\{\eta_i : i\in I_1\}\) and \(\{\rho_j : j\in I_2\}\) are the multisets of eigenvalues of \(H_1\) and \(H_2\), respectively. Thus, if \(H_1\) and \(H_2\) have negative eigenvalues, then \(M\) is not antiferromagnetic.
On the other hand, for every graph \(G\),
\begin{align*}
    \hom(G,M) = \hom(G,H_1) \hom(G,H_2)
    &\ge \prod_{v\in V(G)} \hom(K_{d_v+1},H_1)^{1/(d_v+1)}
    \prod_{v\in V(G)} \hom(K_{d_v+1},H_2)^{1/(d_v+1)}
    \\&= \prod_{v\in V(G)} \hom(K_{d_v+1},M)^{1/(d_v+1)},
\end{align*}
so \(M\) is clique-minimizing. For example, \(K_2^\circ \otimes K_2\) is clique-minimizing but not antiferromagnetic.

Another way to construct examples is to consider models \(H\) such that \(\hom(K_r,H)=0\) for some \(r\ge3\). In this case, it suffices to verify \eqref{eq:clique-minimizing} for every \(G\) with maximum degree at most \(r-2\). Indeed, if \(d_v\ge r-1\) for some \(v\in V(G)\), then \(\hom(K_{d_v+1},H)=0\), so the right-hand side of \eqref{eq:clique-minimizing} vanishes. Both sides of \eqref{eq:clique-minimizing} factor over connected components of \(G\), so one may restrict further to connected \(G\).
When \(r=3\), every connected graph $G$ with maximum degree at most \(1\) is \(K_1\) or \(K_2\), and equality holds in both cases, so \eqref{eq:clique-minimizing} holds for any bipartite graph $H$, not necessarily antiferromagnetic.
When \(r=4\), it remains to check connected graphs \(G\) with \(\Delta(G)\le2\), which are paths and cycles. One may check that the model \(H\) given by a triangle with one pendant edge is clique-minimizing but not antiferromagnetic.

\medskip

It would also be interesting to understand equality and stability in the vertex-inhomogeneous inequality \eqref{eq:inhomo-clique-minimizing}.
As mentioned, disjoint unions of cliques with the fugacity vectors satisfying \(\blambda^{(u)} = \blambda^{(v)}\) for each \(uv\in E(G)\) attain equality. For a fixed antiferromagnetic model, one may ask whether these are the only extremizers under suitable nondegeneracy assumptions and whether near equality forces the source graph $G$ to be close to a disjoint union of cliques.

We note that it was conjectured in \cite[Conjecture~7.1]{lee2025counting} that, for a connected graph \(G\), the vertex-homogeneous partition function \(Z_G\) of \(H\) is Lorentzian for every antiferromagnetic model \(H\) if and only if \(G\) is a clique. Thus, this conjecture predicts that the Lorentzian property established in \Cref{H-antiferromagnetic-Z-Lorentzian} characterizes cliques among connected source graphs.

\paragraph{Acknowledgments}
This research was partly carried out during the second author's visit to Caltech, hosted by David Conlon and supported by the BK21 FOUR Support Program for Outstanding Graduate Students' International Joint Training. The second author would like to thank them for their hospitality and support.

\paragraph{Use of large language models}
Given the rapid development of AI and its growing application in mathematical research, we believe that transparent disclosure of AI use should be standard, as it would help reduce the temporary confusion surrounding the evaluation of human contributions, AI-assisted work, and the broader possibilities created by collaboration between researchers and AI systems.

Mathematically, we used large language models (LLMs), particularly GPT-5.5, as interactive tools during the development of the proof of \Cref{antiferromagnetic-inhomo-clique-minimizing}. 
Our use of these tools in the discovery process proceeded in three stages.
First, while we were studying the antiferromagnetic Ising case, i.e., the Davies--LeBlanc conjecture, GPT-5.5 Pro Extended produced a solution of a reduced form of the local membership problem.
This solution invoked, without proof, a version of the relative entropy contraction.
It then also produced a proof of the claimed entropy inequality, though we made extra queries to relate the argument to~\cite{anari2021logconcave,anari2022entropic}.
Second, we iteratively used GPT-5.5 Thinking Extended to investigate technical details and possible generalizations, leading first to proofs for antiferromagnetic 2-spin models and for \(K_q^\circ\) with the loop weight restricted to a certain range.
Comparing the resulting model-specific arguments helped us isolate their common structure and formulate the entropy inequalities and the argument based on optimized pressure in an abstract setting.
These arguments initially required the antiferromagnetic model to have unit off-diagonal entries or at most two spins.
Finally, with the assistance of GPT-5.5 Pro Extended, we extended this strategy to general antiferromagnetic models in a manner closely paralleling the earlier argument for models with unit off-diagonal entries.

We also used GPT-5.6 Sol Pro and Codex to polish the writing, improve the exposition, and support literature review, with all suggestions reviewed and refined by the authors.
The authors take full responsibility for the correctness of the arguments, citations, and the exposition of the paper.

\printbibliography

\end{document}